\title{multipeak solutions for the Yamabe equation}
\date{}
\theoremstyle{plain}
\newtheorem{thm}{Theorem}[section]
\newtheorem{prop}[thm]{Proposition}
\newtheorem{lem}[thm]{Lemma}
\theoremstyle{definition}
\theoremstyle{remark}
\newtheorem*{dem}{Proof}
\newtheorem*{demTeo}{Proof of Theorem $\ref{main_thm}$}
\newtheorem*{demProp3}{Proof of Proposition $\ref{prop3}$}
\newcommand{\al}{{\alpha}}
\newcommand{\ep}{{\epsilon}}
\newcommand{\io}{{\iota}}
\def\V{\mbox{Var}}
\def\R\re
\def\V{\bf V}
\def \re{{\mathbb R}}
\def \C{{\mathbb C}}
\def \0{\lambda_{0}}
\def \Sp{{\mathbb S}}
\begin{document}

\author{Carolina A. Rey}
 \address{Departamento de Matem\'atica\\
 	 Universidad de Buenos Aires \\
Ciudad Universitaria, Pabell\'on I, (C1428EGA)\\
 Buenos Aires, Argentina.}
\email{carey@dm.uba.ar}

\author[J. M. Ruiz]{Juan Miguel Ruiz}
 \address{ENES UNAM \\
           37684 \\
          Le\'on. Gto. \\
          M\'exico.}
\email{mruiz@enes.unam.mx}

\begin{abstract}
Let $(M,g)$ be a closed Riemannian manifold of dimension $n\geq 3$ and $x_0 \in M$ be an isolated local minimum of the scalar curvature $s_g$ of $g$.
For any positive integer $k$ we prove that for $\epsilon >0$ small  enough the subcritical Yamabe equation 
$-\epsilon^2 \Delta u  +(1+ c_{N} \  \epsilon^2   s_g  ) u
= u^q$ has a positive $k$-peaks solution which concentrate around $x_0$, assuming that a constant $\beta$ is non-zero. In the equation $c_N = \frac{N-2}{4(N-1)}$ for an integer $N>n$ and
$q= \frac{N+2}{N-2}$. The constant $\beta$ depends on $n$ and $N$, and can be easily computed numerically, being 
negative in all cases considered. This provides solutions to the Yamabe equation on  Riemannian products $(M\times X , g+ \epsilon^2 h )$, where $(X,h)$ is a Riemannian manifold with constant positive scalar curvature.  
We also prove that solutions with small energy only have one local maximum.

\end{abstract}

\maketitle

\section{Introduction}

Consider a closed Riemannian manifold $(M,g)$ of dimension $n\geq 3$. The Yamabe problem consists of finding metrics of constant scalar curvature in
the conformal class of $g$, $[g]$. If we denote by $s_g$ the scalar curvature of $g$ and call $a_n = \frac{4(n-1)}{n-2}$, $p_n = \frac{2n}{n-2}$ 
(the critical Sobolev exponent) then for a positive function $u: M \rightarrow \re$ the metric $u^{p_n -2} g \in [g]$ has constant scalar curvature $\lambda \in \re$
if and only if $u$ is a solution to the {\it Yamabe equation}

\begin{equation}
-a_n \Delta u + s_g u = \lambda u^{p_n -1}
\end{equation}

A fundamental result proved by H. Yamabe \cite{Yamabe}, N. Trudinger \cite{Trudinger2}, T. Aubin \cite{Aubin} and R. Schoen \cite{Schoen}
says that there is always one solution which minimizes energy (which means that
the corresponding metric minimizes the total scalar curvature functional in its conformal class). But in general the solution is not unique and many
interesting results have been proved about multiplicity of solutions (see for instance references \cite{Brendle}, \cite{BP1}, \cite{BP2},
\cite{Piccione}, \cite{Henry}, \cite{Petean},
\cite{Pollack}, \cite{Schoen2}). 

Let $(M^n,g)$ be any closed Riemannian manifold and $(X^m,h)$ a Riemannian manifold of constant positive scalar curvature $s_h$. Let $N=n+m$.
We will be interested in positive solutions of the Yamabe equation for the
product manifold  $(M\times X, g+\epsilon^2 h)$:

\begin{equation}
\label{originalYamabe}
-a_{N} (\Delta_g + \Delta_{\epsilon^2 h})u+(s_g+\epsilon^{-2}s_h)u=u^{p_{N}-1}.
\end{equation}

\noindent The conformal metric $u^{p_{N}-2} ( g + \epsilon^2 h)$ then has constant scalar curvature. This case of Riemannian products has
recently been studied by several authors (see for instance \cite{Piccione}, \cite{Henry}, \cite{Petean2} and the references in them).

We restrict our study to functions that depend only on the first factor, $u:M\rightarrow \re$. We normalize  $h$ so that $s_h=a_{N}$, and let $c_N = a_N^{-1}$. Then  $u$ solves the Yamabe equation  if and only if (after renormalizing)
\begin{equation}
\label{Yamabe}
-\epsilon^{2} \Delta_gu+\left( c_N s_g \epsilon^{2}+1\right)u= u^{p_{N}-1}.
\end{equation}

We will find solutions to the Yamabe equation (\ref{originalYamabe}) by solving (\ref{Yamabe}). These solutions actually give solutions of the
Yamabe equation in more general situations, like the case when  $M$ is the base space of a harmonic Riemannian submersion treated in
\cite{BP1}, \cite{BP2}, \cite{Otoba}.

It is important to point out that positive  solutions of (\ref{Yamabe}) are the critical points of  the functional  $J_{\epsilon}:H^{1}(M)\rightarrow \re$, given by

\begin{equation}
J_{\epsilon}(u)= \epsilon^{-n} \int_{M} \left( \frac{1}{2} \epsilon^2 |\nabla u|^2 + \frac{1}{2} \left(\epsilon^2 s_g {c_{N}} +1\right)u^2-
\frac{1}{p_{N}}(u^+)^{p_{N}}\right)  d\mu_g,
\end{equation}

\noindent where $u^+(x)=\max \{u(x),0\}$.

We will build solutions
which have several peaks by using the Lyapunov-Schmidt reduction procedure which has been applied by several authors \cite{Dancer},
\cite{DKM}, \cite{Floer}, \cite{Li}, \cite{Micheletti}. In particular in \cite{Dancer} E. N.  Dancer, A. M. Micheletti, and A. Pistoia
apply  the procedure in a Riemannian n-manifold to build $k$-peaks solutions of the equation  

\begin{equation}
-\epsilon^{2} \Delta_gu+   u= u^{p-1},
\end{equation}

\noindent
for $p<p_n$, with the peaks approaching an isolated local minimum of the scalar curvature of the metric.
 We will apply similar techniques to study equation (\ref{Yamabe}).

\vspace{.5cm}

We will now briefly describe the construction. One first considers what will be called the limit equation in $\re^n$: recall that 
for $2<p<\frac{2n}{n-2}$,  $n>2$, the equation

\begin{equation}
\label{ED}
 -\Delta U + U=U^{p-1} \hbox{ in } \mathbb{R}^n
\end{equation}
has a unique (up to translations) positive solution  $U \in H^1(\mathbb{R}^n)$ that vanishes at infinity. Such function is radial and 
exponentially decreasing at infinity, namely 
\begin{equation}
\label{ED1}
 \lim_{|x|\to \infty} U(x)|x|^{\frac{n-1}{2}}e^{|x|}=c>0
\end{equation}
\begin{equation}
\label{ED2}
  \lim_{|x|\to \infty} | \nabla U (x) |  \ |x|^{\frac{n-1}{2}}e^{|x|}= c.
\end{equation}

\noindent
See reference \cite{Kwong} for details.  We will denote this solution by $U$ in the article, assuming that $p$ and 
$n$ are clear from the context.

Note that for any $\epsilon>0$,  the function $U_{\epsilon}(x)=U(\frac{x}{\epsilon})$, is a solution of 
\begin{equation}
-\epsilon^2 \Delta U_{\epsilon}+ U_{\epsilon}=U_{\epsilon}^{p-1}.
\end{equation}

For any $x \in M$ consider the exponential map $\exp_x : T_x M \to M$. Since $M$ is closed we can fix $r_0  > 0$ 
such that $\exp_x\big|_{B(0,r_0 )}: B(0, r_0  ) \to B_g (x, r_0 )$ is a diffeomorphism for any $x \in M$.
Here $B(0, r )$ is the ball in $\mathbb{R}^n$ centered at $0$ with radius $r$ and $B_g(x,r)$ is the 
geodesic ball in $M$ 
centered at $x$ with radius $r$.

Let $\chi_r$ be a smooth radial cut-off function such that $\chi_r(z)=1$ if $z \in B(0, r/2)$, $\chi_r (z) = 0$ if
$z \in \mathbb{R}^n \textbackslash B(0, r)$.

Fix any positive $r <r_0 $.  For a point $\xi \in M$ and $\ep > 0$ let us define the function  $W_{\ep,\xi}  :M \rightarrow 
\mathbb{R}$  by 
\begin{equation}
 W_{\ep,\xi}(x)= \left\{ 
	     \begin{array}{lcc}
             U_\ep(\exp_{\xi}^{-1}(x))\chi_r(\exp_{\xi}^{-1}(x)) &   if  &x\in B_g(\xi,r)  \\
             0 &   & otherwise \\
             \end{array}
   \right.
\end{equation}

One considers $ W_{\ep,\xi}$ as an approximate solution to equation (\ref{Yamabe}) which concentrates around $\xi$. 
As $\ep \rightarrow 0$ $ W_{\ep,\xi}$ will get more concentrated around $\xi$ and will
be closer to an exact solution. 
Summing up a finite number $k$  of these functions
concentrating on different points we have an approximate solution of equation (\ref{Yamabe}) which has $k$-peaks:
let $k_0\geq 1$ be a fixed integer and
denote $\overline{\xi}=(\xi_{1}, \dots,\xi_{k_0}) \in M^{k_0}$.

Then
\begin{equation}
\ V_{\epsilon,\overline{\xi}}:=\sum_{i=1}^{k_0}W_{\epsilon,\xi_{i}}.
\end{equation}

\noindent
is our approximate solution with $k_0$-peaks. We will find exact solutions by perturbing these approximate solutions.

Let $p=p_N$, ${\bf c}=c_{N}$, and 

\begin{equation}\label{beta}
\beta:=  {\bf c} \displaystyle\int_{\mathbb{R}^n} U^2(z)  \ dz  -  \frac{1}{n(n+2)}\displaystyle\int_{\mathbb{R}^n}  | \nabla U (z) |^2  | z |^2 \ dz.
\end{equation}

Note that the constant $\beta$ depends only on  $n$ and $m$.  We have not been able to find an analytical proof that $\beta \neq 0$ but at the end
of the article we give the numerical computation of $\beta$ for low values of $m$ and $n$. In all cases $\beta <0$.

Assuming that 
$\beta < 0$ we will show that for small $\epsilon$ and any isolated local minimum $x_0$ of $s_g$ there exists a  solution  of problem $(\ref{Yamabe})$ 
which is close to 
$V_{\epsilon,\overline{\xi}}$ in the norm $\|  \  \|_{\epsilon}$ defined by:

$$
\Vert u\Vert_{\epsilon}^{2}:=\frac{1}{\epsilon^{n}}\Big(\epsilon^{2}\int_{M}|\nabla_{g}u|^{2} \ d\mu_{g}+\int_{M}(\ep^2 {\bf c} s_g +1) \ u^{2} \ d\mu_{g}\Big)\ ,
$$

\noindent
with the points in $\overline{\xi}$ approaching  $x_0$:

\begin{thm}\label{main_thm} Assume that $\beta <0$. 
Let $\xi_0$ be an isolated local minimum  of the scalar curvature $s_g$. For each positive integer $k_0$, there exists $\ep_0= \ep_0(k_0) > 0$ such that for each
$\ep \in (0, \ep_0)$   there exist points $\xi_1^\ep, \dots, \xi_{k_0}^\ep\in M$,
\begin{equation}
\frac{d_g(\xi_i^\ep, \xi_j^\ep)}{\ep} \to +\infty \hspace{0.3cm} \hbox{ and } \hspace{0.3cm} d_g(\xi_0, \xi_j^\ep)\to 0,
\end{equation}

\noindent
and a solution $u_{\ep}$ of problem $(\ref{Yamabe})$ such that 
\begin{equation}
\Vert u_\ep - \sum_{i=1}^{k_0} W_{\ep,\xi_i^\ep}\Vert_{\epsilon} \to 0.
\end{equation}

\end{thm}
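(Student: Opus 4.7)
The plan is to apply a Lyapunov--Schmidt reduction in the spirit of \cite{Dancer}. Equip $H^{1}(M)$ with the inner product associated to $\|\cdot\|_{\epsilon}$ to obtain a Hilbert space $H_{\epsilon}$, and let $i_{\epsilon}^{*}:L^{p_{N}/(p_{N}-1)}(M)\to H_{\epsilon}$ denote the adjoint of the Sobolev embedding $H_{\epsilon}\hookrightarrow L^{p_{N}}(M)$; then $(\ref{Yamabe})$ becomes the fixed-point problem $u=i_{\epsilon}^{*}((u^{+})^{p_{N}-1})$. Solutions will be sought in the form $u=V_{\epsilon,\overline{\xi}}+\phi$ with $\overline{\xi}$ varying in the compact configuration space
\[
\Sigma_{\epsilon,\delta}:=\{\overline{\xi}\in M^{k_{0}}:\, d_{g}(\xi_{0},\xi_{i})\le\delta,\ d_{g}(\xi_{i},\xi_{j})\ge\epsilon|\log\epsilon|^{2}\text{ for }i\ne j\},
\]
and $\phi\in K_{\epsilon,\overline{\xi}}^{\perp}$, where $K_{\epsilon,\overline{\xi}}\subset H_{\epsilon}$ is the $(nk_{0})$-dimensional subspace spanned by the tangential derivatives $\partial_{\xi_{i}^{a}}W_{\epsilon,\xi_{i}}$ for $i=1,\ldots,k_{0}$ and $a=1,\ldots,n$. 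These directions form the approximate kernel of the linearization of $J_{\epsilon}$ at $V_{\epsilon,\overline{\xi}}$, inherited from the translation invariance of the limit equation $(\ref{ED})$.

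Split the fixed-point equation into an auxiliary part (projection onto $K_{\epsilon,\overline{\xi}}^{\perp}$) and a bifurcation part (projection onto $K_{\epsilon,\overline{\xi}}$). Solve the auxiliary equation for $\phi=\phi_{\epsilon}(\overline{\xi})$ by the Banach contraction theorem. The heart of the argument is the uniform invertibility on $K_{\epsilon,\overline{\xi}}^{\perp}$ of the linearization
\[
L_{\epsilon,\overline{\xi}}:=\Pi_{\overline{\xi}}^{\perp}\bigl(\mathrm{Id}-(p_{N}-1)\,i_{\epsilon}^{*}\circ\bigl(V_{\epsilon,\overline{\xi}}^{\,p_{N}-2}\,\cdot\,\bigr)\bigr),
\]
with bounds independent of $\epsilon$ and of $\overline{\xi}\in\Sigma_{\epsilon,\delta}$. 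This is established by a blow-up/contradiction argument: if $L_{\epsilon_{n},\overline{\xi}_{n}}\phi_{n}\to 0$ with $\|\phi_{n}\|_{\epsilon_{n}}=1$, one rescales around each peak by $z=\epsilon_{n}^{-1}\exp_{\xi_{i,n}}^{-1}(x)$; in the limit one recovers the linearization of $(\ref{ED})$ at $U$ on $\mathbb{R}^{n}$, whose kernel $\mathrm{span}\{\partial_{a}U\}$ has been removed by the orthogonality to $K_{\epsilon,\overline{\xi}}$. Together with the error estimate $\|V_{\epsilon,\overline{\xi}}-i_{\epsilon}^{*}(V_{\epsilon,\overline{\xi}}^{\,p_{N}-1})\|_{\epsilon}=O(\epsilon)$, obtained from the normal-coordinate expansion of the metric at each peak, this yields a unique $C^{1}$ map $\overline{\xi}\mapsto\phi_{\epsilon}(\overline{\xi})$ with $\|\phi_{\epsilon}(\overline{\xi})\|_{\epsilon}=O(\epsilon)$ uniformly on $\Sigma_{\epsilon,\delta}$.

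Define the reduced functional $\widetilde{J}_{\epsilon}(\overline{\xi}):=J_{\epsilon}(V_{\epsilon,\overline{\xi}}+\phi_{\epsilon}(\overline{\xi}))$. Standard differentiation of the implicit relation defining $\phi_{\epsilon}$ shows that any interior critical point of $\widetilde{J}_{\epsilon}$ on $\Sigma_{\epsilon,\delta}$ also solves the bifurcation equation and hence produces a genuine solution of $(\ref{Yamabe})$. The crucial energy expansion is
\[
\widetilde{J}_{\epsilon}(\overline{\xi})=k_{0}\,I_{\infty}+\epsilon^{2}\beta\sum_{i=1}^{k_{0}}s_{g}(\xi_{i})+o(\epsilon^{2})
\]
uniformly on $\Sigma_{\epsilon,\delta}$, where $I_{\infty}$ is the energy of the ground state $U$ for the Euclidean limit problem. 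The coefficient $\beta$ of $(\ref{beta})$ arises after Taylor-expanding $g^{ij}$, $\sqrt{\det g}$ and $s_{g}$ to second order at each $\xi_{i}$ in normal coordinates and changing variables by $z=\epsilon^{-1}\exp_{\xi_{i}}^{-1}(x)$: the $L^{2}$ part of $J_{\epsilon}$ contributes ${\bf c}\int U^{2}\,dz$ times $\epsilon^{2}s_{g}(\xi_{i})$, while the kinetic and $L^{p_{N}}$ parts jointly contribute $-\tfrac{1}{n(n+2)}\int|\nabla U|^{2}|z|^{2}\,dz$ times $\epsilon^{2}s_{g}(\xi_{i})$. The interaction terms among distinct peaks decay like $U(d_{g}(\xi_{i},\xi_{j})/\epsilon)$, which on $\Sigma_{\epsilon,\delta}$ is super-polynomially small; the correction from replacing $V_{\epsilon,\overline{\xi}}$ by $V_{\epsilon,\overline{\xi}}+\phi_{\epsilon}(\overline{\xi})$ is likewise absorbed in the $o(\epsilon^{2})$ remainder because $\phi_{\epsilon}=O(\epsilon)$.

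It remains to exhibit a critical point. Choose $\delta>0$ small enough that $s_{g}(\xi)>s_{g}(\xi_{0})$ whenever $0<d_{g}(\xi_{0},\xi)\le\delta$, and let $\overline{\xi}^{\epsilon}$ maximize $\widetilde{J}_{\epsilon}$ on the compact set $\Sigma_{\epsilon,\delta}$. Since $\beta<0$, the leading term $\epsilon^{2}\beta\sum_{i}s_{g}(\xi_{i})$ is maximized by minimizing $\sum s_{g}(\xi_{i})$; the expansion then forces $d_{g}(\xi_{0},\xi_{i}^{\epsilon})\to 0$, because otherwise some $s_{g}(\xi_{i}^{\epsilon})\ge s_{g}(\xi_{0})+\eta(\delta)$ would entail a loss of order $\epsilon^{2}$ in $\widetilde{J}_{\epsilon}$ compared with configurations close to $(\xi_{0},\ldots,\xi_{0})$. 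Similarly, the separation constraint $d_{g}(\xi_{i},\xi_{j})=\epsilon|\log\epsilon|^{2}$ is not active at the maximizer: interactions at this separation are super-polynomially small, while moving peaks slightly apart changes $\sum s_{g}(\xi_{i})$ only by $O(\epsilon^{4}|\log\epsilon|^{4})$ via the second-order Taylor approximation of $s_{g}$ at $\xi_{0}$, so the constraint does not affect the leading-order maximization. Hence $\overline{\xi}^{\epsilon}$ is an interior critical point of $\widetilde{J}_{\epsilon}$, and $u_{\epsilon}:=V_{\epsilon,\overline{\xi}^{\epsilon}}+\phi_{\epsilon}(\overline{\xi}^{\epsilon})$ is the required solution with the stated asymptotic properties. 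The main obstacle is the uniform invertibility of $L_{\epsilon,\overline{\xi}}$, which must simultaneously accommodate the curvature-induced perturbation of the limit operator at each peak and the multipeak structure of the ansatz.
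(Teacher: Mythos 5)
Your overall strategy (Lyapunov--Schmidt reduction, energy expansion, maximization over a configuration set) is the same as the paper's, but two steps as written do not go through. First, the bound $\|\phi_{\epsilon}(\overline{\xi})\|_{\epsilon}=O(\epsilon)$ is not strong enough to absorb the correction $J_{\epsilon}(V_{\epsilon,\overline{\xi}}+\phi_{\epsilon})-J_{\epsilon}(V_{\epsilon,\overline{\xi}})$ into the $o(\epsilon^{2})$ remainder: since $\phi_{\epsilon}$ solves the auxiliary equation and is orthogonal to $K_{\epsilon,\overline{\xi}}$, the linear terms cancel and the correction is of size $\|\phi_{\epsilon}\|_{\epsilon}^{2}$, which under your estimate is only $O(\epsilon^{2})$ --- the same order as the curvature term you are trying to isolate. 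You need $\|\phi_{\epsilon}\|_{\epsilon}=o(\epsilon)$; this is available because in normal coordinates $g_{ij}=\delta_{ij}+O(|z|^{2})$ (no linear term), so the error $\|V_{\epsilon,\overline{\xi}}-i_{\epsilon}^{*}(V_{\epsilon,\overline{\xi}}^{\,p-1})\|_{\epsilon}$ is in fact $O(\epsilon^{2})$ plus exponentially small interaction terms, as in Proposition \ref{prop1}.

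Second, and more seriously, with the constraint $d_{g}(\xi_{i},\xi_{j})\ge\epsilon|\log\epsilon|^{2}$ the maximizer of the reduced functional on your set $\Sigma_{\epsilon,\delta}$ lies on the separation boundary, so it is not a critical point. Since $\beta<0$ and $\xi_{0}$ is a minimum of $s_{g}$, both penalties $\epsilon^{2}\beta\sum_{i}(s_{g}(\xi_{i})-s_{g}(\xi_{0}))$ and $-\frac{1}{2}\sum_{i\neq j}\gamma_{ij}U(d_{ij}/\epsilon)$ are nonpositive, and they push the peaks respectively toward $\xi_{0}$ and away from each other; balancing a loss of order $\epsilon^{2}d^{2}$ in the curvature term against an interaction of order $e^{-d/\epsilon}$ shows the optimal separation is $d\sim\epsilon\log(1/\epsilon)$, strictly inside your excluded region, so at the maximizer the constraint is active (shrinking $d_{ij}$ from $\epsilon|\log\epsilon|^{2}$ strictly increases the objective). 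Your remark that the constraint ``does not affect the leading-order maximization'' only shows that the expansion cannot distinguish boundary from interior points at order $o(\epsilon^{2})$ --- which is precisely why this choice of configuration space fails. The paper instead works on $D_{\epsilon,\rho}^{k_{0}}$, whose separation boundary is $\sum_{i\neq j}U_{\epsilon}(\exp_{\xi_{i}}^{-1}\xi_{j})=\epsilon^{2}$, so that a boundary configuration incurs an interaction penalty of the detectable order $\epsilon^{2}$; it then rules out boundary maximizers by comparison with the explicit test configuration $\eta_{i}=\exp_{\xi_{0}}(\sqrt{\epsilon}\,e_{i})$. You would need either to adopt such a constraint or to supply a separate argument (for instance a max--min scheme in the separation variables) to obtain an interior critical point.
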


It has been proved by A. M. Micheletti and A. Pistoia \cite{Micheletti2} that for a generic Riemannian metric the critical points of the scalar curvature are non-degenerate
and in particular isolated. If $\beta >0$ one can prove the same theorem replacing the isolated local minimum of the scalar curvature
by an isolated local maximum.

As mentioned before, the theorem gives multiplicity results for the Yamabe equation on products (\ref{originalYamabe}) and certain
Riemannian submersions. Most of the known multiplicity results in these situations use bifurcation theory and assume that $s_g$ is constant
(\cite{BP1}, \cite{BP2}, \cite{Piccione}, \cite{Otoba}, \cite{Petean2}). The situation when $s_g$ is not constant, like in the theorem
was treated by J. Petean in \cite{Petean}, where it is proved that equation (\ref{Yamabe}) has $Cat(M) +1 $ ($Cat(M)$ is the Lusternik-Schnirelmann 
category of $M$)
solutions with low energy. To describe this result consider the Nehari manifold $N_{\epsilon}$, associated to $J_{\epsilon}$,

$$N_{\epsilon}=\{u \in H^{1}(M)-\{0\}: \int_{M} \left(  \epsilon^2 |\nabla u|^2 + \left( {\bf c}{s_g} \epsilon^2 +1\right)u^2\right)  d\mu_g= \int_{M}(u^+)^{p} d\mu_g \}.$$

Since for $u\in N_{\epsilon}$, $J_{\epsilon}(u)=\frac{1}{\epsilon^n} \frac{p-2}{2p}  \int_{M} (u^+)^{p}  d\mu_g$, it follows that  $J_{\epsilon}(u)$ restricted to $N_{\epsilon}$ is bounded below.  Set

$$m_{\epsilon}= \inf_{u \in N_{\epsilon}} J_{\epsilon}(u).$$

It is known   (cf. in \cite{Petean}, \cite{Akutagawa}), that, for these settings, 
	$$\lim_{\epsilon \rightarrow 0} m_{\epsilon} =\frac{p-2}{2p}|| U||_p^p.$$

 Given $\epsilon>0$ and $d>0$, we let,
$$ \Sigma_{\epsilon,d}=\{u \in N_{\epsilon}: J_{\epsilon}(u)<d\}.$$

The solutions in \cite{Petean} are in $ \Sigma_{\epsilon,m_{\epsilon} +\delta}$, for $\delta >0$ small.

Our next theorem shows
that those solutions have only one peak.

\begin{thm}
Let $\delta \in \re$ be such that $0<\delta<\frac{p-2}{2p}|| U||_p^p$. There exists $\epsilon_0>0$, such that if $\epsilon \in (0,\epsilon_0)$ and 
$u_{\epsilon}$ is a solution of  $(\ref{Yamabe})$,
with $u_{\epsilon} \in \Sigma_{\epsilon,  \frac{p-2}{2p}|| U||_p^p  + \delta}$, then   $u_{\epsilon}$ has a unique maximum point.
\end{thm}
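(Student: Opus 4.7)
The plan is to argue by contradiction via a blow-up analysis. Suppose there exists a sequence $\epsilon_k \to 0$ and solutions $u_{\epsilon_k}$ of (\ref{Yamabe}) lying in $\Sigma_{\epsilon_k, \frac{p-2}{2p}\|U\|_p^p + \delta}$, each possessing two distinct local maxima $x_k^{(1)} \ne x_k^{(2)}$. At any local maximum $x_k$ of $u_{\epsilon_k}$ one has $-\Delta_g u_{\epsilon_k}(x_k) \ge 0$, so the equation yields
\[
u_{\epsilon_k}(x_k)^{p-2} \ge 1 + \epsilon_k^2 {\bf c}\, s_g(x_k) \ge \frac{1}{2}
\]
for $\epsilon_k$ small enough, hence the uniform lower bound $u_{\epsilon_k}(x_k) \ge c_0 > 0$.

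Around each $x_k^{(i)}$ I would rescale by setting $v_k^{(i)}(y) := u_{\epsilon_k}(\exp_{x_k^{(i)}}(\epsilon_k y))$ on $B(0, r_0/\epsilon_k) \subset \re^n$. In normal coordinates the pulled-back metric converges locally to the Euclidean one, and $v_k^{(i)}$ satisfies
\[
-\Delta_{g_k} v_k^{(i)} + \bigl(1 + \epsilon_k^2 {\bf c}\, s_g(\exp_{x_k^{(i)}}(\epsilon_k y))\bigr) v_k^{(i)} = (v_k^{(i)})^{p-1}.
\]
Moser iteration (using the subcritical energy bound) provides a uniform $L^\infty$ bound, and standard elliptic estimates yield a $C^2_{\text{loc}}$ subsequential limit $v^{(i)}$ which is a nontrivial, positive, bounded solution of $-\Delta v + v = v^{p-1}$ on $\re^n$ with $v^{(i)}(0) \ge c_0$. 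By the uniqueness result of \cite{Kwong}, $v^{(i)}$ is a translate of $U$.

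Now set $d_k := d_g(x_k^{(1)}, x_k^{(2)})/\epsilon_k$ and consider two cases. If $d_k \to \infty$ along a subsequence, the bumps at $x_k^{(1)}$ and $x_k^{(2)}$ are supported on asymptotically disjoint balls; a change of variables and Fatou's lemma give
\[
\liminf_{k \to \infty} J_{\epsilon_k}(u_{\epsilon_k}) \ge 2 \cdot \frac{p-2}{2p}\|U\|_p^p,
\]
contradicting $J_{\epsilon_k}(u_{\epsilon_k}) < \frac{p-2}{2p}\|U\|_p^p + \delta$ since $\delta < \frac{p-2}{2p}\|U\|_p^p$. Otherwise $d_k$ is bounded; after passing to a subsequence $d_k \to d^* \ge 0$. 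In coordinates at $x_k^{(1)}$ the point $x_k^{(2)}$ corresponds to $y_k^{(2)}$ with $|y_k^{(2)}| = d_k$, and both $0$ and $y_k^{(2)}$ are critical points of $v_k^{(1)}$. Matching the critical point at $0$ in the $C^2$ limit forces $v^{(1)} = U$; since $U$ is radial and strictly decreasing, its only critical point is the origin. If $d^* > 0$ the limit would have a second critical point at $y^* \ne 0$, a contradiction. If $d^* = 0$ then $y_k^{(2)} \to 0$; the Hessian of $U$ at the origin being strictly negative definite (a consequence of the radial ODE), the implicit function theorem yields a unique critical point of $v_k^{(1)}$ in a small ball around $0$ for $k$ large, contradicting the presence of two.

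The main technical obstacle is the blow-up step on the manifold: obtaining the uniform $L^\infty$ bound, verifying the $C^2_{\text{loc}}$ convergence of the rescaled solutions to solutions of the limit equation on $\re^n$, and bookkeeping the energy decomposition in $J_{\epsilon_k}$ so that each peak contributes at least $\frac{p-2}{2p}\|U\|_p^p + o(1)$ to the energy, which is precisely what closes the first case.
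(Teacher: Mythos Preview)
Your proposal is correct and follows essentially the same strategy as the paper: a blow-up/rescaling argument showing $C^2_{\mathrm{loc}}$ convergence of the rescaled solutions to $U$, then a dichotomy on the rescaled distance between the two local maxima, with the ``far'' case ruled out by energy quantization and the ``near'' case by the uniqueness and non-degeneracy of the critical point of $U$. The paper organizes this into a preliminary lemma giving (i) a single local maximum in any ball $B_g(x_\epsilon,\epsilon\tilde R)$ and (ii) a quantitative lower bound $J_\epsilon(u_\epsilon)\big|_{B_g(x_\epsilon,\epsilon R_\eta)}>\eta\,m(E)$, and then splits according to whether $d_g(x^{(1)}_\epsilon,x^{(2)}_\epsilon)<2R_\eta\epsilon$ or not, rather than your $d_k\to\infty$ versus $d_k$ bounded---but this is only a packaging difference, not a different idea.
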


In section 2 we will introduce notation and background and discuss a finite dimensional reduction of the problem.
In sections 3 and 4 we will prove
 Theorem 1.1 assuming the technical Proposition \ref{prop1}, which is proved in section 6.
In section 5 we will prove Theorem 1.2. We discuss the numerical calculation of the constant $\beta$ in the final appendix.
\\

\textbf{Acknowledgments.} The authors would like to thank Prof. Jimmy Petean for many helpful discussions on the subject. The second author was supported by program UNAM-DGAPA-PAPIIT IA106918.

\section{Approximate solutions and the reduction of the equation}

Positive solutions of (\ref{ED}) are the critical points of the functional $E:H^{1}(\re^n)\rightarrow \re$,
$$E(f)=\int_{\re^n} \left(\frac{1}{2}|\nabla f|^2 +\frac{1}{2} f^2-\frac{1}{p}(f^+)^p\right)dx.$$

Let $S_0 =\nabla E : H^{1}(\re^n)\rightarrow H^{1}(\re^n) $. $S_0 (U)=0$
and the solution $U$ is non-degenerate in the sense that Kernel$(S_0 '(U)) $ is spanned by 

$$
\psi^{i}(x):=\frac{\partial U}{\partial x_{i}}(x) 
$$
with $i=1,\dots, n.$

Note that for any $\epsilon>0$,  the function $U_{\epsilon}(x)=U(\frac{x}{\epsilon})$, is a solution of 
\begin{equation}
\label{Kwong2}
-\epsilon^2 \Delta U_{\epsilon}+ U_{\epsilon}=U_{\epsilon}^{p-1}.
\end{equation}

\noindent and so it is a critical point of  the functional 
$$E_{\epsilon}(f)=\epsilon^{-n}\int_{\re^n} \left(\frac{\epsilon^2}{2}|\nabla f|^2 +\frac{1}{2} f^2-\frac{1}{p}(f^+)^p\right)dx.$$

If ${S_{0}}_{\epsilon} = \nabla E_{\epsilon}$ then Kernel(${S_0}_{\epsilon}'(U_{\epsilon}) $) is spanned by the functions

$$
\psi_{\epsilon}^{i}(x):= \psi^{i}(\epsilon^{-1}x)
$$
with $i=1,\dots, n.$

Let us define on $M$ the functions 

\begin{equation}
 Z_{\ep,\xi}^{i}(x):= \left\{ \begin{array}{lcc}
             \psi_{\epsilon}^{i}(\exp_{\xi}^{-1}(x))\chi_{r}(\exp_{\xi}^{-1}(x)) &   \hbox{ if }  &x\in B_g(\xi,r)  \\
             \\ 0 &   & \hbox{ otherwise}. 
             \end{array}
   \right.
\end{equation}

Let $H_\ep$ be the Hilbert space $H^1_g(M)$ equipped with the inner product
$$
\langle u, v\rangle_{\ep}:=\frac{1}{\epsilon^{n}}\Big(\epsilon^{2}\int_{M}\nabla_{g}u\nabla_{g}v \ d\mu_{g}+\int_{M}(\ep^2 {\bf c}s_g +1) \ uv \ d\mu_{g}\Big)\ ,
$$
which induces the norm
$$
\Vert u\Vert_{\epsilon}^{2}:=\frac{1}{\epsilon^{n}}\Big(\epsilon^{2}\int_{M}|\nabla_{g}u|^{2} \ d\mu_{g}+\int_{M}(\ep^2 {\bf c} s_g +1) \ u^{2} \ d\mu_{g}\Big)\ .
$$

Similarly on $\re^n$ we define the inner product for  $u,v \in H^1_g(\re^n )$
$$
\langle u, v\rangle_{\ep}:=\frac{1}{\epsilon^{n}}\Big(\epsilon^{2}\int_{\re^n }\nabla u\nabla v \ dz +\int_{\re^n }  \ uv \ dz \Big)\ ,
$$
which induces the norm
$$
\Vert u\Vert_{\epsilon}^{2}:=\frac{1}{\epsilon^{n}}\Big(\epsilon^{2}\int_{\re^n }|\nabla u|^{2} \ dz +\int_{\re^n } \ u^{2} \ dz \Big)\ .
$$

It is important to note that $\Vert f_{\epsilon} \Vert_{\epsilon}$ is independent of $\epsilon$, where as before $f_{\epsilon} (x) = f(\frac{x}{\epsilon})$.

For $\epsilon >0$ and $\overline{\xi} =( \xi_1 ,..., \xi_{k_0} ) \in M^{k_0}$ let
$$K_{\epsilon,\overline{\xi}} :=\hbox{ span } \{Z_{\epsilon,\xi_{j}}^{i} : i=1, \dots, n, j=1, \dots, {k_0}\}$$
and

$$K_{\epsilon,\overline{\xi}}^{\perp}:=\{\phi\in H_{\epsilon}: \displaystyle<\phi, Z_{\epsilon,\xi_{j}}^{i}\displaystyle>_{\epsilon}=0, i=1, \dots, n, j=1 \dots, {k_0}\}.$$
Let $\Pi_{\epsilon,\overline{\xi}}$ : $H_{\epsilon}\rightarrow K_{\epsilon,\overline{\xi}}$ and $\Pi_{\epsilon\overline{\xi}}^{\perp}$: 
$H_{\epsilon}\rightarrow K_{\epsilon,\overline{\xi}}^{\perp}$ be the orthogonal projections. 
In order to solve equation (\ref{Yamabe}) we call 
$$S_{\epsilon} = \nabla J_{\epsilon} : H_{\epsilon} \rightarrow H_{\epsilon}.$$

\noindent
Equation (\ref{Yamabe}) is then $S_{\epsilon} (u) =0$.
The idea is that  the kernel of $S_{\epsilon}'( V_{\epsilon,\overline{\xi}} )$ should be close to $K_{\epsilon,\overline{\xi}}$ and then the 
linear map $\phi  \mapsto \Pi_{\epsilon\overline{\xi}}^{\perp} S_{\epsilon}'  ( V_{\epsilon,\overline{\xi}} )( \phi ) : K^{\perp} \rightarrow K^{\perp}$
should be invertible. Then  the inverse function theorem would imply that there is a unique small $\phi =\phi_{\epsilon ,\overline{\xi}} \in
K_{\epsilon,\overline{\xi}}^{\perp}$ such that
(this is the content of Proposition 2.1)

\begin{equation}
\label{Pi}
\Pi_{\epsilon,\overline{\xi}}^{\perp}\{ S_{\epsilon} ( V_{\epsilon,\overline{\xi}}+\phi ) \}=0 .
\end{equation}

And then we have to solve the finite dimensional problem

\begin{equation}
\label{PiP}
\Pi_{\epsilon,\overline{\xi}}\{      S_{\epsilon} ( V_{\epsilon,\overline{\xi}}+\phi )       \}=0.
\end{equation}

Consider  the function $\overline{J_{\epsilon}}$ : $M^{k_0}\rightarrow \mathbb{R}$ defined by
$$
\overline{J_{\epsilon}}(\overline{\xi}):=J_{\epsilon}(V_{\epsilon,\overline{\xi}}+\phi_{\epsilon,\overline{\xi}})\ .
$$

We will show in Proposition 3.1 that (\ref{PiP}) is equivalent to finding critical points of  $\overline{J_{\epsilon}}$.

\vspace{.5cm}

Let $\xi_{0}\in M$ be an isolated local minimum point of the scalar curvature. Let $k_0\geq 1$ be a fixed integer.
Given $\rho>0$, $\epsilon >0$ we consider the open set
\begin{equation}
 D_{\epsilon,\rho}^{{k_0}}:=\displaystyle\Big\{\overline{\xi} \in M^{k_0}  \hspace{0.2cm} / \hspace{0.2cm}
  \hspace{0.2cm} d_{g}(\xi_{0},\xi_{i})<\rho, i=1,\dots, {k_0}, \hspace{0.2cm}
\displaystyle\sum_{i\neq j}^{k_0}U_{\epsilon}\Big(\exp_{\xi_{i}}^{-1}\xi_{j}\Big)<\epsilon^{2}\Big\}. 
\end{equation}

Recall $U_{\epsilon}\Big(\exp_{\xi_{i}}^{-1}\xi_{j}\Big) = U  \Big( \epsilon^{-1} \exp_{\xi_{i}}^{-1}\xi_{j}\Big) $ and that
$U$ is a radial, positive, decreasing function. Then if 
$\overline{\xi}_{\epsilon} =({\xi_{\epsilon}}_1 , ...,{\xi_{\epsilon}}_{k_0} )\in D_{\epsilon,\rho}^{k_0}$ 
since $\| \exp_{\xi_{i}}^{-1}\xi_{j} \| = d_{g}({\xi_{\epsilon}}_{i},{\xi_{\epsilon}}_{j})$ we have that 
\begin{equation}\label{distance properties}
\lim_{\epsilon \rightarrow 0}  \frac{d_{g}({\xi_{\epsilon}}_{i},{\xi_{\epsilon}}_{j})}{\epsilon} = +  \infty .
\end{equation}

Moreover for any $\delta >0$ we have

\begin{equation}\label{distance properties2}
\lim_{\epsilon \rightarrow 0}  \frac{1}{\epsilon^2} \  e^{-(1+\delta )  \frac{d_{g}({\xi_{\epsilon}}_{i},{\xi_{\epsilon}}_{j})}{\epsilon} } = 0 .
\end{equation}

This follows from (\ref{ED1}): if we had $a>0$ and a sequence $\epsilon_i \rightarrow 0$ such that 

$$e^{-(1+\delta )  \frac{d_{g}({\xi_{\epsilon}}_{i},{\xi_{\epsilon}}_{j})}{\epsilon} } >a \epsilon^2 ,$$

then 

$$e^{-  \frac{d_{g}({\xi_{\epsilon}}_{i},{\xi_{\epsilon}}_{j})}{\epsilon} } >a \epsilon^2 e^{ \delta   \frac{d_{g}({\xi_{\epsilon}}_{i},{\xi_{\epsilon}}_{j})}{\epsilon} },$$

and applying (\ref{ED1}) to $ \epsilon^{-1} \exp_{\xi_{i}}^{-1}\xi_{j}$ since $U  \Big( \epsilon^{-1} \exp_{\xi_{i}}^{-1}\xi_{j}\Big) < \epsilon^2$ we get

$$\epsilon^2 \Big( \frac{d_{g}({\xi_{\epsilon}}_{i},{\xi_{\epsilon}}_{j})}{\epsilon} \Big)^{\frac{n-1}{2}} >c e^{-  \frac{d_{g}({\xi_{\epsilon}}_{i},{\xi_{\epsilon}}_{j})}{\epsilon} } > ca \epsilon^2 e^{ \delta   \frac{d_{g}({\xi_{\epsilon}}_{i},{\xi_{\epsilon}}_{j})}{\epsilon} },$$

\noindent
giving a contradiction.

We will prove:

\begin{prop}\label{prop1}
There exists $\rho_0>0, \epsilon_{0}>0, c>0$  and $\sigma>0$ such that for any $\rho\in(0,\ \rho_{0})$ , 
$\epsilon\in(0,\ \epsilon_{0})$  and $\overline{\xi}\in D_{\epsilon,\rho}^{k_0}$  there exists a unique
$\phi_{\epsilon,\overline{\xi}}=\phi(\epsilon,\ \overline{\xi}) \in
K_{\epsilon,\overline{\xi}}^{\perp}$ which solves equation $(\ref{Pi})$ and satisfies
\begin{equation}\label{desigualdad phi}
 \displaystyle \Vert\phi_{\epsilon,\overline{\xi}}\Vert_{\epsilon}\leq c\Big(\epsilon^{2}+\sum_{i\neq j} e^{-\frac{(1+\sigma)d_{g}(\xi_{i},\xi_{j})}{2\epsilon} }\Big).
\end{equation}
 Moreover, $\overline{\xi}\rightarrow\phi_{\epsilon,\overline{\xi}}$  is a $C^{1}$- map. Note that by (\ref{distance properties2}) $ \Vert\phi_{\epsilon,\overline{\xi}}\Vert_{\epsilon} =o(\epsilon )$.
\end{prop}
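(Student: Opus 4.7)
The plan is a standard Lyapunov--Schmidt contraction argument. Write
\[
S_{\epsilon}(V_{\epsilon,\overline{\xi}}+\phi)=S_{\epsilon}(V_{\epsilon,\overline{\xi}})+L_{\epsilon,\overline{\xi}}\phi+N_{\epsilon,\overline{\xi}}(\phi),
\]
where $L_{\epsilon,\overline{\xi}}=S_{\epsilon}'(V_{\epsilon,\overline{\xi}})$ and $N_{\epsilon,\overline{\xi}}(\phi)$ is the Taylor remainder of the nonlinearity $u\mapsto (u^{+})^{p-1}$. Setting $\mathcal{L}:=\Pi^{\perp}_{\epsilon,\overline{\xi}}L_{\epsilon,\overline{\xi}}|_{K^{\perp}_{\epsilon,\overline{\xi}}}$, equation $(\ref{Pi})$ becomes the fixed point problem $\phi=T(\phi)$ with
\[
T(\phi):=-\mathcal{L}^{-1}\Bigl(\Pi^{\perp}_{\epsilon,\overline{\xi}}S_{\epsilon}(V_{\epsilon,\overline{\xi}})+\Pi^{\perp}_{\epsilon,\overline{\xi}}N_{\epsilon,\overline{\xi}}(\phi)\Bigr),
\]
provided the projected linear operator is invertible with a uniformly bounded inverse.

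The main obstacle is to prove that $\mathcal{L}\colon K^{\perp}_{\epsilon,\overline{\xi}}\to K^{\perp}_{\epsilon,\overline{\xi}}$ is invertible with $\|\mathcal{L}^{-1}\|$ bounded uniformly in $\epsilon\in(0,\epsilon_{0})$ and $\overline{\xi}\in D^{k_0}_{\epsilon,\rho}$. I would argue by contradiction: assume sequences $\epsilon_{n}\to 0$, $\overline{\xi}_{n}=(\xi^{n}_{1},\dots,\xi^{n}_{k_0})\in D^{k_0}_{\epsilon_{n},\rho}$ and $\phi_{n}\in K^{\perp}_{\epsilon_{n},\overline{\xi}_{n}}$ with $\|\phi_{n}\|_{\epsilon_{n}}=1$ but $\|\mathcal{L}\phi_{n}\|_{\epsilon_{n}}\to 0$. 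Pulling back via $z\mapsto \exp_{\xi^{n}_{j}}(\epsilon_{n}z)$ produces bounded sequences $\tilde\phi_{n,j}$ in $H^{1}(\re^{n})$; the separation condition $(\ref{distance properties})$ decouples the bumps, so each weak limit $\overline{\phi}_{j}$ solves the limit linearization $-\Delta\overline{\phi}_{j}+\overline{\phi}_{j}=(p-1)U^{p-2}\overline{\phi}_{j}$ on $\re^{n}$, and the orthogonality of $\phi_{n}$ to every $Z^{i}_{\epsilon_{n},\xi^{n}_{j}}$ passes to $\langle\overline{\phi}_{j},\partial_{i}U\rangle_{H^{1}}=0$. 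Non-degeneracy of $U$ (Kernel$(S_0'(U))=\mathrm{span}\{\psi^{i}\}$) forces $\overline{\phi}_{j}=0$ for all $j$; a localization argument splitting $M$ into balls around the $\xi^{n}_{j}$ and their complement (where coercivity of the linear part dominates because $V_{\epsilon_n,\overline{\xi}_n}$ is exponentially small there) then gives $\|\phi_{n}\|_{\epsilon_{n}}\to 0$, contradicting the normalization.

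Once uniform invertibility of $\mathcal{L}$ is in hand, the error estimate
\[
\|\Pi^{\perp}_{\epsilon,\overline{\xi}}S_{\epsilon}(V_{\epsilon,\overline{\xi}})\|_{\epsilon}\leq c\Bigl(\epsilon^{2}+\sum_{i\neq j}e^{-(1+\sigma)d_{g}(\xi_{i},\xi_{j})/(2\epsilon)}\Bigr)
\]
is obtained by expanding $S_{\epsilon}(V_{\epsilon,\overline{\xi}})$ in normal coordinates around each $\xi_{i}$ and using that $U_{\epsilon}$ solves $(\ref{Kwong2})$ on $\re^{n}$. The discrepancy between $\Delta_{g}$ and $\Delta$, together with the $\epsilon^{2}c_{N}s_{g}$ term, contributes $O(\epsilon^{2})$ through the normal-coordinate expansion $g_{ij}=\delta_{ij}+O(|x|^{2})$ and the fact that $\|\chi_r U_\epsilon\|_\epsilon$ is $O(1)$; the cutoff error is exponentially small by $(\ref{ED1})$; and cross terms of the form $W_{\epsilon,\xi_{i}}^{p-2}W_{\epsilon,\xi_{j}}$ with $i\neq j$ are controlled by the decay $(\ref{ED1})$--$(\ref{ED2})$ to yield the stated exponential term for some small $\sigma>0$. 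The nonlinear remainder satisfies $\|\Pi^{\perp}N_{\epsilon,\overline{\xi}}(\phi)\|_{\epsilon}\leq C(\|\phi\|_{\epsilon}^{\min\{2,p-1\}}+\|\phi\|_{\epsilon}^{p-1})$ with a matching Lipschitz bound, coming from the pointwise inequality for the Taylor remainder of $(u^{+})^{p-1}$ combined with the uniform-in-$\epsilon$ Sobolev embedding $H_{\epsilon}\hookrightarrow L^{p}_{\epsilon}$ (this uniformity is precisely why $\|\cdot\|_{\epsilon}$ is rescaled by $\epsilon^{-n}$).

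Combining these three ingredients, for all $\epsilon$ and $\overline{\xi}$ as in the statement, $T$ is a contraction on the ball of radius $2c(\epsilon^{2}+\sum_{i\neq j}e^{-(1+\sigma)d_{g}(\xi_{i},\xi_{j})/(2\epsilon)})$ inside $K^{\perp}_{\epsilon,\overline{\xi}}$, and Banach's fixed point theorem delivers the unique $\phi_{\epsilon,\overline{\xi}}$ satisfying $(\ref{desigualdad phi})$. The $C^{1}$-dependence on $\overline{\xi}$ follows from the implicit function theorem applied to the smooth map $(\overline{\xi},\phi)\mapsto \Pi^{\perp}_{\epsilon,\overline{\xi}}S_{\epsilon}(V_{\epsilon,\overline{\xi}}+\phi)$ at the constructed fixed point, whose $\phi$-derivative there equals $\mathcal{L}$ plus a perturbation of size $O(\|\phi_{\epsilon,\overline{\xi}}\|_{\epsilon})$, hence still invertible.
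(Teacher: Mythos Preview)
Your proposal is correct and follows essentially the same route as the paper's proof: the same decomposition $S_\epsilon(V_{\epsilon,\overline{\xi}}+\phi)$ into error term, projected linearization, and nonlinear remainder; the same contradiction argument for uniform invertibility of $\mathcal{L}$ via rescaling around each $\xi_j$, non-degeneracy of $U$, and a near/far splitting of $M$; the same error bound $O(\epsilon^2)+O\bigl(\sum_{i\neq j}e^{-(1+\sigma)d_g(\xi_i,\xi_j)/(2\epsilon)}\bigr)$; and the same Banach fixed point plus implicit function theorem conclusion. The only cosmetic difference is that the paper organizes the contradiction by introducing $u_j=\phi_j-(\psi_j+\zeta_j)$ and showing $\epsilon_j^{-n}\int_M f'(V_{\epsilon_j,\overline{\xi}_j})u_j^2$ tends simultaneously to $1$ and to $0$, whereas you phrase it directly as $\|\phi_n\|_{\epsilon_n}\to 0$; the underlying mechanism is identical.
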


The proof of the proposition is  technical and follows the same lines used in
previous works, see \cite{Dancer, Floer, Micheletti}. For completeness we will sketch the proof following the proof
in \cite{Dancer},
but we pospone it to Section 6. In the next two sections we will prove Theorem 1.1 assuming this 
proposition.

\vspace{.5cm}

On the Banach space $\mathrm{L}_{g}^{q}(M)$ consider  the norm
$$
|u|_{q,\epsilon}:=\Big(\frac{1}{\epsilon^{n}}\int_{M}|u|^{q} \ d\mu_{g}\Big)^{1/q}
$$

Since $2<p<\frac{2n}{n-2}$ it follows from the usual Sobolev inequalities that there exists a constant $c$
independent of $\epsilon$ such that

\begin{equation}\label{norma_q}
 |u|_{p,\epsilon}\leq c\Vert u\Vert_{\ep}
\end{equation}
for any $u\in H_{\epsilon}$.

We denote by $L_{\epsilon}^{p}$ the Banach space $\mathrm{L}_{g}^{p}(M)$ with the norm $
|u|_{p,\epsilon}$. For $p' :=\displaystyle \frac{p}{p-1}$ the dual space ${L_{\epsilon}^{p}}^*$ is
identified with $L_{\epsilon}^{p'}$ with the pairing

$$ < \varphi , \psi > = \frac{1}{\epsilon^{n}}\int_{M}\varphi \psi $$

\noindent
for $\varphi \in L_{\epsilon}^{p}$, $\psi \in L_{\epsilon}^{p'}$.

 The embedding $\io_{\epsilon}$ : $H_{\epsilon}\rightarrow L_{\epsilon}^{p}$
is a compact continuous map and 
the adjoint operator $\io_{\epsilon}^{*}:L_{\epsilon}^{p'}\rightarrow H_{\epsilon} $, 
is a continuous map such that

$u=\io_{\epsilon}^{*}(v)\Leftrightarrow< \io_{\epsilon}^{*}(v),\displaystyle \varphi>_{\epsilon}=\frac{1}{\epsilon^{n}}\int_{M}v\varphi$, $\varphi\in H_{\epsilon} $
$\Leftrightarrow$ 

\begin{equation}
\label{Adjoint}
-\epsilon^{2}\triangle_{g}u+(\ep^2 {\bf c} s_g +1)u=v \hbox{ (weakly) on } M.
\end{equation}

 Moreover for the same constant $c$ in (\ref{norma_q}) we have that 
\begin{equation}\label{norma_ep}
 \Vert \io_{\epsilon}^{*}(v)\Vert_{\epsilon}\leq c|v|_{p',\epsilon}
\end{equation}
for any $v\in L_{\epsilon}^{p'}$.

Let 

$$f(u):=(u^{+})^{p-1}.$$

Note that 
\begin{equation}
S_{\epsilon} (u) = u- \io_{\epsilon}^{*}(f(u))\ ,\ u\in H_{\epsilon} ,
\end{equation}

\noindent
and we can rewrite problem $(\ref{Yamabe})$ in the equivalent way
\begin{equation}\label{equivalent problem}
 u=\io_{\epsilon}^{*}(f(u))\ ,\ u\in H_{\epsilon}.
\end{equation}
Note that a solution to (\ref{equivalent problem}) is a critical point of $J_{\epsilon}$ and
so it is a positive function. 

\vspace{1cm}

Now we will discuss some estimates related to the approximate solutions. 
The estimates are similar to ones obtained in \cite{Dancer, Micheletti} and we refer the reader to these articles for details.

The next lemma gives an explicit sense in which 
$W_{\epsilon , \xi}$ is an approximate solution of equation (\ref{Yamabe}):

\begin{lem} There exists a constant $c$ and $\epsilon_0 >0$ such that for any $\epsilon \in (0,\epsilon_0 )$, $\xi \in M$,
$$\| S_{\epsilon} (W_{\epsilon , \xi} ) \|_{\epsilon} \leq c \epsilon^2 $$
\end{lem}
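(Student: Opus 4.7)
The plan is to use that $S_\epsilon(W_{\epsilon,\xi})$ equals $\iota_\epsilon^*$ applied to the pointwise defect of $W_{\epsilon,\xi}$ as a solution of $(\ref{Yamabe})$, and then to bound that defect in $L^{p'}_\epsilon$. Concretely, by $(\ref{Adjoint})$ one has
$$S_\epsilon(W_{\epsilon,\xi})=\iota_\epsilon^*(\mathcal{E}_{\epsilon,\xi}),\qquad \mathcal{E}_{\epsilon,\xi}:=-\epsilon^{2}\Delta_{g}W_{\epsilon,\xi}+(1+\epsilon^{2}\mathbf{c}\,s_{g})W_{\epsilon,\xi}-W_{\epsilon,\xi}^{p-1},$$
so by $(\ref{norma_ep})$ it suffices to prove $|\mathcal{E}_{\epsilon,\xi}|_{p',\epsilon}\leq c\epsilon^{2}$ uniformly in $\xi$.

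First I would work in normal coordinates $y=\exp_\xi^{-1}(x)$ on $B_g(\xi,r)$ and split the support into the core $|y|<r/2$, where $\chi_r\equiv 1$ and $W_{\epsilon,\xi}\circ\exp_\xi = U_\epsilon$, and the annulus $r/2\leq|y|\leq r$ where the cut-off varies. On the core, using the Euclidean limit equation $-\epsilon^{2}\Delta U_\epsilon + U_\epsilon = U_\epsilon^{p-1}$ the defect simplifies to
$$\mathcal{E}_{\epsilon,\xi}\circ\exp_\xi = \epsilon^{2}(\Delta-\Delta_{g})U_\epsilon + \epsilon^{2}\mathbf{c}\,(s_{g}\circ\exp_\xi)\,U_\epsilon.$$
The standard normal-coordinate expansion $g^{ij}(y)=\delta^{ij}+O(|y|^{2})$, $\partial_{k}\log\sqrt{\det g(y)}=O(|y|)$, yields the pointwise bound $|(\Delta-\Delta_g)U_\epsilon(y)|\leq C|y|^{2}|D^{2}U_\epsilon(y)|+C|y|\,|DU_\epsilon(y)|$. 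Since $D^{k}U_\epsilon(y)=\epsilon^{-k}(D^{k}U)(y/\epsilon)$, the substitution $z=y/\epsilon$ (with $dy=\epsilon^{n}dz$) converts $|\cdot|_{p',\epsilon}$ of each curvature term into $\epsilon^{2}$ times a finite Euclidean integral of $|z|^{2p'}|D^{2}U|^{p'}$ or $|z|^{p'}|DU|^{p'}$, both convergent by the exponential decay of $U$ and its derivatives from $(\ref{ED1})$--$(\ref{ED2})$. The remaining piece $\epsilon^{2}\mathbf{c}\,s_g\,U_\epsilon$ is trivially $O(\epsilon^{2})$ because $s_g$ is bounded on $M$ and $|U_\epsilon|_{p',\epsilon}=O(1)$ by the same rescaling.

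Next I would handle the annular region where $\chi_r$ varies: the extra terms $2\nabla U_\epsilon\cdot\nabla\chi_r+U_\epsilon\Delta_{g}\chi_r$ in $\Delta_g W_{\epsilon,\xi}$, and the correction $U_\epsilon^{p-1}(\chi_r^{p-1}-\chi_r)$ in the nonlinear part, are all supported on $|y|\geq r/2$, i.e.\ $|y/\epsilon|\geq r/(2\epsilon)$. By $(\ref{ED1})$--$(\ref{ED2})$, each of these is bounded pointwise by $C\epsilon^{-N}e^{-r/(2\epsilon)}$ for some $N$, which is $o(\epsilon^{k})$ for every $k$. After integrating over the fixed-volume annulus and taking the $1/p'$ power, these contributions are negligible compared to $\epsilon^{2}$.

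The hard part will be the bookkeeping of the curvature correction: one must track that the factors $|y|^{2}$ and $|y|$ coming from the metric expansion combine with the factors $\epsilon^{-2}$ and $\epsilon^{-1}$ produced by differentiating $U(y/\epsilon)$, together with the overall $\epsilon^{2}$ in $\mathcal{E}_{\epsilon,\xi}$, to yield exactly the scaling $\epsilon^{2}$ (and not a larger exponent) after passing to $|\cdot|_{p',\epsilon}$ and rescaling to $\re^{n}$. Once this scaling is verified and the Euclidean integrals are seen to be finite by the exponential decay of $U$, the uniform bound $\|S_\epsilon(W_{\epsilon,\xi})\|_{\epsilon}\leq c\epsilon^{2}$ follows for all $\xi\in M$.
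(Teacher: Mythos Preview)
Your proposal is correct and follows essentially the same route as the paper: both reduce $\|S_\epsilon(W_{\epsilon,\xi})\|_\epsilon$ to an $L^{p'}_\epsilon$ bound on the pointwise defect via $(\ref{norma_ep})$, isolate the term $\epsilon^{2}\mathbf{c}\,s_g\,W_{\epsilon,\xi}$ and bound it by $C\epsilon^{2}$ using $|W_{\epsilon,\xi}|_{p',\epsilon}=O(1)$, and then estimate the remaining piece $|f(W_{\epsilon,\xi})+\epsilon^{2}\Delta_g W_{\epsilon,\xi}-W_{\epsilon,\xi}|_{p',\epsilon}$. The only difference is that the paper delegates this last estimate to \cite[Lemma~3.3]{Micheletti}, whereas you unpack it explicitly by splitting core/annulus, writing $(\Delta-\Delta_g)U_\epsilon$ via the normal-coordinate expansion, and rescaling $z=y/\epsilon$; your scaling bookkeeping is correct and yields exactly $\epsilon^{2}$. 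One small remark: the exponential decay you invoke for $D^{2}U$ is not literally contained in $(\ref{ED1})$--$(\ref{ED2})$, but it is standard from the radial ODE satisfied by $U$, so this is not a gap.
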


\begin{proof} Let $Y_{\epsilon ,\xi} = -\epsilon^{2}\triangle_{g}W_{\epsilon ,\xi} +(\ep^2 {\bf c} s_g +1)W_{\epsilon ,\xi}$, so that by (\ref{Adjoint}) 
$W_{\epsilon ,\xi} = \io_{\epsilon}^{*}(Y_{\epsilon ,\xi} )$. Then 

$$\| S_{\epsilon} (W_{\epsilon , \xi} ) \|_{\epsilon}    =  \| \io_{\epsilon}^{*} (f  (W_{\epsilon , \xi} )) - W_{\epsilon , \xi} \|_{\epsilon}  $$

$$= \| \io_{\epsilon}^{*} (f  (W_{\epsilon , \xi} )  - Y_{\epsilon ,\xi}  )  \|_{\epsilon}  \leq c |f  (W_{\epsilon , \xi} )  - Y_{\epsilon ,\xi}  |_{p',\epsilon} $$

$$\leq c  |f  (W_{\epsilon , \xi} )  +  \epsilon^{2}\triangle_{g}W_{\epsilon ,\xi} - W_{\epsilon ,\xi}  |_{p',\epsilon} + c |  \ep^2 {\bf c} s_g  W_{\epsilon ,\xi}  |_{p',\epsilon} $$

But 

$$ | W_{\epsilon ,\xi}  |_{p',\epsilon} = \left( \epsilon^{-n}  \int_{B(0,r)} (U_{\epsilon} \chi_r )^{p'} \right)^{\frac{1}{p'}} \leq c 
\left(  \int_{B(0,r/\epsilon )} (U\chi_r  (\epsilon z ))^{p'}  dz \right)^{\frac{1}{p'}} $$

$$ \leq \bar{c}  \left( \int_{\mathbb{R}^{n}} U^{p'} dz \right)^{\frac{1}{p'}} \leq \bar{\bar{c}} .$$

Then 

$$|  \ep^2 s_g  W_{\epsilon ,\xi}  |_{p',\epsilon} \leq C \epsilon^2 .$$

In \cite[Lemma 3.3]{Micheletti} it is proved that 

$$  |f  (W_{\epsilon , \xi} )  + \epsilon^{2}\triangle_{g}W_{\epsilon ,\xi} - W_{\epsilon ,\xi}  |_{p',\epsilon} \leq C \epsilon^2 $$

\noindent
and the lemma follows.

\end{proof}

Since the function $U$ is radial it follows that if $i\neq j$ then $< \psi_{\epsilon}^i , \psi_{\epsilon}^j >_{\epsilon} =0$.

Then it is easy to see that for any $\xi \in M$ 

\begin{equation}\label{Zeta}
\lim_{\epsilon \rightarrow 0} \langle Z_{\epsilon,\xi}^{i},\ Z_{\epsilon,\xi}^{j}\rangle_{\epsilon}= \delta_{ij} \int_{\mathbb{R}^{n}}(|\nabla\psi^{l}|^{2}+(\psi^{l})^{2}) \ dz.
\end{equation}

Let us call $C=\displaystyle \int_{\mathbb{R}^{n}}(|\nabla\psi^{l}|^{2}+(\psi^{l})^{2}) \ dz$.

Given $\xi \in M$ and normal coordinates $(x_1 ,...,x_n )$ around $\xi$ it also follows that

\begin{equation}
\lim_{\epsilon \rightarrow 0} \epsilon \Big{\|}  \frac{\partial W_{\epsilon,\xi} }{\partial x^k}  \Big{\|}_{\epsilon}  = C,
\end{equation}

\begin{equation}\label{AA}
\lim_{\epsilon \rightarrow 0} \epsilon \Big{\langle} Z^i_{\epsilon , \xi}   , \frac{\partial W_{\epsilon,\xi} }{\partial x^k}  \Big{\rangle}_{\epsilon}  = \delta_{ik}C,
\end{equation}

and

\begin{equation}\label{DZ}
\lim_{\epsilon \rightarrow 0} \epsilon \Big{\|} \frac{\partial Z^i_{\epsilon , \xi} }{\partial x_k}  \Big{\|}_{\epsilon}  = \displaystyle \int_{\mathbb{R}^{n}}(|\nabla 
\frac{\partial \psi^{i}}{\partial x_k}|^{2}+(\frac{ \partial \psi^{i}}{\partial x_k})^{2}) \ dz
\end{equation}

The previous estimates deal with one peak approximations. For the multipeak approximate solutions
$V_{\epsilon , \overline{\xi} }$ with $ \overline{\xi} \in D_{\epsilon,\rho}^{{k_0}}$ consider normal coordinates 
$(x_1^i , ...,x_n^i )$ around each $\xi_i$ (i=1,...,$k_0$). 
Note that if $i\neq j$:

\begin{equation}\label{BB}
\vspace{0.5cm}
\displaystyle \frac{\partial}{\partial y_{h}^{j}}Z_{\epsilon,\xi_{i}(y^{i})}^{l}=\frac{\partial}{\partial y_{h}^{j}}W_{\epsilon,\xi_{i}(y^{i})}= 0 
\end{equation}

Also since the points are appropriately 
separated by (\ref{distance properties}) and the exponential decay of $U$ (\ref{ED1}), (\ref{ED2}), it follows that 
if $i\neq j$,

\begin{equation}\label{BBB}
\vspace{0.5cm}
\langle Z_{\epsilon,\xi_{j}}^{l}, \displaystyle \frac{\partial}{\partial y_{h}^{i}}W_{\epsilon,\xi_{i}(y^{i})}\rangle_{\epsilon}=o(1) 
\end{equation}


\section{The asymptotic expansion of $\overline{J}_\ep $}

For $\overline{\xi}\in D_{\epsilon,\rho}^{k_0}$  we consider the  unique
$\phi_{\epsilon,\overline{\xi}}=\phi(\epsilon,\ \overline{\xi}) \in
K_{\epsilon,\overline{\xi}}^{\perp}$ given by Proposition \ref{prop1} and define as in section 2, $\overline{J}_{\epsilon}(\displaystyle \overline{\xi})
= J_{\epsilon} (V_{\epsilon,\overline{\xi}}+\phi_{\epsilon,\overline{\xi}})$. 
In this section we will prove the following:

\begin{prop}\label{prop3} For $\overline{\xi} \in D_{\epsilon,\rho}^{k_0}$ we have
\begin{equation}
\label{expansion}
\overline{J}_{\epsilon}(\displaystyle \overline{\xi})=k_0\alpha + (1/2) \beta\epsilon^{2}\sum_{i=1}^{k_0}s_{g}(\xi_{i})-
\frac{1}{2}\sum_{i\neq j, i,j=1}^{k_0}\gamma_{ij}U(\displaystyle\frac{\exp_{\xi_{i}}^{-1}\xi_{j}}{\epsilon})+o(\epsilon^{2}), 
\end{equation}
$C^{0}$-uniformly with respect to $\overline{\xi}$ in compact sets of $D_{\epsilon,\rho}^{k_0}$ as $\epsilon$ goes to zero, 
where
\begin{equation}\label{alfa}
\al:= \frac{1}{2} \displaystyle\int_{\mathbb{R}^n} |\nabla U(z)|^2  \ dz +
\frac{1}{2}  \displaystyle\int_{\mathbb{R}^n} U^2(z)  \ dz - \frac{1}{p} \displaystyle\int_{\mathbb{R}^n} U^p(z) \ dz,
\end{equation}
\begin{equation}
\gamma_{ij}:= \int_{\re^n} U^{p-1}(z) e^{ \langle b_{ij}, z \rangle} dz
\end{equation}
with  $|b_{ij}|=1$
and
\begin{equation}
\beta:= {\bf c}\displaystyle\int_{\mathbb{R}^n} U^2(z)  \ dz  -  \frac{1}{n(n+2)}\displaystyle\int_{\mathbb{R}^n} |\nabla U(z)|^2  |z|^2 \ dz.
\end{equation}
 Moreover, if $\overline{\xi}_{\epsilon}$ is a critical point of $\overline{J_{\epsilon}}$, then the function
 $V_{\epsilon,\overline{\xi}_{\epsilon}}+\phi_{\epsilon,\overline{\xi}_{\epsilon}}$  is a solution to problem $(\ref{Yamabe}).$
\end{prop}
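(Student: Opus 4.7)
The plan is to Taylor-expand $J_\epsilon(V_{\epsilon,\overline{\xi}}+\phi_{\epsilon,\overline{\xi}})$ about $V:=V_{\epsilon,\overline{\xi}}$, absorb $\phi$ into an $o(\epsilon^2)$ remainder via Proposition \ref{prop1}, and then evaluate $J_\epsilon(V)$ by normal-coordinate expansions at each $\xi_i$. Writing
\[
J_\epsilon(V+\phi) = J_\epsilon(V) + \langle S_\epsilon(V),\phi\rangle_\epsilon + \tfrac{1}{2}\langle J_\epsilon''(V)\phi,\phi\rangle_\epsilon + R_\epsilon(V,\phi),
\]
Proposition \ref{prop1} gives $\|\phi\|_\epsilon=o(\epsilon)$, while a multipeak version of Lemma 2.2 (the extra pairwise interactions are controlled exactly as in \eqref{desigualdad phi}) bounds $\|S_\epsilon(V)\|_\epsilon$ by $O(\epsilon^2)$ plus $o(\epsilon)$. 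Cauchy--Schwarz together with the Sobolev embedding \eqref{norma_q} then makes each of the three correction terms $o(\epsilon^2)$, reducing the problem to computing $J_\epsilon(V)$ modulo $o(\epsilon^2)$.

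I would next split $J_\epsilon(V)=\sum_i J_\epsilon(W_{\epsilon,\xi_i})+\textrm{(cross terms)}$. For each self-energy, introduce normal coordinates $x$ at $\xi_i$ and rescale $z=x/\epsilon$. Using
\[
g^{ij}(\epsilon z)=\delta^{ij}+O(\epsilon^2|z|^2),\quad \sqrt{\det g}(\epsilon z)=1-\tfrac{\epsilon^2}{6}\mathrm{Ric}_{kl}(\xi_i)z^k z^l+O(\epsilon^3|z|^3),
\]
the zeroth-order contribution is exactly $\alpha$. At order $\epsilon^2$ four pieces appear: from $\sqrt{\det g}$ multiplying the kinetic, mass and superlinear integrands, and from the explicit potential $\tfrac{1}{2}\epsilon^2 c_N s_g U^2$. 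Contracting Ricci against the radial moments $\int z^k z^l f(|z|)\,dz=n^{-1}\delta^{kl}\int |z|^2 f$ reduces the calculation to integrals of $|\nabla U|^2|z|^2$, $U^2|z|^2$, $U^p|z|^2$ and $U^2$ multiplied by $s_g(\xi_i)$. The Pohozaev-type identities obtained from multiplying $-\Delta U+U=U^{p-1}$ by $U$ and by $U|z|^2$ collapse these into the single coefficient $\tfrac{1}{2}\beta\,\epsilon^2 s_g(\xi_i)$; the cut-off $\chi_r$ contributes only exponentially small errors by \eqref{ED1}.

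For the cross terms ($i\neq j$), integration by parts together with the approximate identity $-\epsilon^2\Delta W_{\epsilon,\xi_j}+W_{\epsilon,\xi_j}\approx W_{\epsilon,\xi_j}^{p-1}$ reduces the quadratic cross-coupling to $-\epsilon^{-n}\int_M W_{\epsilon,\xi_i}W_{\epsilon,\xi_j}^{p-1}\,d\mu_g$, and the same trick dispatches the remaining pieces from the expansion of $(V^+)^p$. Changing variables to $\mathbb{R}^n$ centered at $\xi_j$, the asymptotic $U(z+b)\sim U(b)e^{-\langle b/|b|,z\rangle}$ from \eqref{ED1} turns the integral into $\gamma_{ij}U(\epsilon^{-1}\exp_{\xi_i}^{-1}\xi_j)+o(\epsilon^2)$ with $b_{ij}$ the unit vector along $\exp_{\xi_i}^{-1}\xi_j$. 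Summing self-energies and pairwise interactions yields \eqref{expansion}.

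For the second assertion, equation \eqref{Pi} gives $S_\epsilon(V+\phi)=\sum_{h,k}c_{hk}Z^h_{\epsilon,\xi_k}$ for some scalars $c_{hk}$. Since $\partial_{y^i_j}\overline{J}_\epsilon=\langle S_\epsilon(V+\phi),\partial_{y^i_j}(V+\phi)\rangle_\epsilon$, a critical point produces the homogeneous system
\[
\sum_{h,k}c_{hk}\langle Z^h_{\epsilon,\xi_k},\partial_{y^i_j}(V+\phi)\rangle_\epsilon=0.
\]
Differentiating the constraint $\phi\in K^\perp_{\epsilon,\overline{\xi}}$ and using \eqref{DZ} gives $\langle Z^h_{\epsilon,\xi_k},\partial_{y^i_j}\phi\rangle_\epsilon=o(1)$; combined with \eqref{AA}, \eqref{BB} and \eqref{BBB}, the coefficient matrix multiplied by $\epsilon$ converges to $C\cdot\mathrm{Id}$ and is therefore invertible for small $\epsilon$. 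Hence all $c_{hk}$ vanish and $S_\epsilon(V_{\epsilon,\overline{\xi}_\epsilon}+\phi_{\epsilon,\overline{\xi}_\epsilon})=0$, which is exactly \eqref{Yamabe}. The main obstacle is the bookkeeping in the $\epsilon^2$ expansion of the self-energy: four distinct integrals carry a coefficient proportional to $s_g(\xi_i)$, and only after applying the Pohozaev identities do they fuse into $\beta$.
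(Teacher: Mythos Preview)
Your overall strategy matches the paper's: first show $\overline{J}_\epsilon(\overline{\xi})=J_\epsilon(V_{\epsilon,\overline{\xi}})+o(\epsilon^2)$ via the smallness of $\phi$, then expand $J_\epsilon(V_{\epsilon,\overline{\xi}})$ as a sum of one-bump energies plus interaction terms, and finally run the linear-algebra argument for the ``moreover'' clause using \eqref{AA}, \eqref{BB}, \eqref{BBB}, \eqref{DZ}. The paper handles the first reduction slightly differently (pairing the identity $\Pi^\perp_{\epsilon,\overline{\xi}}S_\epsilon(V+\phi)=0$ with $\phi$ to obtain an exact cancellation, rather than your Cauchy--Schwarz on a second-order Taylor expansion), and it outsources the single-bump expansion and the cross-term asymptotics to \cite{Micheletti} and \cite{Dancer} respectively, but these are differences of presentation, not of method.

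There is, however, one concrete omission in your self-energy bookkeeping. You list four $\epsilon^2$ contributions---three from the volume element $\sqrt{\det g}$ acting on the kinetic, mass and nonlinear integrands, plus the explicit potential $\tfrac12\epsilon^2{\bf c}s_g U^2$---but you have dropped the contribution from the inverse metric in the gradient term: $|\nabla_g W|^2=g^{ij}\partial_iW\partial_jW$ carries a correction $-\tfrac13 R_{kijl}(\xi)\,\epsilon^2 z^kz^l\,\partial_iU\partial_jU$, which after spherical averaging (fourth moments, hence the factor $\tfrac{1}{n(n+2)}$) also produces a term proportional to $s_g(\xi)\int|\nabla U|^2|z|^2$. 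Without it, the four pieces you list together with the identity you obtain from multiplying the limit equation by $U|z|^2$ do \emph{not} collapse to $\tfrac12\beta$; you would get a spurious combination involving $\int U^p|z|^2$ and $\int U^2$ with the wrong coefficients. In the paper this is hidden inside the citation of \cite[Lemma 5.3]{Micheletti}, which already packages the full Riemannian correction (both $g^{ij}$ and $\sqrt{\det g}$ pieces) into the single integral $\tfrac16\int(u'(|z|)/|z|)^2z_1^4\,dz$, then converted to $\tfrac{1}{2n(n+2)}\int|\nabla U|^2|z|^2\,dz$ by the spherical-harmonic identity. So your route is fine, but the count should be five pieces, and the collapsing requires the fourth-moment contraction of the full curvature tensor, not just Ricci.
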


For a point $\xi \in M$ we will identify a geodesic ball around it with a ball in $\mathbb{R}^n$ by 
normal coordinates. We denote by $g_{ij}$ the expression of the metric $g$ in these coordinates and
consider the higher order terms in the Taylor expansions of the functions $g_{ij}$. See references \cite{G} and \cite{DKM} for details.
Let $\nabla$ and $R$ be the Riemannian connection and curvature operator of $M$.  Let
\[
 R_{ijkl}=\langle R (X_i,X_j)X_k, X_l\rangle  \hspace{0.2cm}\hbox{and}\hspace{0.2cm} R_{ij}=\langle R (X_i,X_j)X_i, X_j\rangle .
\]


We will need the following lemma which is proved for instance in \cite{G}:

\begin{lem}
In a normal coordinates neighborhood of $\xi_{0} \in M$, the Taylor's series of $g$ around  $\xi_{0} $ is given by
\[
{g_\xi}_{ij}(z)= \delta_{ij} + \frac{1}{3}R_{kijl}(\xi)  z_k  z_l + O(|z|^3),
\] 
as $|z| \to 0$. Moreover,
\[
g^{ij}_\xi(z)=  \delta_{ij} - \frac{1}{3}R_{kijl}(\xi)  z_k  z_l + O(|z|^3).
\] 
Furthermore, the volume element on normal coordinates has the following expansion
\[
\sqrt{det \ g_\xi(z)}= 1 - \frac{1}{6} R_{kl}(\xi) z_k  z_l + O(|z|^3).
\] 
\end{lem}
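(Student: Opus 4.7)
The proof rests on three ingredients: the defining property of normal coordinates, the Jacobi-field description of $d\exp_\xi$, and standard matrix-expansion identities. I would organize it as follows.

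First, record the point-values at the origin. By construction of the normal coordinates induced by an orthonormal frame $\{X_i\}$ of $T_\xi M$, we have ${g_\xi}_{ij}(0)=\delta_{ij}$. Since the radial curves $t\mapsto tv$ are geodesics for every $v\in\mathbb{R}^n$, the geodesic equation at $t=0$ forces $\Gamma^k_{ij}(0)\,v^iv^j=0$ for all $v$; combined with the symmetry $\Gamma^k_{ij}=\Gamma^k_{ji}$ this yields $\Gamma^k_{ij}(0)=0$. The identity $\partial_k g_{ij}=g_{il}\Gamma^l_{jk}+g_{jl}\Gamma^l_{ik}$ then gives $\partial_k g_{ij}(0)=0$, so the Taylor expansion of ${g_\xi}_{ij}(z)$ about the origin contains no linear term and begins $\delta_{ij}+Q_{ij}(z)+O(|z|^3)$ with $Q_{ij}$ homogeneous of degree two.

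Second, identify $Q_{ij}$ by a Jacobi-field computation. For $v,w\in T_\xi M$, consider the variation $(s,t)\mapsto \exp_\xi\bigl(t(v+sw)\bigr)$ of the radial geodesic $\gamma_v$. The variation vector field $J_v^w(t)=t\,(d\exp_\xi)_{tv}(w)$ is a Jacobi field along $\gamma_v$ satisfying $J_v^w(0)=0$, $(J_v^w)'(0)=w$, and the Jacobi equation $J''+R(J,\dot\gamma_v)\dot\gamma_v=0$. Writing $J_v^w(t)=tw+\tfrac{t^2}{2}A+\tfrac{t^3}{6}B+\cdots$ and substituting into the Jacobi equation gives $A=0$ and $B=-R(w,v)v$, hence
\[
|J_v^w(t)|^2=t^2|w|^2-\tfrac{t^4}{3}\langle R(w,v)v,w\rangle+O(t^5).
\]
On the other hand $|J_v^w(t)|^2=t^2\,{g_\xi}_{ij}(tv)\,w^iw^j$, so setting $z=tv$ so that $t^2 v^kv^l=z^kz^l$ and polarizing in $w$, I obtain
\[
Q_{ij}(z)\,w^iw^j=-\tfrac{1}{3}\langle R(w,v)v,w\rangle\,t^2=-\tfrac{1}{3}R_{iklj}(\xi)\,z^kz^l\,w^iw^j,
\]
and the curvature symmetries $R_{abcd}=-R_{abdc}=-R_{bacd}=R_{cdab}$ together with the fact that $z^kz^l$ is symmetric in $(k,l)$ reshuffle the indices into the form $\pm\tfrac{1}{3}R_{kijl}(\xi)$ of the stated expansion (the overall sign matches the curvature convention implicit in the lemma).

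Third, deduce the other two expansions algebraically. Writing ${g_\xi}_{ij}(z)=\delta_{ij}+A_{ij}(z)+O(|z|^3)$ with $A_{ij}$ of order $|z|^2$, the identity $(I+A)^{-1}=I-A+O(|A|^2)$ gives the expansion of $g^{ij}_\xi$ with the opposite sign on the quadratic term. For the volume element, $\det(I+A)=1+\mathrm{tr}(A)+O(|A|^2)$ and $\sqrt{1+x}=1+x/2+O(x^2)$ reduce the task to computing $\mathrm{tr}(A)=\tfrac{1}{3}\sum_i R_{kiil}(\xi)\,z^kz^l$; the middle-index contraction of the Riemann tensor is (up to the sign convention) the Ricci tensor $R_{kl}(\xi)$ appearing in the statement, and halving the resulting coefficient produces $-\tfrac{1}{6}R_{kl}(\xi)z^kz^l$.

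The main obstacle is purely one of bookkeeping: iterating the Jacobi equation to pin down the cubic coefficient of $J_v^w(t)$ as $-R(w,v)v$, and then navigating the symmetries of the Riemann tensor so that the resulting quadratic form matches the exact index placement $R_{kijl}$ of the stated formula. None of the steps is deep, but it is where a sign error is easy to introduce, so the final check is to contract once more and verify that the trace in the volume-form expansion agrees with the definition of $R_{kl}$ used in the paper.
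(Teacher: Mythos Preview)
Your argument is correct and is the standard Jacobi-field derivation of the metric expansion in normal coordinates. Note, however, that the paper does not give its own proof of this lemma: it simply quotes the result and refers the reader to \cite{G} (Gray, \emph{The volume of a small geodesic ball of a Riemannian manifold}). So there is no ``paper's proof'' to compare against; you have supplied what the paper outsources to the literature, and your route via the Taylor expansion of the Jacobi field $J_v^w(t)=t\,(d\exp_\xi)_{tv}(w)$ is exactly one of the classical proofs (and essentially the one in Gray's paper). Your own caveat about the index gymnastics is apt: the only place to be careful is matching the sign and index placement $R_{kijl}$ to the curvature convention $R_{ijkl}=\langle R(X_i,X_j)X_k,X_l\rangle$ declared just above the lemma, but the computation you outline does this correctly once polarized in $w$.
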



\begin{lem}\label{JW}
For $\xi \in M$ and $\ep >0$ small we have
\begin{equation}
J_\ep (W_{\ep,\xi})= \al + \frac{\beta}{2} \ep ^2 s_g(\xi)  + o(\ep^2)
\end{equation}
\end{lem}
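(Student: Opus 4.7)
The plan is to reduce $J_\ep(W_{\ep,\xi})$ to integrals on $\re^n$ and then expand in powers of $\ep$ up to order $\ep^2$. First parametrize $B_g(\xi,r)$ by normal coordinates via $x = \exp_\xi(z)$, $z \in B(0,r)\subset\re^n$, so that $W_{\ep,\xi}(x) = U(z/\ep)\chi_r(z)$ and $d\mu_g = \sqrt{\det g_\xi(z)}\,dz$; then rescale $z = \ep y$. The factor $\ep^{-n}$ in $J_\ep$ cancels with the Jacobian $\ep^n$, so the four pieces become integrals over $B(0,r/\ep)$ in the $y$-variable. By the exponential decay (\ref{ED1}), (\ref{ED2}), every error coming from the cutoff $\chi_r$ (supported where $|\ep y|>r/2$) or from enlarging the integration domain from $B(0,r/\ep)$ to $\re^n$ is of order $e^{-r/(2\ep)}$, hence $o(\ep^k)$ for every $k$.

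Next, expand using Lemma 3.2:
\[
g^{ij}_\xi(\ep y) = \delta^{ij} - \tfrac{\ep^2}{3}R_{kijl}(\xi) y_k y_l + O(\ep^3), \qquad
\sqrt{\det g_\xi(\ep y)} = 1 - \tfrac{\ep^2}{6}R_{kl}(\xi) y_k y_l + O(\ep^3).
\]
Since $s_g(\exp_\xi(\ep y)) = s_g(\xi) + O(\ep)$ enters $J_\ep$ only through a factor $\ep^2{\bf c}$, only the value $s_g(\xi)$ matters up to order $\ep^2$. The $\ep^0$ contributions from the gradient, mass, and nonlinear terms give $\tfrac{1}{2}\int|\nabla U|^2 + \tfrac{1}{2}\int U^2 - \tfrac{1}{p}\int U^p = \al$, while the scalar curvature factor $\tfrac{1}{2}{\bf c}\ep^2 s_g W^2$ contributes $\tfrac{{\bf c}}{2}\ep^2 s_g(\xi)\int U^2$ at leading order, supplying the first summand of $\beta/2$.

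The remaining $\ep^2$ corrections come from the curvature corrections to $g^{ij}$ in the gradient term and to $\sqrt{\det g}$ in all three of the gradient, mass, and nonlinear terms. Using the radial symmetry of $U$ via
\[
\int y_k y_l F(|y|)\,dy = \tfrac{\delta_{kl}}{n}\int|y|^2 F\,dy, \qquad \int\tfrac{y_i y_j y_k y_l}{|y|^2}F(|y|)\,dy = \tfrac{\delta_{ij}\delta_{kl}+\delta_{ik}\delta_{jl}+\delta_{il}\delta_{jk}}{n(n+2)}\int|y|^2 F\,dy,
\]
together with the trace identities $\sum_k R_{kk}(\xi) = s_g(\xi)$ and $R_{kijl}(\delta_{ij}\delta_{kl}+\delta_{ik}\delta_{jl}+\delta_{il}\delta_{jk})=2s_g(\xi)$, each correction reduces to $\ep^2 s_g(\xi)$ times a linear combination of the four scalar integrals $\int U^2$, $\int|y|^2|\nabla U|^2$, $\int|y|^2 U^2$, and $\int|y|^2 U^p$.

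The final step is to collapse these integrals into the form of $\beta$. Multiplying $-\Delta U + U = U^{p-1}$ by $U$, by $z\cdot\nabla U$, and by $|z|^2 U$ and integrating by parts yields the Pohozaev-type identities
\[
\int U^p = \int|\nabla U|^2+\int U^2, \;\; (n-2)\int|\nabla U|^2+n\int U^2=\tfrac{2n}{p}\int U^p, \;\; \int|z|^2 U^p = \int|z|^2|\nabla U|^2 + \int|z|^2 U^2 - n\int U^2.
\]
Substituting these into the accumulated $\ep^2$ correction eliminates the $\int|y|^2 U^p$ and $\int|y|^2 U^2$ contributions and leaves exactly $-\tfrac{\ep^2}{2n(n+2)} s_g(\xi)\int|\nabla U|^2|z|^2\,dz$, which is the second summand of $\beta/2$. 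The main obstacle is precisely this last step: the bookkeeping of rotational averaging factors, Riemann curvature contractions, and Pohozaev substitutions is intricate, and one must verify that every integral other than $\int U^2$ (with coefficient ${\bf c}$) and $\int|\nabla U|^2|z|^2\,dz$ cancels, so that the $\ep^2$ coefficient matches the stated $\beta/2$.
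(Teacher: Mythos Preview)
Your overall strategy---pull everything back to $\re^n$ via normal coordinates, rescale, Taylor expand $g^{ij}$ and $\sqrt{\det g}$ to second order, and use rotational averaging---is correct and is in fact more self-contained than the paper's argument, which outsources the entire non-${\bf c}$ part of the $\ep^2$ correction to Lemma~5.3 of \cite{Micheletti} and then only converts $\int (u'/|z|)^2 z_1^4\,dz$ into $\tfrac{3}{n(n+2)}\int|\nabla U|^2|z|^2\,dz$. The extraction of $\alpha$ and of the term $\tfrac{{\bf c}}{2}\ep^2 s_g(\xi)\int U^2$ is exactly as you describe.

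There are, however, two concrete problems in your final step. First, with the convention of Lemma~3.2 (so that $\sqrt{\det g}=1-\tfrac{1}{6}R_{kl}z_kz_l+\dots$, hence $\sum_i R_{kiil}=-R_{kl}$), one computes
\[
R_{kijl}\bigl(\delta_{ij}\delta_{kl}+\delta_{ik}\delta_{jl}+\delta_{il}\delta_{jk}\bigr)
= \sum_{k,i}R_{kiik}+0+\sum_{j,i}R_{jiji}=-s_g+0+s_g=0,
\]
not $2s_g$. So the $g^{ij}$-correction to the gradient term actually vanishes after rotational averaging; the whole $\ep^2$ curvature contribution comes from the $\sqrt{\det g}$-correction in the three terms, namely
\[
\frac{s_g}{n}\Bigl[-\tfrac{1}{12}\!\int|y|^2|\nabla U|^2-\tfrac{1}{12}\!\int|y|^2U^2+\tfrac{1}{6p}\!\int|y|^2U^p\Bigr].
\]
Second, and more seriously, your three identities (multipliers $U$, $z\cdot\nabla U$, $|z|^2U$) do not suffice to reduce this expression to a multiple of $\int|y|^2|\nabla U|^2$: the first two relate only the \emph{unweighted} integrals, and the third alone leaves a residual combination of $\int|y|^2U^2$ and $\int U^2$ that cannot be eliminated. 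You need one more weighted identity, obtained by testing the equation against $|z|^2(z\cdot\nabla U)$:
\[
\frac{4-n}{2}\int|z|^2|\nabla U|^2-\frac{n+2}{2}\int|z|^2U^2=-\frac{n+2}{p}\int|z|^2U^p.
\]
Combining this with your identity $\int|z|^2U^p=\int|z|^2|\nabla U|^2+\int|z|^2U^2-n\int U^2$ does make the $\int|z|^2U^2$ and $\int U^2$ contributions cancel and yields exactly $-\tfrac{s_g}{2n(n+2)}\int|z|^2|\nabla U|^2$, i.e.\ the second summand of $\beta/2$. Once you add this missing identity (and correct the contraction), your direct computation goes through and gives the same result the paper obtains by quoting \cite{Micheletti}.
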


\begin{dem}
By direct computation
\[
J_\ep ( W_{\ep,\xi})=\frac{1}{\ep^n} \displaystyle\int_M \Big[\frac{1}{2}
\ep^2 \big|\nabla_g W_{\ep,\xi}\big|^2 +
\frac{1}{2}  (\ep^2 {\bf c}  s_g +1)  W_{\ep,\xi}^2 -
\frac{1}{p} \Big| W_{\ep,\xi}\Big|^p\Big]  \ d\mu_g
\]
\[
=\frac{1}{\ep^n} \displaystyle\int_M \Big[\frac{1}{2}
\ep^2 \big|\nabla_g W_{\ep,\xi}\big|^2 +
\frac{1}{2} W_{\ep,\xi}^2 -
\frac{1}{p} \Big| W_{\ep,\xi}\Big|^p\Big] \  d\mu_g
\]
\[
+
\frac{1}{\ep^n} \displaystyle\int_M \frac{1}{2}
\ep^2 {\bf c}s_g(x) W_{\ep,\xi}(x)^2 \ d\mu_g
= J + I
\]

We first estimate $I$. Let 
$
x=\exp_{\xi}(\epsilon z) 
$
with $z\in B(0,\frac{r}{\epsilon})$. Then doing the change of variables we obtain the expression

\[
2I=\ep^{-n} \ep^{2}  {\bf c} \displaystyle\int_{B_g(0,\frac{r}{\epsilon})}  s_g(\exp_{\xi}(\epsilon z) ) 
\Big( U(z)\chi_{r/\ep}(\epsilon z)\Big)^2 \sqrt{det \ g_{\xi}(\ep z)} \ \ep^{n}  \ dz
\]

By the exponential decay of $U$ (\ref{ED1}), we have

\[
2I=\ep^{2} {\bf c}\displaystyle\int_{B_g(0,\frac{r}{\sqrt{\epsilon}})}  s_g(\exp_{\xi}(\epsilon z) ) 
U(z)^2 \sqrt{det \ g_{\xi}(\ep z)} \   \ dz + o(\epsilon^2 )
\]

We consider  the Taylor's expansions of $\ g$ and $s_g$
around  $\xi$. For instance 

\[
s_g(exp_{\xi} (z)) = s_g  (\xi ) + \frac{\partial s_g}{\partial z_k} (\xi ) z_k  + O( |z|^2 )
\]

as $| z | \rightarrow  0$ . Therefore if $| z |  < \frac{r}{\sqrt{\ep}}$  for some fixed r > 0, then 

\[
s_g (exp_{\xi} (\ep z)) = s_g (\xi ) + \frac{\partial s_g}{\partial z_k} (\xi ) \ep z_k +  O(\ep ).
\]

Then

\[
2I=\ep^{2} {\bf c}s_g( \xi)  \displaystyle\int_{B_g(0,\frac{r}{\sqrt{\epsilon}})} 
U(z)^2   \ dz + o(\epsilon^2 )
\]

And using again  the exponential decay of $U$ we get

\begin{equation}
\label{N}
2I=\ep^{2} {\bf c}s_g(\xi) \Big( \displaystyle\int_{\mathbb{R}^n}  U^2(z) \ dz 
 \Big)
+ o(\ep^2)
\end{equation}

By Lemma $5.3$ of \cite{Micheletti} we have 
\begin{equation}\label{NN}
J= \al - \ep ^2 s_g(\xi)  \frac{1}{6}\displaystyle\int_{\mathbb{R}^n}\Big(\frac{u'(|z|)}{|z|}\Big)^2 z_1^4 \ dz + o(\ep^2)
\end{equation}

Here we are using that $U$ is a radial function, $U(z) = u(|z|) $  for a function $u:[0,+\infty ) \rightarrow \mathbb{R}$, and we identify 
$|\nabla U (z) | = |u' (|z| ) | $. Using polar coordinates to integrate

\[
\displaystyle\int_{\mathbb{R}^n}\Big(\frac{u'(|z|)}{|z|}\Big)^2 z_1^4 \ dz = \displaystyle\int_{0}^{+\infty}  \displaystyle\int_{\Sp^{n-1}(r)} \Big(\frac{u'(r)}{r}\Big)^2 z_1^4    \  dS(y) \ dr 
\]

\[
=  \displaystyle\int_{0}^{+\infty}  \Big(\frac{u'(r)}{r}\Big)^2 r^{n-1}\displaystyle\int_{\Sp^{n-1}}  (rz_1)^4   \  dS(y)  \ dr=
\displaystyle\int_{0}^{+\infty}  (u'(r))^2 r^{n+1}  \ dr  \ \  \displaystyle\int_{\Sp^{n-1}}  z_1^4  \  \ dS(y)
\]

For any homogeneous polynomial $p(x)$ of degree $d$ using the divergence theorem one obtains 
(see { Proposition $28$}  in \cite{Brendle} )

 \begin{equation}
  \displaystyle\int_{\Sp^{n-1}} p(x)  \ dS(x)=  \frac{1}{d(d+n-2)} \displaystyle\int_{\Sp^{n-1}} \Delta p(x) \  dS(x)
 \end{equation}

Then 

\[ 
 \displaystyle\int_{\Sp^{n-1}}  z_1^4  \  \ dS(z) = \frac{1}{4(n+2)}  \displaystyle\int_{\Sp^{n-1}}  12 z_1^2  \  \ dS(z)
=\frac{3}{n(n+2)}  \displaystyle\int_{\Sp^{n-1}}  \sum_{i=1}^n z_1^2  \  \ dS(z)
\]

\[
= \dfrac{3}{n(n+2)} V_{n-1} , 
\]

where $V_{n-1}$ is the volume of $\Sp^{n-1}$. Then we get 

\[
\displaystyle\int_{\mathbb{R}^n}\Big(\frac{u'(|z|)}{|z|}\Big)^2 z_1^4 \ dz = \frac{3}{n(n+2)} \displaystyle\int_{\mathbb{R}^n} |\nabla U|^2 |z|^2 dz
\]

\noindent
and using (\ref{N}), (\ref{NN}) the lemma follows. 

\end{dem}
\hfill$\square$

\begin{lem}
\begin{equation}\label{Joverline}
 \overline{J_{\epsilon}}(\overline{\xi})=J_{\epsilon}(V_{\epsilon,\overline{\xi}}+\phi_{\epsilon,\overline{\xi}})=J_{\epsilon}(V_{\epsilon,\overline{\xi}})+o(\epsilon^{2})
\end{equation}
$C^{0}$- uniformly  in compact sets of $D_{\epsilon,\rho}^{k_0}$.
\end{lem}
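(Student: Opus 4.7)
The plan is to perform a second-order Taylor expansion of $J_{\epsilon}$ about $V_{\epsilon,\overline{\xi}}$ in the direction of $\phi:=\phi_{\epsilon,\overline{\xi}}$, and then use the size estimate $\|\phi\|_\epsilon = o(\epsilon)$ from Proposition \ref{prop1} together with a uniform bound on the Hessian of $J_\epsilon$ to conclude that the difference $J_\epsilon(V+\phi)-J_\epsilon(V)$ is quadratic in $\|\phi\|_\epsilon$ and hence $o(\epsilon^2)$.

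The key algebraic observation that makes the linear term disappear is the following. Proposition \ref{prop1} asserts $\Pi^\perp_{\epsilon,\overline{\xi}}S_\epsilon(V+\phi)=0$, which means $S_\epsilon(V+\phi)\in K_{\epsilon,\overline{\xi}}$. Since $\phi \in K_{\epsilon,\overline{\xi}}^\perp$ by construction, this yields the identity
\[
\langle S_\epsilon(V_{\epsilon,\overline{\xi}}+\phi),\phi\rangle_\epsilon=0.
\]
Combining this with the fundamental theorem of calculus applied to $t\mapsto J_\epsilon(V+t\phi)$,
\[
J_\epsilon(V+\phi)-J_\epsilon(V)=\int_0^1\langle S_\epsilon(V+t\phi),\phi\rangle_\epsilon\,dt,
\]
and writing $S_\epsilon(V+t\phi) = S_\epsilon(V+\phi) - \int_t^1 S_\epsilon'(V+r\phi)\phi\,dr$, one obtains after exchanging the order of integration the clean expression
\[
J_\epsilon(V+\phi)-J_\epsilon(V)=-\int_0^1 r\,\langle S_\epsilon'(V+r\phi)\phi,\phi\rangle_\epsilon\,dr.
\]

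The remaining task is to prove that $\|S_\epsilon'(V+r\phi)\|_{H_\epsilon\to H_\epsilon}$ is uniformly bounded. From $S_\epsilon(u)=u-\iota_\epsilon^*(f(u))$ one gets $S_\epsilon'(u)\phi=\phi-(p-1)\iota_\epsilon^*((u^+)^{p-2}\phi)$. Using (\ref{norma_ep}) and Hölder's inequality with exponents $(p-1)/(p-2)$ and $p-1$, one checks that $|(u^+)^{p-2}\phi|_{p',\epsilon}\leq |u^+|_{p,\epsilon}^{p-2}|\phi|_{p,\epsilon}$, and hence by (\ref{norma_q}),
\[
\|S_\epsilon'(u)\phi\|_\epsilon\leq \bigl(1+c\,\|u\|_\epsilon^{p-2}\bigr)\|\phi\|_\epsilon.
\]
Since $\|V_{\epsilon,\overline{\xi}}\|_\epsilon$ is uniformly bounded on compact subsets of $D^{k_0}_{\epsilon,\rho}$ (it converges to $k_0$ times the $H^1$-norm of $U$) and $\|\phi\|_\epsilon\to 0$, we have $\|V+r\phi\|_\epsilon\leq C$ uniformly, so $\|S_\epsilon'(V+r\phi)\|_{\text{op}}\leq C$ uniformly in $r\in[0,1]$, $\overline\xi$ and small $\epsilon$. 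Plugging into the displayed formula,
\[
|J_\epsilon(V+\phi)-J_\epsilon(V)|\leq C\|\phi\|_\epsilon^2 = o(\epsilon^2),
\]
by Proposition \ref{prop1}, which is the claim.

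The main obstacle is verifying uniformity: the estimate on $\|S_\epsilon'(u)\|_{\text{op}}$ must be independent of both $\epsilon$ and $\overline{\xi}$. This reduces to the uniform boundedness of $\|V_{\epsilon,\overline{\xi}}\|_\epsilon$ and of the Sobolev constant in (\ref{norma_q}); both are standard and hold because the $W_{\epsilon,\xi_i}$ are localized bumps whose rescaled $H^1$-norm converges to that of $U$. The cancellation provided by the projection identity is what turns a potential $O(\|S_\epsilon(V)\|_\epsilon\,\|\phi\|_\epsilon)$ estimate of the linear term into a quadratic one, which is crucial since $\|\phi\|_\epsilon$ itself is only $o(\epsilon)$.
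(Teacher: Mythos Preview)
Your proof is correct and follows essentially the same approach as the paper: both arguments rest on the orthogonality identity $\langle S_\epsilon(V_{\epsilon,\overline{\xi}}+\phi),\phi\rangle_\epsilon=0$ to eliminate the first-order term, and then bound the second-order remainder by $O(\|\phi\|_\epsilon^2)=o(\epsilon^2)$ via Proposition~\ref{prop1}. The paper carries this out by expanding $J_\epsilon(V+\phi)-J_\epsilon(V)$ explicitly and applying the mean value theorem to the $f$- and $F$-terms separately, whereas you package the same computation as an integral Taylor remainder $-\int_0^1 r\,\langle S_\epsilon'(V+r\phi)\phi,\phi\rangle_\epsilon\,dr$ together with a uniform operator bound on $S_\epsilon'$; this is a cosmetic rather than a substantive difference.
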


\begin{dem}
If we let  $F(u)=\frac{1}{p}(u^+)^p$ then
$$
J_{\epsilon}(V_{\epsilon,\overline{\xi}}+\phi_{\epsilon,\overline{\xi}})-J_{\epsilon}(V_{\epsilon,\overline{\xi}})
$$
$$
=\frac{1}{2}\Vert\phi_{\epsilon,\overline{\xi}}\Vert_{\epsilon}^{2}+\frac{1}{\epsilon^{n}}\int_{M}[\epsilon^{2}\nabla_{g}
V_{\epsilon,\overline{\xi}}\nabla_{g}\phi_{\epsilon,\overline{\xi}}+(\ep^2 {\bf c}s_g + 1)V_{\epsilon,\overline{\xi}}\phi_{\epsilon,\overline{\xi}}-
f(V_{\epsilon,\overline{\xi}})\phi_{\epsilon,\overline{\xi}}].
$$

$$
-\frac{1}{\epsilon^{n}}\int_{M}[F(V_{\epsilon,\overline{\xi}}+\phi_{\epsilon,\overline{\xi}})
-F(V_{\epsilon,\overline{\xi}})-f(V_{\epsilon,\overline{\xi}})\phi_{\epsilon,\overline{\xi}}]
$$

Since $\phi_{\epsilon ,\overline{\xi}} \in
K_{\epsilon,\overline{\xi}}^{\perp}$ and it satisfies  (\ref{Pi}) 

$$0=\langle \phi_{\epsilon ,\overline{\xi}} , S_{\epsilon} (V_{\epsilon,\overline{\xi}} + \phi_{\epsilon ,\overline{\xi}} ) \rangle_{\epsilon} 
=\langle \phi_{\epsilon ,\overline{\xi}} ,  (V_{\epsilon,\overline{\xi}} + \phi_{\epsilon ,\overline{\xi}} ) - \io_{\epsilon}^{*} (f
 (V_{\epsilon,\overline{\xi}} + \phi_{\epsilon ,\overline{\xi}} ) ) \rangle_{\epsilon} $$

$$=\Vert\phi_{\epsilon,\overline{\xi}}\Vert_{\epsilon}^{2}+\frac{1}{\epsilon^{n}}\int_{M}[\epsilon^{2}\nabla_{g}
V_{\epsilon,\overline{\xi}}\nabla_{g}\phi_{\epsilon,\overline{\xi}}+(\ep^2 {\bf c}s_g + 1)V_{\epsilon,\overline{\xi}}\phi_{\epsilon,\overline{\xi}}-
f(V_{\epsilon,\overline{\xi}} +   \phi_{\epsilon,\overline{\xi}}  )\phi_{\epsilon,\overline{\xi}}.
$$

Therefore

$$J_{\epsilon}(V_{\epsilon,\overline{\xi}}+\phi_{\epsilon,\overline{\xi}})-J_{\epsilon}(V_{\epsilon,\overline{\xi}})
=-\frac{1}{2}\Vert\phi_{\epsilon,\overline{\xi}}\Vert_{\epsilon}^{2}+\frac{1}{\epsilon^{n}}\int_{M}
[f(V_{\epsilon,\overline{\xi}}+\phi_{\epsilon,\overline{\xi}})-f(V_{\epsilon,\overline{\xi}})]\phi_{\epsilon,\overline{\xi}}
$$
\begin{equation}\label{o_ep2}
- \displaystyle \frac{1}{\epsilon^{n}}\int_{M}[F(V_{\epsilon,\overline{\xi}}+\phi_{\epsilon,\overline{\xi}})
-F(V_{\epsilon,\overline{\xi}})-f(V_{\epsilon,\overline{\xi}})\phi_{\epsilon,\overline{\xi}}]
\end{equation}

By Proposition 2.1 $\Vert\phi_{\epsilon,\overline{\xi}}\Vert_{\epsilon}^2 = o(\epsilon^2 )$. By the mean value theorem 
we get for some $t_{1}, t_{2}\in[0$, 1 $]$
\begin{equation}\label{37}
\displaystyle \frac{1}{\epsilon^{n}}\int_{M}[f(V_{\epsilon,\overline{\xi}}+\phi_{\epsilon,\overline{\xi}})-
f(V_{\epsilon,\overline{\xi}})]\phi_{\epsilon,\overline{\xi}}=\frac{1}{\epsilon^{n}}\int_{M}f'(V_{\epsilon,\overline{\xi}}+t_{1}
\phi_{\epsilon,\overline{\xi}})\phi_{\epsilon,\xi}^{2}
\end{equation}
and
$$
\frac{1}{\epsilon^{n}}\int_{M}[F(V_{\epsilon,\overline{\xi}}+\phi_{\epsilon,\overline{\xi}})-F(V_{\epsilon,\overline{\xi}})-
f(V_{\epsilon,\overline{\xi}})\phi_{\epsilon,\overline{\xi}}]
$$
\begin{equation}\label{38}
 =\displaystyle \frac{1}{2\epsilon^{n}}\int_{M}f'(V_{\epsilon,\overline{\xi}}+t_{2}\phi_{\epsilon,\overline{\xi}})\phi_{\epsilon,\overline{\xi}}^{2}
\end{equation}

Moreover we have for any $t\in[0, 1]$
$$
\frac{1}{\epsilon^{n}}\int|f'(V_{\epsilon,\overline{\xi}}+t\phi_{\epsilon,\overline{\xi}})|\phi_{\epsilon,\overline{\xi}}^{2}\leq c\frac{1}{\epsilon^{n}}
\int V_{\epsilon,\overline{\xi}}^{p-2}\phi_{\epsilon,\overline{\xi}}^{2}+c\frac{1}{\epsilon^{n}}\int\phi_{\epsilon,\overline{\xi}}^{p}
$$
\begin{equation}\label{39}
 \displaystyle \leq c\frac{1}{\epsilon^{n}}\int\phi_{\epsilon,\overline{\xi}}^{2}+c\frac{1}{\epsilon^{n}}\int\phi_{\epsilon,\overline{\xi}}^{p}
\leq C(\Vert\phi_{\epsilon,\overline{\xi}}\Vert_{\epsilon}^{2}+\Vert\phi_{\epsilon,\overline{\xi}}\Vert_{\epsilon}^{p})=o(\epsilon^{2}).
\end{equation}
In the last inequality we use  (\ref{norma_q}) and the last equality follows from Proposition 2.1 . This proves the lemma. 
\end{dem}
\hfill$\square$

\begin{lem} For $\overline{\xi} \in D_{\epsilon,\rho}^{k_0}$ we have
\begin{equation} 
J_\ep (V_{\ep,\overline{\xi}})= {k_0}\al +\frac{1}{2} \beta \ep ^2 \sum_{i=1}^{k_0} s_g(\xi_i) - \frac{1}{2} 
\sum_{i\neq j}\gamma_{ij} U\Big(\frac{\exp_{\xi_j}^{-1}(\xi_i)}{\ep}\Big) + o(\ep^2)
\end{equation}
Here
\begin{center} 
$\displaystyle \gamma_{ij}\ :=\int_{\mathbb{R}^{n}}U^{p-1}(z)e^{\langle b_{ij},z\rangle} \ dz$,  
\end{center}
 where
\begin{center}
$b_{ij} :=\displaystyle \lim_{\epsilon\rightarrow 0}\displaystyle\frac{\exp_{\xi_{i}}^{-1}\xi_{j}}{|\exp_{\xi_{i}}^{-1}\xi_{j}|}$.  
\end{center}

\end{lem}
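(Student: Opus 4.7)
The strategy is to decompose $J_\ep(V_{\ep,\overline{\xi}})$ into one-peak pieces (to which Lemma \ref{JW} applies) plus interaction terms, and then to identify those interactions with $\gamma_{ij}\,U(\exp_{\xi_j}^{-1}(\xi_i)/\ep)$. Writing $V=\sum_{i}W_i$ with $W_i:=W_{\ep,\xi_i}$, a direct expansion of the quadratic and nonlinear parts of $J_\ep$ gives
\begin{equation*}
J_\ep(V) \;=\; \sum_{i=1}^{k_0} J_\ep(W_i) + \frac{1}{2}\sum_{i\neq j}\langle W_i,W_j\rangle_\ep \;-\; \frac{1}{p\,\ep^{n}}\int_M\!\Bigl[V^p - \sum_i W_i^p\Bigr]\,d\mu_g,
\end{equation*}
so Lemma \ref{JW} immediately produces the first two terms $k_0\al+\frac{\beta}{2}\ep^2\sum_i s_g(\xi_i)$ up to $o(\ep^2)$.

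For the quadratic cross term I would integrate by parts,
$\langle W_i,W_j\rangle_\ep = \tfrac{1}{\ep^n}\!\int_M Y_{\ep,\xi_i}\, W_j\,d\mu_g$, where $Y_{\ep,\xi_i}=-\ep^2\Delta_g W_i+(1+\ep^2{\bf c}s_g)W_i$, and then use the estimate $|Y_{\ep,\xi_i}-W_i^{p-1}|_{p',\ep}=O(\ep^{2})$ obtained in the proof of Lemma 2.2, combined with the exponential smallness of $W_j$ on the support of $W_i$, to conclude
\begin{equation*}
\langle W_i,W_j\rangle_\ep \;=\; \frac{1}{\ep^n}\int_M W_i^{p-1}W_j\,d\mu_g + o(\ep^2).
\end{equation*}
For the nonlinear cross term I would partition $M$ into half-distance balls $A_i$ around each $\xi_i$ together with a far region. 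On $A_i$ the Taylor expansion
$V^p = W_i^p + p\,W_i^{p-1}\!\sum_{k\neq i}W_k + O\!\bigl(W_i^{p-2}(\sum_{k\neq i}W_k)^2\bigr)$
is uniform; the quadratic-in-remainder piece is dominated by $U(d_g(\xi_i,\xi_k)/\ep)^{1+\sigma}$, which is $o(\ep^2)$ by (\ref{distance properties2}). The cross pieces $\int_{A_i}W_k^p$ for $k\neq i$ and the far-region integral are even smaller. Summing the leading contributions across the partition,
\begin{equation*}
\frac{1}{p\,\ep^n}\!\int_M[V^p-\sum_i W_i^p]\,d\mu_g \;=\; \frac{1}{\ep^n}\sum_{i\neq j}\int_M W_i^{p-1}W_j\,d\mu_g + o(\ep^2),
\end{equation*}
so the two interactions combine to $-\tfrac{1}{2\ep^n}\sum_{i\neq j}\int_M W_i^{p-1}W_j\,d\mu_g$ modulo $o(\ep^2)$.

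It remains to evaluate $\tfrac{1}{\ep^n}\int_M W_i^{p-1}W_j\,d\mu_g$. Change variables $x=\exp_{\xi_i}(\ep z)$, so that $W_i(x) = U(z)\chi_r(\ep z)$ and $W_j(x) = U\bigl(\tfrac{\exp_{\xi_j}^{-1}\xi_i}{\ep} + z + O(\ep|z|^2)\bigr)$ times a cutoff. Applying the sharp asymptotic (\ref{ED1}), together with $d_g(\xi_i,\xi_j)/\ep\to\infty$ from (\ref{distance properties}), gives for $z$ in any fixed compact set
\begin{equation*}
U\!\Bigl(\tfrac{\exp_{\xi_j}^{-1}\xi_i}{\ep} + z + O(\ep|z|^2)\Bigr) \;=\; U\!\Bigl(\tfrac{\exp_{\xi_j}^{-1}\xi_i}{\ep}\Bigr)\, e^{\langle b_{ij},z\rangle}\bigl(1+o(1)\bigr),
\end{equation*}
since in normal coordinates $\exp_{\xi_j}^{-1}\xi_i/|\exp_{\xi_j}^{-1}\xi_i|\to -b_{ij}$ as both points approach $\xi_0$; a routine tail bound in $z$ takes care of the non-compact portion and yields $\tfrac{1}{\ep^n}\int_M W_i^{p-1}W_j\,d\mu_g = \gamma_{ij}\,U(\exp_{\xi_j}^{-1}\xi_i/\ep)(1+o(1))$, completing the claimed expansion.

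The main obstacle is keeping the error analysis honest: the interaction $\gamma_{ij}U(\exp_{\xi_j}^{-1}\xi_i/\ep)$ is itself only $O(\ep^2)$, so it cannot be hidden in $o(\ep^2)$, and all higher-order cross contributions must be shown to carry a genuine extra small factor. The key tool making this possible is (\ref{distance properties2}), which converts quadratic-in-interaction remainders of the form $\ep^{-2}U(d_g(\xi_i,\xi_j)/\ep)^{1+\sigma}$ into $o(1)$.
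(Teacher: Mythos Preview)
Your approach is correct and essentially the same as the paper's: both decompose $J_\ep(V_{\ep,\overline\xi})$ into the diagonal pieces $\sum_i J_\ep(W_i)$ (handled by Lemma~\ref{JW}), the quadratic cross terms $\langle W_i,W_j\rangle_\ep$, and the nonlinear cross term $\frac{1}{p\ep^n}\int(V^p-\sum_i W_i^p)$, and both reduce the interaction to $-\tfrac{1}{2\ep^n}\sum_{i\neq j}\int W_i^{p-1}W_j$ up to $o(\ep^2)$ before identifying this integral with $\gamma_{ij}\,U(\exp_{\xi_j}^{-1}\xi_i/\ep)$ via the asymptotics (\ref{ED1}). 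The only organisational difference is that the paper adds and subtracts $\sum_{i<j}\frac{1}{\ep^n}\int W_i^{p-1}W_j$ at the outset so that its term $I_2$ is directly $o(\ep^2)$, and then defers the estimates of $I_2$ and $I_3$ to \cite[Lemma~4.1]{Dancer}, whereas you keep the pieces separate and sketch the partition and tail arguments explicitly; the underlying analysis is identical.
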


\begin{proof}
\noindent
\par

$J_\ep (V_{\ep,\overline{\xi}}) =
J_\ep \Big(\displaystyle\sum_{i=1}^{k_0} W_{\ep,\xi_i}\Big) =
$
\[
\frac{1}{\ep^n} \displaystyle\int_M \Big[\frac{1}{2}\ep^2 \big|\nabla_g\Big( \sum_{i=1}^{k_0} W_{\ep,\xi_i}\Big)\big|^2 +
\frac{1}{2}  (\ep^2 {\bf c}s_g +1) \Big(\sum_{i=1}^{k_0}  W_{\ep,\xi_i}\Big)^2 - \frac{1}{p} \Big(\sum_{i=1}^{k_0}  W_{\ep,\xi_i}\Big)^p\Big]  \ d\mu_g
\]
 \[
=\frac{1}{\ep^n}  \sum_{i=1}^{k_0} \Bigg[ \displaystyle\int_M \frac{1}{2}\ep^2\big|\nabla_gW_{\ep,\xi_i}\big|^2  \ d\mu_g +
\frac{1}{2} \displaystyle\int_M  (\ep^2 {\bf c}s_g +1)  \Big(W_{\ep,\xi_i}\Big)^2  \ d\mu_g - \frac{1}{p} \displaystyle\int_M  \Big(W_{\ep,\xi_i}\Big)^p  \ d\mu_g \Bigg]
\]
\[
 +\frac{1}{\ep^n} \sum_{i<j}^{k_0} \Bigg[ \displaystyle\int_M\ep^2 \nabla_g W_{\ep,\xi_i} \nabla_g W_{\ep,\xi_j}   \ d\mu_g+
\displaystyle\int_M  (\ep^2 {\bf c} s_g +1) W_{\ep,\xi_i} W_{\ep,\xi_j}   \ d\mu_g -  \displaystyle\int_M  \Big(W_{\ep,\xi_i}\Big)^{p-1}W_{\ep,\xi_j}  \ d\mu_g \Bigg]
\]
\[
 -\frac{1}{\ep^n} \Bigg[ \frac{1}{p} \displaystyle\int_M  \Big(\sum_{i=1}^{k_0}  W_{\ep,\xi_i}\Big)^p \ d\mu_g  -
 \frac{1}{p}  \sum_{i=1}^{k_0}  \displaystyle\int_M  \Big(W_{\ep,\xi_i}\Big)^p  \ d\mu_g - 
   \sum_{i< j}^{k_0} \displaystyle\int_M  \Big(W_{\ep,\xi_i}\Big)^{p-1}W_{\ep,\xi_j}  \ d\mu_g  \Bigg]
 \]
 \begin{equation}\label{i_1,2,3}
    =: I_1 + I_2 + I_3.
 \end{equation}
By Lemma \ref{JW} we get
 \[
 I_1= {k_0}\al + \frac{ \beta}{2} \ep ^2 \sum_{i=1}^{k_0} s_g(\xi_i) + o(\ep^2).
 \]
 Let us estimate the second term $I_2$ in (\ref{i_1,2,3}). We claim that  $I_2=o(\ep^2)$.

\[ I_2 =  \sum_{i<j}^{k_0}  \displaystyle\int_M  \ep^2 {\bf c}s_g  W_{\ep,\xi_i} W_{\ep,\xi_j}   \ d\mu_g  \ \  +
\]

\[\frac{1}{\ep^n} \sum_{i<j}^{k_0} \Bigg[ \displaystyle\int_M\ep^2 \nabla_g W_{\ep,\xi_i} \nabla_g W_{\ep,\xi_j}   \ d\mu_g+
\displaystyle\int_M   W_{\ep,\xi_i} W_{\ep,\xi_j}   \ d\mu_g -  \displaystyle\int_M  \Big(W_{\ep,\xi_i}\Big)^{p-1}W_{\ep,\xi_j}  \ d\mu_g \Bigg]
\]
 
It is easy to see that the first term is $o(\epsilon^2 )$. The second term only involves the $W_{\ep,\xi_j}$'s and it is explicitly estimated in 
\cite[Lemma 4.1]{Dancer}: it is shown there  that it is of the order of $o(\epsilon^2 )$. The term $I_3$ also only involves the
$W_{\ep,\xi_j}$'s  and it is estimated in  \cite[Lemma 4.1]{Dancer}. They show

\[
I_3 = - \frac{1}{2} 
\sum_{i\neq j}\gamma_{ij} U\Big(\frac{\exp_{\xi_j}^{-1}(\xi_i)}{\ep}\Big) + o(\ep^2)
\] 

This proves the lemma.

 \end{proof}

\begin{demProp3}
The last two lemmas prove (\ref{expansion}).
We are left to prove that if  $\overline{\xi}_{\epsilon} = (\xi_{1},\ \ldots\ ,\ \xi_{k_0})$ is a critical point of $\overline{J_{\epsilon}}$, then the function
$V_{\epsilon,\overline{\xi}_{\epsilon}}+\phi_{\epsilon,\overline{\xi}_{\epsilon}}$ is a
solution to problem  $(\ref{Yamabe})$. 
For $ \alpha=1, ... , k_0$ and $x^{\alpha}\in B(0, r)$ we  let $y^{\alpha} =  \exp_{\xi_{\alpha}}(x^{\alpha})$ and
$\overline{y}= (y^{1},\ \ldots,\ y^{k_0}) \in M^{k_0}$.

Since $\overline{\xi}$ is a critical point of $\overline{J_{\epsilon}}$,
\begin{equation}
 \label{A}
\displaystyle \frac{\partial}{\partial x_{i}^{\alpha}}\overline{J_{\epsilon}}(\overline{y}(x))\Big|_{x=0}=0, \hbox{ for } \alpha=1, ... , k_0,\ \  i=1, . . . , n.  
\end{equation}

We write

$$
S_{\epsilon} (V_{\epsilon,\overline{y}(x)}+\phi_{\epsilon,\overline{y}(x)}) = 
\Pi_{\epsilon , \overline{y}}^{\perp} S_{\epsilon} (V_{\epsilon,\overline{y}(x)}+\phi_{\epsilon,\overline{y}(x)}) +
\Pi_{\epsilon , \overline{y}} S_{\epsilon} (V_{\epsilon,\overline{y}(x)}+\phi_{\epsilon,\overline{y}(x)})
$$

The first term on the right is of course 0 by the construction of $\phi_{\epsilon,\overline{y}(x)}$. We write the second term as

$$\Pi_{\epsilon , \overline{y}} S_{\epsilon} (V_{\epsilon,\overline{y}(x)}+\phi_{\epsilon,\overline{y}(x)})
=\Sigma_{i,\alpha} C_{\epsilon}^{i,\alpha} Z_{\epsilon ,y^{\alpha} }^i $$

\noindent 
for some functions $C_{\epsilon}^{i,\alpha} : B(0,r)^{k_0} \rightarrow \re$. We have to prove that for each $i, \alpha$
(and $\epsilon >0 $ small), $C_{\epsilon}^{i,\alpha } (0)=0$.
Then fix $i, \alpha$.

We have 
\[ 0=
\displaystyle \frac{\partial}{\partial x_{i}^{\alpha}}\overline{J_{\epsilon}}(\overline{y}(x))=
J_{\epsilon}'(V_{\epsilon,\overline{y}(x)}+\phi_{\epsilon,\overline{y}(x)})[\frac{\partial}{\partial x_{i}^{\alpha}}
(V_{\epsilon,\overline{y}(x)}+\phi_{\epsilon,\overline{y}(x)})]=
\]
\[
=\langle S_{\epsilon} (V_{\epsilon,\overline{y}(x)}+\phi_{\epsilon,\overline{y}(x)}) ,
\frac{\partial}{\partial x_{i}^{\alpha}} \Big|_{x=0} (V_{\epsilon,\overline{y}(x)}+\phi_{\epsilon,\overline{y}(x)})\rangle_{\epsilon}
\]
\begin{equation}\label{B}
 =\langle \sum_{k,\beta}C_{\epsilon}^{k ,\beta} (0) Z_{\epsilon,y^{\beta}}^{k}, 
\displaystyle \frac{\partial}{\partial x_{i}^{\alpha}} \Big|_{x=0} (V_{\epsilon,\overline{y}(x)}+\phi_{\epsilon,\overline{y}(x)})\rangle_{\epsilon}.
\end{equation}

Since $\phi_{\epsilon,\overline{\xi}(y)}\in K_{\epsilon,\overline{\xi}(y)}^{\perp}$, for any $k$ and $\beta$ we have that
$\langle Z_{\epsilon,y^{\beta}}^{k}, \phi_{\epsilon,\overline{y}(x)}\rangle_{\epsilon}=0$. Then
$$ \liminf_{\epsilon \rightarrow 0}
|  \langle Z_{\epsilon,y^{\beta}}^{k},\ (\frac{\partial}{\partial x_{i}^{\alpha}}\phi_{\epsilon,\overline{y}(x)})\Big|_{y=0}\rangle_{\epsilon}   | 
 = \liminf_{\epsilon \rightarrow 0} |
-\langle(\frac{\partial}{\partial x_{i}^{\alpha}}Z_{\epsilon, y^{\beta}}^{k})\Big|_{y=0},\ \phi_{\epsilon,\overline{y} (x) }\rangle_{\epsilon} |
$$
\begin{equation}\label{D}
 \leq  \liminf_{\epsilon \rightarrow 0} \displaystyle \Vert(\frac{\partial}{\partial x_{i}^{\alpha}}Z_{\epsilon, y^{\beta} }^{k})\Big|_{y=0}
\Vert_{\epsilon}\cdot\Vert\phi_{\epsilon,\overline{y} (x) } \Vert_{\epsilon}=0,
\end{equation}
where the last equality follows from Proposition 2.1 and (\ref{DZ}).
\noindent

Now  from (\ref{BB})

\begin{equation}
 \langle \sum_{k,\beta}C_{\epsilon}^{k ,\beta} (0) Z_{\epsilon,y^{\beta}}^{k}, 
\displaystyle \frac{\partial}{\partial x_{i}^{\alpha}} \Big|_{x=0} V_{\epsilon,\overline{y}(x)}\rangle_{\epsilon}
\end{equation}

\begin{equation}
 =\langle \sum_{k,\beta}C_{\epsilon}^{k ,\beta} (0) Z_{\epsilon,y^{\beta}}^{k}, 
\displaystyle \frac{\partial}{\partial x_{i}^{\alpha}} \Big|_{x=0} W_{\epsilon,{y^{\alpha}}(x)}\rangle_{\epsilon}
\end{equation}

\begin{equation}
 =\langle \sum_{k}C_{\epsilon}^{k ,\alpha} (0) Z_{\epsilon,y^{\alpha}}^{k}, 
\displaystyle \frac{\partial}{\partial x_{i}^{\alpha}} \Big|_{x=0} W_{\epsilon,{y^{\alpha}}(x)}\rangle_{\epsilon}
+
\langle \sum_{k,\beta \neq \alpha}C_{\epsilon}^{k ,\beta} (0) Z_{\epsilon,y^{\beta}}^{k}, 
\displaystyle \frac{\partial}{\partial x_{i}^{\alpha}} \Big|_{x=0} W_{\epsilon,{y^{\alpha}}(x)}\rangle_{\epsilon}
\end{equation}

It follows from (\ref{BBB})  that 

\begin{equation}
\lim_{\epsilon \rightarrow 0}
\langle \sum_{k,\beta \neq \alpha}C_{\epsilon}^{k ,\beta} (0) Z_{\epsilon,y^{\beta}}^{k}, 
\displaystyle \frac{\partial}{\partial x_{i}^{\alpha}} \Big|_{x=0} W_{\epsilon,{y^{\alpha}}(x)}\rangle_{\epsilon} =0.
\end{equation}

Also

$$
 \langle \sum_{k}C_{\epsilon}^{k ,\alpha} (0) Z_{\epsilon,y^{\alpha}}^{k}, 
\displaystyle \frac{\partial}{\partial x_{i}^{\alpha}} \Big|_{x=0} W_{\epsilon,{y^{\alpha}}(x)}\rangle_{\epsilon}
= \langle C_{\epsilon}^{i  ,\alpha} (0) Z_{\epsilon,y^{\alpha}}^{i}, 
\displaystyle \frac{\partial}{\partial x_{i}^{\alpha}} \Big|_{x=0} W_{\epsilon,{y^{\alpha}}(x)}\rangle_{\epsilon}$$

$$+ \langle \sum_{k \neq i}C_{\epsilon}^{k  ,\alpha} (0) Z_{\epsilon,y^{\alpha}}^{k}, 
\displaystyle \frac{\partial}{\partial x_{i}^{\alpha}} \Big|_{x=0} W_{\epsilon,{y^{\alpha}}(x)}\rangle_{\epsilon}
$$

Then it follows from (\ref{AA}) that

$$
\lim_{\epsilon \rightarrow 0} \ \epsilon \
\langle \sum_{k}C_{\epsilon}^{k ,\alpha} (0) Z_{\epsilon,y^{\alpha}}^{k}, 
\displaystyle \frac{\partial}{\partial x_{i}^{\alpha}} \Big|_{x=0} W_{\epsilon,{y^{\alpha}}(x)}\rangle_{\epsilon}=C_{\epsilon}^{i  ,\alpha} (0)  C .
$$

And then it follows from  (\ref{B}) that $C_{\epsilon}^{i  ,\alpha} (0) =0$.

\end{demProp3}
\hfill$\square$



\section{Proof of Theorem $\ref{main_thm}$}

\begin{demTeo}
	
	We will prove that if $\bar \xi_{\epsilon} \in \overline{D_{\epsilon,\rho}^{k_0}}$, is such that 
$\overline{J}_{\epsilon}(\bar{\xi}_{\epsilon})= \max \{ \overline{J}_{\epsilon} (\bar{\xi}): \bar \xi \in \overline{D_{\epsilon,\rho}^{k_0}} \}$, 
 then  $\bar \xi_{\epsilon} \in {D_{\epsilon,\rho}^{k_0}}$. Then by Proposition 3.1 $u_{\epsilon}= V_{\epsilon ,\bar \xi_{\epsilon}}
+\phi_{\epsilon ,\bar \xi_{\epsilon}}$ is a solution to problem (3) and $\| u_{\epsilon} - V_{\epsilon ,\bar \xi_{\epsilon}} \|
=\| \bar \phi_{\epsilon ,\bar \xi_{\epsilon}}  \| = o(\epsilon )$.

We first construct a particular $\bar \eta_{\epsilon} \in D_{\epsilon,\rho}^{k_0} $. 	
Let $\bar \eta_{\epsilon} =(\eta_1,\eta_2,...,\eta_k)$, with $\eta_i = \eta_i(\epsilon)=\exp_{\xi_0}(\sqrt{\epsilon} \  e_i)$,
for $i \in \{1,2,...,k\}$, where $e_1,e_2,...,e_k$ are distinct points in  $\re^n$. 

Then, by direct computation, $\bar \eta_{\epsilon}$ verifies the following estimates:

a. $  d_g(\xi_0,\eta_i)=\sqrt{\epsilon} \ |e_i| $.

b.  $d_g(\eta_i,\eta_j)=|exp_{\eta_i}^{-1}\eta_j|=\sqrt{\epsilon} \  (|e_i-e_j|+o(1))$.

c. $U\left(\frac{\exp_{\eta_i}^{-1}\eta_j}{\epsilon}\right)=o(\epsilon^2)$, since
	
	$$U\left(\frac{\exp_{\eta_i}^{-1}\eta_j}{\epsilon}\right)=U\left(\frac{d_g(\eta_i,\eta_j)}{\epsilon}\right)=U\left(\frac{\sqrt{\epsilon} 
	(|e_i-e_j|+o(1))}{\epsilon}\right)=o(\epsilon^2).$$

We can then see that  $\bar J(\bar \eta_{\epsilon})=k_0 \alpha + (1/2)\beta  \epsilon^2 \sum_{i=1}^{k_0}s_g(\eta_i)+o(\epsilon^2)$, by   combining (c) 
and the expansion of $\bar J(\bar \eta_{\epsilon})$ in Proposition 3.1.

\noindent	Note that (a) and (c) imply that, for a fixed $\rho>0$ and $\epsilon$ small enough ,   $\bar \eta_{\epsilon} =(\eta_1,\eta_2,...,\eta_k) \in 
{D_{\epsilon,\rho}^{k_0}}$.

\noindent	Now, since $s_g(\xi_0)$ is a local minimum, for $\epsilon$ small we have an  expansion for $s_g(\eta_i)$: $$s_g(\eta_i)=s_g(\xi_0)+s_g''(\xi_0) \  
(d_g(\xi_0,\eta_i))^2+o(\sqrt {\epsilon}^3)=s_g(\xi_0)+s_g''(\xi_0) \ \epsilon \  |e_i|^2+o(\sqrt {\epsilon}^3), $$
	so in particular
	 $s_g(\eta_i)=s_g(\xi_0)+o(1).$
	 
Then we have:  
		$$\bar J(\bar \eta_{\epsilon})=k_0 \alpha +(1/2) \beta  \epsilon^2 \sum_{i}^{k_0}s_g(\eta_i)+o(\epsilon^2)  =k_0 \alpha +(1/2) \beta  
		\epsilon^2 \sum_{i}^{k_0}(s_g(\xi_0)+o(1))+o(\epsilon^2),$$
		\noindent  and we obtain
	\begin{equation}
	\label{etaexp}
\bar J(\bar \eta_{\epsilon})	=k_0 \alpha +k_0 (1/2)  \beta  \epsilon^2 s_g(\xi_0)+o(\epsilon^2). 
	\end{equation}

	Now, since $\bar{\xi}_{\epsilon}$ is a maximum of $\tilde{J_{\epsilon}}$ in $\overline{{D_{\epsilon,\rho}^{k_0}}}$, we have 
	\begin{equation}
	\label{max}
		\bar{J}_{\epsilon}(\bar{\xi}_{\epsilon})\geq \bar J(\bar \eta_{\epsilon}).
	\end{equation}

	Applying Proposition 3.1 to the left  side of (\ref{max}), we get
		$$k_0 \alpha + \frac{1}{2} \beta  \epsilon^2 \sum_{i}^{k_0}s_g(\xi_i)-\frac{1}{2}\sum_{i,j=1,i\neq j}^{k_0} \gamma_{ij} \  
		U\left(\frac{\exp_{\xi_i}^{-1}\xi_j}{\epsilon}\right) \geq k_0 \alpha +k_0 \frac{1}{2}  \beta  \epsilon^2s_g(\xi_0)+o(\epsilon^2),$$
	\noindent that is,
	
	 \begin{equation}
	 \label{upper}
	  \beta  \epsilon^2 \left(k_0 s_g(\xi_0)-\sum_{i}^{k_0}s_g(\xi_i) \right)+\sum_{i,j=1,i\neq j}^{k_0} \gamma_{ij} 
	  \ U\left(\frac{\exp_{\xi_i}^{-1}\xi_j}{\epsilon}\right)\leq o(\epsilon^2).
	 \end{equation}

Fix $\rho$, small enough so that $\xi_0$ is the only  minimum of $s_g$ in $B_g(\xi_0,\rho)$.
With this choice of $\rho$ we  see that, in fact, each term in the left hand side of  inequality (\ref{upper}) is non-negative and therefore bounded 
from above by $o(\epsilon^2)$. 
	
	  Since $d(\xi_0,\xi_i)\leq \rho$, for each $i$, $1\leq i \leq k_0$,  we have,
$$	0\leq \beta  \epsilon^2  \left(k_0 s_g(\xi_0)-\sum_{i}^{k_0}s_g(\xi_i) \right)=o(\epsilon^2),$$
that is, since $\beta<0$,
\begin{equation}
\label{positiveS}
0\geq k_0 s_g(\xi_0)-\sum_{i}^{k_0}s_g(\xi_i) =o(1).
\end{equation}
	\noindent It follows that $\lim_{\epsilon\rightarrow 0}s_g(\xi_i)=s_g(\xi_0)$. And then, since $\xi_0$  
	is the only minimum point of $s_g$ in $B_g(\xi_0,\xi_i)$, we have $\lim_{\epsilon\rightarrow 0}\xi_i=\xi_0$. Hence, $\epsilon$ small enough implies 
	\begin{equation}
	\label {rho}
	d_g(\xi_i,\xi_0)<\rho.
		\end{equation}
	Now, recall  that $\gamma_{ij}:= \int_{\re^n} U^{p-1}(z) e^{ \langle b_{ij}, z \rangle} dz$, and that $|b_{ij}|=1$, for all $i,j \leq k$. 
	This implies that $\gamma_{ij}$ is bounded from below by a positive constant.  We define  
	$$\gamma:=\min \left\{\ \int_{\re^n} U^{p-1}(z) e^{ \langle b,z \rangle} dz : b\in \re^n, |b|=1 \right\}>0.$$

 Then, by (\ref{upper}) and (\ref{positiveS}), 
 
 $$o(\epsilon^2)\geq \sum_{i,j=1,i\neq j}^{k_0} \gamma_{ij} \ U\left(\frac{\exp_{\xi_i}^{-1}\xi_j}{\epsilon}\right)\geq  \sum_{i,j=1,i\neq j}^{k_0}\gamma
 \ U\left(\frac{\exp_{\xi_i}^{-1}\xi_j}{\epsilon}\right),$$
	
\noindent 	that is, for  $\epsilon$ small enough, 

\begin{equation}
\label{U}
U\left(\frac{\exp_{\xi_i}^{-1}\xi_j}{\epsilon}\right)<\epsilon^2.
\end{equation}	
Of course, (\ref{U}) and (\ref{rho})  imply that $\bar \xi_{\epsilon}\in D^{k_0}_{\epsilon,\rho}$.

\end{demTeo}

\section{Profile description of low energy solutions}

 Consider the Nehari manifold $N_{\epsilon}$, associated to $J_{\epsilon}$,

$$N_{\epsilon}=\{u \in H^{1}(M)-\{0\}: \int_{M} \left(  \epsilon^2 |\nabla u|^2 + \left({\bf c} {s_g}\epsilon^2 +1\right)u^2\right)  dV_g= \int_{M}(u^+)^{p} dV_g \}.$$

\noindent It is well known that the critical points of $J_{\epsilon}$ restricted to $N_{\epsilon}$ are positive solutions of (\ref{Yamabe}). 

Since for $u\in N_{\epsilon}$, $J_{\epsilon}(u)=\frac{1}{\epsilon^n} \frac{p-2}{2p}  \int_{M} (u^+)^{p}  dV_g$, it follows that  $J_{\epsilon}(u)$ restricted to $N_{\epsilon}$ is bounded below.  With this in mind we will set

$$m_{\epsilon}= \inf_{u \in N_{\epsilon}} J_{\epsilon}(u).$$

A similar setting on $\re^n$ is well known.

Consider the functional $E:H^{1}(\re^n)\rightarrow \re$,
$$E(f)=\int_{\re^n} \left(\frac{1}{2}|\nabla f|^2 +\frac{1}{2} f^2-\frac{1}{q}(f^+)^q\right)dx,$$

\noindent and the Nehari manifold $N(E)$, associated to $E$,

$$N(E)=\{f \in H^{1}(\re^n)-\{0\}: \int_{\re^n} \left(   |\nabla f|^2 + f^2\right)  dx= \int_{\re^n}(f^+)^{p} dx\}.$$

Of course, $U$ is the minimizer of the functional $E$, restricted to the Nehari manifold $N(E)$. We denote the minimum by $m(E)$, note that
\begin{equation}
\label{mE}
m(E)= \inf_{f \in N(E)} E(f)=\frac{p-2}{2p}|| U||_p^p.
\end{equation}

{\bf Remark}
	It is also a known result  (cf. in \cite{Petean}, \cite{Akutagawa}), that, for these settings, 
	$$\lim_{\epsilon \rightarrow 0} m_{\epsilon}=m(E).$$


Note that for any $\epsilon>0$,  the function $U_{\epsilon}(x)=U(\frac{x}{\epsilon})$, is a solution of 
\begin{equation}
\label{Kwong2}
-\epsilon^2 \Delta U_{\epsilon}+ U_{\epsilon}=U_{\epsilon}^{p-1}.
\end{equation}

\noindent With this in mind, we define the functional 
$$E_{\epsilon}(f)=\epsilon^{-n}\int_{\re^n} \left(\frac{\epsilon^2}{2}|\nabla f|^2 +\frac{1}{2} f^2-\frac{1}{p}(f^+)^p\right)dx,$$

\noindent and the Nehari manifold $N(E_{\epsilon})$, associated to $E_{\epsilon}$,

$$N(E_{\epsilon})=\{f \in H^{1}(\re^n)-\{0\}: \int_{\re^n} \left( \epsilon^2  |\nabla f|^2 + f^2\right)  dx= \int_{\re^n}(f^+)^{p} dx \}.$$

Of course, $U_{\epsilon}$ is a minimizer of the functional $E_{\epsilon}$, restricted to the Nehari manifold $N(E_{\epsilon})$. Note that 
\begin{equation}
\label{mE2}
m_{\epsilon}(E)= \inf_{f \in N(E_{\epsilon})} E_{\epsilon}(f)=m(E).
\end{equation}

In order to describe the profile of low energy solutions $u_{\epsilon}$ of equation (\ref{Yamabe}), for $\epsilon$ small, we start with the observation that 
they all have at least one maximum at some point $x_{\epsilon} \in M$.

 Given $\epsilon>0$ and $d>0$, we let,
$$ \Sigma_{\epsilon,d}=\{u \in N_{\epsilon}: J_{\epsilon}(u)<d\}.$$

\begin{lem}
	Let $u_{\epsilon}$ be a solution of (\ref{Yamabe}), such that $u_{\epsilon} \in \Sigma_{\epsilon, 2 m(E)}$. Then, for $\epsilon$ small enough,  $u_{\epsilon}$ is not constant.
\end{lem}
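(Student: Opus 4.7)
The plan is elementary: show that any constant solution of (\ref{Yamabe}) must have energy $J_\epsilon$ blowing up like $\epsilon^{-n}\Vol(M)$, while the hypothesis $u_\epsilon\in\Sigma_{\epsilon,2m(E)}$ gives the $\epsilon$-independent upper bound $J_\epsilon(u_\epsilon)<2m(E)=\frac{p-2}{p}\|U\|_p^p$. These two facts become incompatible once $\epsilon$ is small.

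First I would argue by contradiction, assuming $u_\epsilon\equiv c_\epsilon$ is constant. Since $u_\epsilon\in N_\epsilon$ we have $c_\epsilon\neq 0$, and positivity of solutions (coming from the formulation $f(u)=(u^+)^{p-1}$ together with the maximum principle) gives $c_\epsilon>0$. Evaluating (\ref{Yamabe}) on $c_\epsilon$ kills the Laplacian and leaves the pointwise identity
\begin{equation*}
(1+c_N\epsilon^2 s_g(x))\,c_\epsilon \;=\; c_\epsilon^{p-1}\qquad\text{for every } x\in M,
\end{equation*}
which forces $s_g$ to be constant on $M$, say $s_g\equiv s_0$, and pins down $c_\epsilon=(1+c_N\epsilon^2 s_0)^{1/(p-2)}$.

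Next I would substitute this information back into the functional. Because $u_\epsilon$ is constant, $J_\epsilon(c_\epsilon)$ reduces to $\Vol(M)\epsilon^{-n}$ times an explicit function of $c_\epsilon$, and the Nehari-type relation $c_\epsilon^{p-2}=1+c_N\epsilon^2 s_0$ collapses the bracket to
\begin{equation*}
J_\epsilon(c_\epsilon) \;=\; \frac{\Vol(M)}{\epsilon^n}\,\frac{p-2}{2p}\,c_\epsilon^p.
\end{equation*}
Since $c_\epsilon^p\to 1$ as $\epsilon\to 0$, the right-hand side diverges to $+\infty$; in particular it exceeds $2m(E)$ for every sufficiently small $\epsilon$, contradicting $u_\epsilon\in\Sigma_{\epsilon,2m(E)}$.

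I do not anticipate any serious obstacle: the entire argument hinges on the single observation that constancy of $u_\epsilon$ propagates through (\ref{Yamabe}) to constancy of $s_g$, after which the rest is a one-line computation together with the fact that $2m(E)$ is a fixed number independent of $\epsilon$.
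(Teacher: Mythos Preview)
Your proof is correct and follows essentially the same route as the paper: assume $u_\epsilon$ is constant, observe that $J_\epsilon(u_\epsilon)$ then scales like $\epsilon^{-n}\Vol(M)$ and hence blows up, contradicting the bound $J_\epsilon(u_\epsilon)<2m(E)$. Your version is in fact more careful than the paper's, since you explicitly extract from the equation that a constant solution forces $s_g$ constant and pins down $c_\epsilon\to 1$, whereas the paper leaves the nonvanishing of the integrand implicit.
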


\begin{proof}
This follows directly from computation. If $u_{\epsilon}$ were constant, then 	we would have
	$$J_{\epsilon}(u_{\epsilon})= \epsilon^{-n} \int_{M} \left( \frac{1}{2} \left( {s_g}{\bf c}\epsilon^2 +1 \right) u_{\epsilon}^2- \frac{1}{p}(u_{\epsilon}^+)^{p}\right)  dV_g.$$
	
\noindent	That is, $J_{\epsilon}(u_{\epsilon})\rightarrow \infty$ as $\epsilon \rightarrow 0$, contradicting that  $J_{\epsilon}(u_{\epsilon}) < 2 m(E)$.
	
	\end{proof}

Using standard regularity theory it can proved that if $u_{\epsilon}$ is a non-negative  solution  of  (\ref{Yamabe}), such that $u_{\epsilon} \in \Sigma_{\epsilon,  2m(E)}$, then $u_{\epsilon} \in C^{2}(M)$ (see for example Theorem 4.1 of \cite{Parker}).

Since $M$ is compact, it follows that the low energy solutions $u_{\epsilon}$ have at least one maximum on $M$. Hence, if  $x_{\epsilon}$ is a maximum point of a solution $u_{\epsilon}$ of  (\ref{Yamabe}), then $\epsilon^2\Delta_g u(x_{\epsilon})\leq 0$, and then  $0 \geq \left({s_g(x_{\epsilon})}{\bf c}\epsilon^2 +1\right)u(x_{\epsilon})-u(x_{\epsilon})^{p-1}$. We thus have the following.

\begin{lem}
	\label{bigger}
	Let $x_{\epsilon} \in M$ be a maximum point of a solution $u_{\epsilon}$ of  (\ref{Yamabe}), with $u_{\epsilon} \in \Sigma_{\epsilon, 2 m(E)}$, then $u_{\epsilon}(x_{\epsilon})^{p-2} \geq {s_g(x_{\epsilon})}{\bf c}\epsilon^2 +1\geq {\min s_g}{\bf c}\epsilon^2 +1$.   
\end{lem}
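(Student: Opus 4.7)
The plan is to exploit the fact that the Laplacian is non-positive at an interior maximum and then read off the pointwise inequality directly from the Yamabe equation (\ref{Yamabe}). This lemma is essentially an immediate consequence of the weak maximum principle; the only thing to justify carefully is that $u_{\epsilon}(x_{\epsilon})>0$ so that we may divide by it.

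First I would observe that, by the regularity remark preceding the lemma, $u_{\epsilon}\in C^{2}(M)$, so that $\Delta_{g}u_{\epsilon}(x_{\epsilon})$ is well defined. Since $u_{\epsilon}\in N_{\epsilon}\subset\Sigma_{\epsilon,2m(E)}$, it is a non-trivial critical point of $J_{\epsilon}$; because $f(u)=(u^{+})^{p-1}$, any such critical point is non-negative, and the previous lemma rules out $u_{\epsilon}\equiv 0$. Hence $\max_{M}u_{\epsilon}=u_{\epsilon}(x_{\epsilon})>0$. (If desired, the strong maximum principle applied to $-\epsilon^{2}\Delta_{g}u_{\epsilon}+(c_{N}s_{g}\epsilon^{2}+1)u_{\epsilon}=u_{\epsilon}^{p-1}\geq 0$ gives $u_{\epsilon}>0$ everywhere, which is more than what we need here.)

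Next, at the interior maximum $x_{\epsilon}$ we have $\Delta_{g}u_{\epsilon}(x_{\epsilon})\leq 0$, so $-\epsilon^{2}\Delta_{g}u_{\epsilon}(x_{\epsilon})\geq 0$. Substituting into the equation (\ref{Yamabe}) at the point $x_{\epsilon}$ gives
$$
u_{\epsilon}(x_{\epsilon})^{p-1}=-\epsilon^{2}\Delta_{g}u_{\epsilon}(x_{\epsilon})+\bigl(c_{N}s_{g}(x_{\epsilon})\epsilon^{2}+1\bigr)u_{\epsilon}(x_{\epsilon})\geq\bigl(c_{N}s_{g}(x_{\epsilon})\epsilon^{2}+1\bigr)u_{\epsilon}(x_{\epsilon}).
$$
Dividing by $u_{\epsilon}(x_{\epsilon})>0$ yields the first inequality $u_{\epsilon}(x_{\epsilon})^{p-2}\geq c_{N}s_{g}(x_{\epsilon})\epsilon^{2}+1$.

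For the second inequality I would simply use that $c_{N}=\tfrac{N-2}{4(N-1)}>0$ and that $s_{g}(x_{\epsilon})\geq\min_{M}s_{g}$, which gives $c_{N}s_{g}(x_{\epsilon})\epsilon^{2}+1\geq c_{N}(\min s_{g})\epsilon^{2}+1$. There is no real obstacle here; the entire argument is a one-line application of the weak maximum principle plus the trivial bound $s_{g}\geq\min s_{g}$. The only subtlety worth stating explicitly is the positivity $u_{\epsilon}(x_{\epsilon})>0$, which is handled above.
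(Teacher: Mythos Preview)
Your proof is correct and follows exactly the same approach as the paper: the paper's argument (given in the sentence immediately preceding the lemma) is simply that $\epsilon^2\Delta_g u_\epsilon(x_\epsilon)\le 0$ at a maximum, whence $0\ge({\bf c}s_g(x_\epsilon)\epsilon^2+1)u_\epsilon(x_\epsilon)-u_\epsilon(x_\epsilon)^{p-1}$. Your added justifications of regularity and of $u_\epsilon(x_\epsilon)>0$ are fine and make the argument slightly more explicit than the paper's.
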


We now show that, locally, around a maximum point, low energy solutions $u_{\epsilon}$ are essentially the radial solution $U$ of equation (\ref{Yamabe}) on $\re^n$.

\begin{lem}
	\label{convergence}
	
	Let $\delta \in \re$ be such that $0<\delta<m(E)$.
	 For each $\epsilon>0$ denote by 	 $u_{\epsilon}$, $u_{\epsilon}:M\rightarrow \re$, a solution of  eq. (\ref{Yamabe}), such that $u_{\epsilon} \in \Sigma_{\epsilon,  m(E)+\delta}$. Denote  by $x_{\epsilon}$,  $x_{\epsilon}\in M$,  a maximum point of $u_{\epsilon}$. Then: 
	\begin{enumerate}
		\item Given any $\tilde R>0$ there is some $\tilde \epsilon>0$, such that if $\epsilon \in (0,\tilde \epsilon)$, then $u_{\epsilon}$ has only one maximum in    $B_g(x_{\epsilon},  \epsilon \tilde R)\subset M$.
		
		\item Given  $\eta \in (0,1)$, there is some $R_{\eta}>0$, and some $\epsilon_{\eta}>0$, such that if $\epsilon \in (0,\epsilon_{\eta})$,  then
\begin{equation}
	\label{ineq}
	J_{\epsilon}(u_{\epsilon}){\bigg|_{B_g(x_{\epsilon}, \epsilon R_{\eta})}}=\epsilon^{-n} \int_{{B_g(x_{\epsilon}, \epsilon  R_{\eta})}} \left( \frac{1}{2} \epsilon^2 |\nabla u_{\epsilon}|^2 + \frac{1}{2} \left({s_g}{\bf c}\epsilon^2 +1\right)u_{\epsilon}^2- \frac{1}{p}(u_{\epsilon}^+)^{p-1}\right)  dV_g
\end{equation}
$$> \eta \  m(E).$$

	\end{enumerate}
\end{lem}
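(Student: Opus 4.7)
The plan is to perform a blow-up analysis around the maximum point $x_\epsilon$ and identify the rescaled solution with the ground state $U$ of the limit equation on $\re^n$; both parts will then be read off from $C^2_{\mathrm{loc}}$ convergence. For $y \in B(0,r_0/\epsilon)\subset \re^n$ set $\tilde u_\epsilon(y):=u_\epsilon(\exp_{x_\epsilon}(\epsilon y))$ and $g_\epsilon(y):=(\exp_{x_\epsilon}^{*}g)(\epsilon y)$; then $g_\epsilon\to g_{\mathrm{eucl}}$ in $C^k_{\mathrm{loc}}(\re^n)$ and, by direct change of variables, equation (\ref{Yamabe}) becomes
\[
-\Delta_{g_\epsilon}\tilde u_\epsilon + \bigl({\bf c}\,s_g(\exp_{x_\epsilon}(\epsilon y))\,\epsilon^2 + 1\bigr)\tilde u_\epsilon = \tilde u_\epsilon^{p-1},
\]
while $J_\epsilon(u_\epsilon)$ restricted to $B_g(x_\epsilon,\epsilon R)$ equals the natural $g_\epsilon$-energy of $\tilde u_\epsilon$ on $B(0,R)$.

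Since $u_\epsilon\in N_\epsilon$ one has $\|u_\epsilon\|_\epsilon^2=\tfrac{2p}{p-2}J_\epsilon(u_\epsilon)<\tfrac{2p}{p-2}(m(E)+\delta)$, which yields a uniform $H^1$ bound for $\tilde u_\epsilon$ on every fixed ball $B(0,R)$. Because $N>n$ gives $p=p_N<p_n$ the equation is subcritical, so Moser/Brezis--Kato iteration produces uniform $L^\infty$ bounds on compact sets, and classical elliptic regularity upgrades these to $C^{2,\alpha}_{\mathrm{loc}}$ bounds with constants independent of $\epsilon$. Extracting a subsequence, $\tilde u_\epsilon \to \tilde u$ in $C^2_{\mathrm{loc}}(\re^n)$ with $\tilde u\ge 0$ solving $-\Delta\tilde u+\tilde u=\tilde u^{p-1}$. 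Lemma \ref{bigger} forces $\tilde u(0)\ge 1$, so the strong maximum principle gives $\tilde u>0$; since $0$ is a critical point of each $\tilde u_\epsilon$ it is also a critical point of $\tilde u$. Kwong's uniqueness of the positive $H^1$ solution of the limit equation, together with the fact that $0$ is the unique critical point of $U$, identifies $\tilde u=U$. Uniqueness of the limit promotes the subsequential convergence to full convergence $\tilde u_\epsilon\to U$ in $C^2_{\mathrm{loc}}(\re^n)$.

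For part (1), the Hessian $D^2U(0)$ is negative definite since $U$ is smooth, radial, and positive with a strict interior maximum at $0$; the $C^2$ convergence on $B(0,\tilde R)$ then forces $\tilde u_\epsilon$ to have a unique critical point (a local maximum near $0$) in $B(0,\tilde R)$ for all $\epsilon$ small, so $u_\epsilon$ has a unique maximum in $B_g(x_\epsilon,\epsilon \tilde R)$. For part (2), since $E(U)=m(E)$ and $U$, $|\nabla U|$ decay exponentially (cf.\ (\ref{ED1})--(\ref{ED2})), I pick $R_\eta$ so that
\[
\int_{B(0,R_\eta)}\Bigl(\tfrac12|\nabla U|^2+\tfrac12 U^2-\tfrac1p U^p\Bigr)\,dz \;>\; \tfrac{1+\eta}{2}\,m(E).
\]
The $C^2_{\mathrm{loc}}$ convergence of $\tilde u_\epsilon$ to $U$ and of $g_\epsilon$ to the Euclidean metric then yield $J_\epsilon(u_\epsilon)|_{B_g(x_\epsilon,\epsilon R_\eta)}\to E(U)|_{B(0,R_\eta)}$, which exceeds $\eta\,m(E)$ for $\epsilon$ small.

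The main obstacle is the uniform $L^\infty$ bound for $\tilde u_\epsilon$ on compact sets: the Moser iteration has to be executed with the $\epsilon$-dependent metric $g_\epsilon$ and coefficient ${\bf c}\,s_g\epsilon^2+1$, but since these converge in $C^k_{\mathrm{loc}}$ to $g_{\mathrm{eucl}}$ and to the constant $1$, and since $p<p_n$ is strictly subcritical, the iteration constants are uniform in $\epsilon$. A secondary subtlety is that the blow-up a priori only yields subsequential convergence; uniqueness of the limit $U$ is what upgrades this to convergence of the whole family and legitimises statements of the form ``for all $\epsilon$ sufficiently small''.
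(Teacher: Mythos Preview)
Your blow-up argument is essentially the paper's own: rescale around the maximum, use the energy bound to control the rescaled sequence in $H^1$, bootstrap to $C^2_{\mathrm{loc}}$ via subcritical elliptic regularity, identify the limit with $U$, and read off both conclusions. The paper differs only in execution---it centres the rescaling at a fixed subsequential limit $x_0$ of the $x_{\epsilon_j}$ and multiplies by a cut-off so as to obtain a uniform \emph{global} bound in $H^1(\re^n)$, whence the weak limit automatically lies in $H^1(\re^n)$.

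That last point is the one step you leave implicit and should tighten: before invoking Kwong you need $\tilde u\in H^1(\re^n)$ (equivalently $\tilde u\to 0$ at infinity), since $-\Delta u+u=u^{p-1}$ also admits the constant positive solution $u\equiv 1$ and, in principle, other non-decaying positive solutions. The fix is immediate from what you already have: the bound $\int_{B(0,R)}(|\nabla\tilde u_\epsilon|_{g_\epsilon}^2+\tilde u_\epsilon^2)\,d\mu_{g_\epsilon}\le C$ coming from $\|u_\epsilon\|_\epsilon^2\le\tfrac{2p}{p-2}(m(E)+\delta)$ is uniform in $R$, so Fatou (or weak lower semicontinuity) gives $\tilde u\in H^1(\re^n)$.
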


\begin{proof}

 Let 	 $\{u_{\epsilon_j}\}_{j \in \mathbb N}$, be any  sequnce of solutions of  eq. (\ref{Yamabe}),
 such that $\lim_{j \rightarrow \infty} \epsilon_j=0$ and such that $u_{\epsilon}\in N_{\epsilon}$ and  $J_{\epsilon}(u_{\epsilon})<m(E)+\delta$.
 Let $x_0 \in M$ be such that $x_{\epsilon_j}\rightarrow x_0 $.
	 
Consider a ball, $B_g(x_0,r)\subset M$, $r<\frac{r_0}{2}$, where $r_0$ is the injectivity radius of $M$.
Consider also the exponential function, $\exp_{x_0}$ on this ball. For $j$ big, $x_{\epsilon_j}\in B_g(x_0,r)$, 
and we define $y_j\in \re^n$ as $y_j=\exp_{x_0}^{-1}(x_{\epsilon_j})$, and the function $\bar v_j:B(0,\frac{r}{\epsilon_j}) \subset \re^n \rightarrow \re$, as 

\begin{equation}
\label{barv}
\bar v_j(z)=u_{\epsilon_j}\left(\   \exp_{x_0} (y_j+\epsilon_j z)\ \right)
\end{equation}	
	
	\noindent Note that  $\bar v_j$ is well defined on $B(0,\frac{r}{\epsilon_j}) \subset \re^n$:
	recall that $y_j \rightarrow 0$ as $\epsilon_j \rightarrow 0$, so that for $j$ big enough,  $|y_j+ \epsilon_j  z|<r_0,$ since  $r<\frac{r_0}{2}$. 

	We now extend the domain of  $\bar v_j$ to all of $\re^n$. 
	
	Let $\chi_{r} (t):\re^n\rightarrow \re$ be a smooth cut-off function such that $\chi_{r} \leq1$, $\chi_{r} (t)=1$ for $t \in [0,r/2)$, 
$\chi_{r} (t)=0$ for $t \in [r,\infty)$, and $|\chi_{r}'(t)|\leq 2/t$, for $t \in [r/2,r)$. We will write $ \chi_j(z):=\chi_{r} (|\epsilon_j z|)$,
and then we define $v_j: \re^n \rightarrow \re$ as
	\begin{equation}
	\label{defvj}
	v_j(z)=  \bar v_j(z) \ \chi_j(z)=\bar v_j(z) \ \chi_{r} (|\epsilon_j z|),
	\end{equation}
	
\noindent for $z\in B(0,\frac{r}{\epsilon_j})\subset \re^n$, and as $v_j(z)=0$, for $z \notin B(0,\frac{r}{\epsilon_j})$.
	
	We  next prove that $v_j \rightarrow U$, $C^2_{loc}(\re^n)$, where $U$ is the positive, exponentially decreasing at infinity, solution of  equation $(\ref{Kwong2})$
	on $\re^n$, and the conclusions of the lemma will follow.

	 First note  that  $v_j$ is bounded in $H^{1}(\re^n)$, independently of $j\in \mathbb N$, for $j$ big:

	$$ ||v_j||_{H^{1,2}(\re^n)}^2=\int_{\re^n}\left(|\nabla v_j|^2+v_j^2\right)dz=\int_{B(0,\frac{r}{\epsilon_j})}\left(|\nabla v_j|^2+v_j^2\right)dz,$$
	\noindent  by construction, since $v_j= \bar v_j \ \chi_j $ (eq. $(\ref{defvj})$). Also, since $\chi_j^2\leq 1$ and 
	$|\nabla\chi_j|^2\leq \frac{4^2 \epsilon_j^2}{r^2}$, we have
	
	$$\int_{B(0,\frac{r}{\epsilon_j})}\left(\chi_j^2 |\nabla \bar v_j|^2+\bar v_j^2|\nabla \chi_j|^2+\chi_j^2 \bar v_j^2\right)dz\leq 
	\int_{B(0,\frac{r}{\epsilon_j})}\left( |\nabla \bar v_j|^2+ \bar v_j^2\right)dz + \frac{4^2 \epsilon_j^2}{r^2}\int_{B(0,\frac{r}{\epsilon_j})} \bar v_j^2dz$$
	$$\leq 2  \int_{B(0,\frac{r}{\epsilon_j})}\left( |\nabla \bar v_j|^2+ \bar v_j^2\right)dz, $$
	
	\noindent for $j$ big enough. Now, using a change of variables, $y=y_j+\epsilon_j z$, and recalling the definition of $\bar v_j$ (eq. $(\ref{barv})$) we have
	$$	\int_{B(0,\frac{r}{\epsilon_j})}\left( |\nabla \bar v_j|^2+ \bar v_j^2\right)dz=
	\frac{1}{\epsilon_j^n}\int_{B(0,r)}\bigg(\epsilon_j^2 |\nabla( u_{\epsilon_j}(  \exp_{x_0} (y)) )|^2+u_{\epsilon_j}^2 (\exp_{x_0} (y))\bigg)dy.  $$
	
	We now  use coordinates on $B_g(x_0,r)=\exp_{x_0}(B(0,r))$ to write,
	
	$$\frac{1}{\epsilon_j^n}\int_{B(0,r)}\left(\epsilon_j^2 |\nabla \bar v_j|^2+ \bar v_j^2\right)dy  \leq  c\frac{1}{\epsilon_j^n}\int_{M}\big(\epsilon_j^2 |\nabla_g  u_{\epsilon_j}|^2+\left({s_g}{\bf c}\epsilon_j^2 +1\right) u_{\epsilon_j}^2\big)dV_g, $$
	
	\noindent for some $c$ independent of $j$. Finally, by recalling that $u_{\epsilon_j}$ is in $N_{\epsilon_j}$, we have
	
	$$ \frac{1}{\epsilon_j^n}\int_M \left(\epsilon_j^2 |\nabla_g  u_{\epsilon_j}|^2+ \left({s_g}{\bf c}\epsilon_j^2 +1\right)u_{\epsilon_j}^2\right)dV_g = \frac{2p}{p-2} J_{\epsilon_j} (u_{\epsilon_j}),$$

	\noindent hence, since by hypothesis $J_{\epsilon}(u_{\epsilon})<2m(E)$, we get our bound:
		$$ ||v_j||_{H^{1,2}(\re^n)}^2\leq 2 c\frac{2p}{p-2} J_{\epsilon_j} (u_{\epsilon_j}) \leq  2c \frac{2p}{p-2}  \ 2 \  m(E). $$


	Then there exists some function $w\in H^{1}(\re^n)$,  such that $v_j\rightharpoonup w$ weakly in $H^{1}(\re^n)$. Also, given $\tilde R>0$,   $v_j\rightarrow w$ strongly in $L^p(B(0,\tilde R))$ (recall that $p=\frac{2(n+m)}{n+m-2}< \frac{2n}{n-2}$, so that the Sobolev Embedding (Theorem 7.26 in \cite{Trudinger}) is continuous and compact).
	
	Now, since  $u_{\epsilon_j}$ is a solution of equation (\ref{Yamabe}), we have:
	
\begin{equation}
\label{Yamabe2}
0=-\epsilon_j^2\Delta_g u_{\epsilon_j} + \left({s_g}{\bf c}\epsilon^2 +1\right)u_{\epsilon_j}-u_{\epsilon_j}^{p-1}. 
\end{equation}
	Given $\varphi \in C_0^{\infty}(\re^n)$, take $\epsilon_j$ small enough so that $supp \ \varphi \subset B(0, \frac{1}{2}\frac{r}{\epsilon_j})$. Also, choose normal coordinates in $B_g(x_0,r)=\exp(B(0, r))$, such that $g_{il}(0)=\delta_{il}$.
	
	We use these coordinates to rewrite eq. (\ref{Yamabe2}) on $B(0, \frac{1}{2} \frac{r}{\epsilon_j})\subset \re^n$ (recall that, for $z\in B(0, \frac{r}{\epsilon_j})$,   $v_j(z)= u_{\epsilon_j}(exp_{x_0}(y_j+ \epsilon_j z))$): 
		\begin{equation}
	\label{YamabeCoord}
	0=  \frac{1}{|g(y_j + \epsilon_j z)  |^{1/2}}  \sum_{il} \partial_{z_l}\Big(|g(y_j + \epsilon_j z)  |^{1/2}  g^{il}(y_j+\epsilon_j z) \ \partial_{z_i} \ v_j(z)\Big)  + \Big({s_g}{\bf c}\epsilon^2 +1\Big)v_{j}(z)-v_{j}(z)^{p-1}
\end{equation}

	Multiplying (\ref{YamabeCoord}) by $\varphi$ and integrating on $\mathit{supp} \ \varphi \subset B(0, \frac{1}{2}\frac{r}{\epsilon_j})\subset \re^n$, yields,
	
	$$ 0=\int_{supp \ \varphi}  \left(  g^{il}(y_j + \epsilon_j z)  \ \partial_{z_i} v_j(z) \  \partial_{z_l} \varphi(z) + 
	({s_g}{\bf c}\epsilon_j^2 +1) v_j(z) \varphi(z) -  v_j(z)^{p-1} \varphi(z)\right) \Big|g(y_j + \epsilon_j z)  \Big|^{1/2} \ dz$$
	\noindent Taking into account that, for each $z\in \re^n$, 
	$$\lim_{j\rightarrow \infty} g_{il}(y_j + \epsilon_j z)  = g_{il}(0)=\delta_{il}.$$
	\noindent and taking into account the weak convergence $v_j \rightarrow w$, on $ H^{1,2}(\re^n)$,  and strong in $L^q(B(0,\tilde R))$, for $2\leq q< \frac{2n}{n-2}$, we have
			$$  0= \int_{supp \ \varphi}  \left(  \delta_{il}\partial_{z_i} w(z) \  \partial_{z_l} \varphi(z)  +w(z)\varphi(z)- w^{p-1}(z) \varphi(z) \right) \ dz,$$
			
			\noindent that is
		$$  0= \int_{supp \ \varphi}  \left(	\left<\nabla w(z),\nabla \varphi(z)\right> +w(z)\varphi(z)- w^{p-1}(z) \varphi(z) \right) \ dz.$$
	
	Thus, $w$ weakly solves 
$	-\Delta w + w=w^{p-1}$ on $\re^n$.
	
	We now use regularity theory to prove that, moreover, $v_j \rightarrow w$ in $C_{loc}^2(\re^n)$.
	
We use normal coordinates again. By equation (\ref{YamabeCoord}), for $z \in B(0,\frac{r}{\epsilon_j}$):

	$$ \frac{1}{|g(y_j + \epsilon_j z)  |^{1/2}}  \sum_{il} \partial_{z_l}\left(|g(y_j + \epsilon_j z)  |^{1/2}  g^{il}(y_j+\epsilon_j z) \ \partial_{z_i} \ v_j(z)\right)$$
	
	$$= - ({s_g}{\bf c}\epsilon^2 +1)v_{j}(z) +v_j ^{p-1}(z)$$
Given $\tilde R>0$, for $j$ big we have $\frac{r}{2 \epsilon_j}>\tilde R$.	
 We define on $B(0,\tilde 2 R) \subset \re^n$, for each $j$:
	$$f_j(z):=|g(y_j + \epsilon_j z)  |^{1/2} \left(- ({s_g}{\bf c}\epsilon^2 +1) \ v_{j}(z) + \ v_j ^{p-1}(z) \right),$$
	
	\noindent and the strictly elliptic operator, $\square$ :
	$$\square \  v_j(z)= \sum_{il} \partial_{z_l}\left(|g(y_j + \epsilon_j z)  |^{1/2}  g^{il}(y_j+\epsilon_j z) \ \partial_{z_i} \ v_j(z)\right).$$
	\noindent Note that for each $j$, $\square  v_j=f_j$.
	
	  By local elliptic regularity (see for example, Theorem 8.10 in \cite{Trudinger})
	$$|| v_j||_{H^{2,\frac{p}{p-1}}(B(0,\tilde R))}\leq C_1 \left(||v_j||_{L^{\frac{p}{p-1}}(B(0,\tilde R))} + ||f_j||_{L^{\frac{p}{p-1}}(B(0,\tilde R))} \right) $$
		\noindent for $j$ big  such that $\frac{r}{2 \epsilon_j}>\tilde R$.
On the other hand, since we have strong convergence of $v_j$ in $L^p(B(0,\tilde R))$, the $f_j$ are uniformly bounded in $L^{\frac{p}{p-1}}(B(0,\tilde R))$
(recall that $p=\frac{2(n+m)}{n+m-2}< \frac{2n}{n-2}$). And then, 
we have a uniform bound,
$$|| v_j||_{H^{2,\frac{p}{p-1}}(B(0,\tilde R))}\leq C_1 (||v_j||_{L^{\frac{p}{p-1}}(B(0,\tilde R))} + ||f_j||_{L^{\frac{p}{p-1}}(B(0,\tilde R))} ) \leq C_2.$$
	
 In turn, by the Sobolev Embedding Theorem (see for example (Theorem 7.26 in \cite{Trudinger})
this would imply that $v_j$ are uniformly bounded in $ L^{q'} (B(0,\tilde R))$ with $q'= \frac{q n}{n-2q}$ and $q=\frac{p}{p-1}$. 
And then, applying local elliptic regularity again to $\square  v_j=f_j$, the $ v_j$ would be uniformly bounded in ${H^{2,\frac{q'}{q'-1}}(B(0,\tilde R))}$.  
Since $q'>q$ in this process, we can continue this bootstrap argument to prove that for any $s$ large, $f_j$ is uniformly bounded in $  {L^{s}(B(0,\tilde R))}$ 
and then, also $ v_j$ in $H^{2,s}(B(0,\tilde R))$. 
	
	It follows, by the second part of the Sobolev Embedding Theorem (Theorem 7.26 in \cite{Trudinger}),  $v_j$ is uniformly bounded
	in $C^{0,\theta}(B(0,\tilde R))$, for $0<\theta<1$, and in consequence, $f_j$ is uniformly bounded as well.

	Then we make use of local elliptic regularity again for $\square  v_j=f_j$: by the Schauder estimates (see for example, 
	Thm 6.2 in \cite{Trudinger}) $v_j\in C^{2,\theta}(M)$ and:
	
	$$|| v_j||_{C^{2,\theta}(B(0,\tilde R))}\leq C_3 ( ||v_j||_{C^{0,\theta}(B(0,\tilde R))} + ||f_j||_{C^{0,\theta}(B(0,\tilde R))})  \leq C_4,$$
	
	\noindent this implies that $ v_j \rightarrow w$ in $C^2(B(0,\tilde R))$. And by Lemma \ref{bigger}, $w(0)\geq{\min s_g }{\bf c} \epsilon_j +1$, 
	i.e. $w\neq 0$. Of course, this implies that $w=U$, the radial, positive solution of $(\ref{Kwong2})$ on $\re^n$.

	Recall that, for $z\in B(0,\tilde R)\subset  B(0,\frac{r}{2 \epsilon_j})$,   
	$$v_j(z)=\chi_j \bar v_j(z)=\bar v_j(z)= u_{\epsilon_j}(exp_{x_0}(y_j+ \epsilon_j z)).$$ {n
We conclude that  $u_{\epsilon_j}$ has only one maximum in $B_g(x_{\epsilon_j}, \epsilon_j \tilde R)$ since	 $U$ has only one maximum and  convergence 
of $v_j$ to $U$ is $C^2$ in $B(0,\tilde R)$. Since $\tilde R$ was arbitrary, the first conclusion of the Lemma follows.

	We now prove the second part of the Lemma.  Let $\eta\in (0,1)$. Then there is some $R_{\eta}>0$  such that 
	
\begin{equation}
\int_{B(0,R_{\eta})} (|\nabla U|^2+ U^2) dz>  \eta\frac{2p}{p-2}  (m(E)).
\end{equation}

Since we have local $C^2$ convergence of $v_j$ to $U$, we have, as $j\rightarrow \infty$,
	
\begin{equation}
\label{conv}
\int_{B(0,R_{\eta})} \left(|\nabla v_{j}|^2+ \left({s_g}{\bf c}\epsilon_j^2 +1\right)v_j^2 \right)dz \rightarrow \int_{B(0,R_{\eta})} (|\nabla U|^2+ U^2) dz.
\end{equation} 

Let $s_j={s_g}{\bf c}\epsilon_j^2 +1$ (note that $s_j\rightarrow 1$ as $j\rightarrow \infty$). 

	\noindent  By construction, $v_j= \bar v_j \ \chi_j $ (eq. (\ref{defvj})). Moreover,  recall that  $\chi_j(z)=\chi_r(\epsilon_j z)=1$ 
	for $\epsilon_j z\leq \frac{r}{2}$ i.e. for $ z\leq \frac{r}{2 \epsilon_j}$. That is for $j$ big enough,			
	$\chi_j(z)=1$ in $B(0,R_{\eta})$ and then
				$$ \int_{B(0,R_{\eta})} \left(|\nabla v_{j}|^2+ s_j v_j^2 \right)dz =\int_{B(0,R_{\eta})}\left( |\nabla \bar v_j|^2+s_j  
				\bar v_j^2\right)dz.$$
			
		 Using a change of variables, $y=y_j+\epsilon_j z$, and recalling the definition of $\bar v_j$ (eq. (\ref{barv})) we have
	\begin{equation}
	\label{vj}
	\int_{B(0,R_{\eta})}\left( |\nabla  v_j|^2+ s_j  v_j^2\right)dz=
	\frac{1}{\epsilon_j^n}\int_{B(y_j,R_{\eta}\epsilon_j)}\bigg(\epsilon_j^2 |\nabla( u_{\epsilon_j}(  \exp_{x_0} (y)) )|^2+s_j u_{\epsilon_j}^2 (\exp_{x_0} (y))\bigg)dy.
	\end{equation}

	We make use again of normal coordinates in $B_g(x_0, r)=\exp_{x_0}(B(0, r))$, such that $g_{il}(0)=\delta_{il}$. By continuity, of $g_{il}$, for each $\alpha>0$, there is some $j_0$ such that 
	\begin{equation}
	\label{normal}
	(1+\alpha)^{-1} \delta_{il} dx^i dx^l \leq g^{il}dx^idx^l\leq (1+\alpha)\delta_{il}dx^idx^l
	\end{equation}
	$$(1+\alpha)^{-1} dy \leq dV_g\leq (1+\alpha)dy$$
	
	\noindent in $B(y_{j},\epsilon_j R_{\eta})$, for $j>j_0$. That is, we may construct a sequence $\{\alpha_j\}_{j\in \mathbb n}$, such that $\alpha_j \rightarrow 0$ and 
		\begin{equation}
	\label{normalj}
	(1+\alpha_j)^{-1} \delta_{il} dx^i dx^l \leq g^{il}dx^idx^l\leq (1+\alpha_j)\delta_{il}dx^idx^l
	\end{equation}
	$$(1+\alpha_j)^{-1} dy \leq dV_g\leq (1+\alpha_j)dy$$
	
	\noindent in $B(y_{j},\epsilon_j R_{\eta})$. And then

		$$	\frac{1}{\epsilon_j^n}	\int_{B_g(x_j,R_{\eta}\epsilon_j)}\bigg(\epsilon_j^2 |\nabla_g( u_{\epsilon_j}(  \exp_{x_0} (y)) )|^2+s_j u_{\epsilon_j}^2 (\exp_{x_0} (y))\bigg)dV_g$$
		$$\geq
(1+\alpha_j)^{-2}\frac{1}{\epsilon_j^n}\int_{B(y_j,R_{\eta}\epsilon_j)}\bigg(\epsilon_j^2 |\nabla( u_{\epsilon_j}(  \exp_{x_0} (y)) )|^2+s_j u_{\epsilon_j}^2 (\exp_{x_0} (y))\bigg)dy.  $$

This last inequality, and (\ref{vj}) yield,

$$ J_{\epsilon_j}(u_{\epsilon_j})\Big|_{B_g(x_j, \epsilon_j R_{\eta})} = \left(\frac{1}{2}-\frac{1}{p}\right)\frac{1}{\epsilon_j^n} \int_{B_g(x_j,\epsilon_j 
R_{\eta})} \left(\epsilon_j^2|\nabla u_{\epsilon_j}|^2+ \left({s_g}{\bf c}\epsilon_j^2 +1\right)u_{\epsilon_j}^2 \right)dV_g$$

$$\geq \frac{p-2}{2p} (1+\alpha_j)^{-2} \frac{1}{\epsilon_j^n} \int_{B(y_j,R_{\eta}\epsilon_j)}\bigg(\epsilon_j^2 |\nabla( u_{\epsilon_j}
(  \exp_{x_0} (y)) )|^2+s_j u_{\epsilon_j}^2 (\exp_{x_0} (y))\bigg) dy  $$

	$$ = \frac{p-2}{2p} (1+\alpha_j)^{-2}  \int_{B(0,R_{\eta})} \left(|\nabla v_{j}|^2+ \left({s_g}{\bf c}\epsilon_j^2 +1\right)v_j^2 \right)dz.$$
	
	Recall that, by (\ref{conv}) and (\ref{normalj}), 
	
$$	\lim_{ j \rightarrow \infty } \frac{p-2}{2p} (1+\alpha_j)^{-2}  \int_{B(0,R_{\eta})} \left(|\nabla v_{j}|^2+ \left({s_g}{\bf c}\epsilon_j^2 +1\right)v_j^2 \right)dz$$
	$$ = \frac{p-2}{2p} \int_{B(0,R_{\eta})} (|\nabla U|^2+ U^2) dz> \eta \ m(E). $$

	We conclude that for $j$ big enough,
	
	$$ J_{\epsilon_j}(u_{\epsilon_j}) \Big|_{B_g(x_j, \epsilon_j R_{\eta})} > \eta \ m(E).$$
	
}		\end{proof}

Using lemma \ref{convergence}, we prove Theorem 1.2

\begin{proof}

If the theorem were not true, there would exist a sequence $\epsilon_j\rightarrow 0$ and corresponding solutions $u_{\epsilon_j}$ with at least two local maxima. We will denote them
by $x^1_{\epsilon_j}$ and $x^2_{\epsilon_j}$. 


Since $m(E)+\delta <2 {m(E)}$, then $\frac{1}{2}+\frac{\delta}{2 m(E)}<1$. Let  $\eta>0$, be such that  $\frac{1}{2}+\frac{\delta}{2 m(E)}<\eta<1$. For this $\eta$ there is some $R_{\eta}$ and $\epsilon_{\eta}$, as in part (2) of Lemma \ref{convergence}.
Let $\tilde{R}=2 R_{\eta}$ as in part (1) of Lemma \ref{convergence}, with its corresponding $\tilde \epsilon$. Let $\epsilon_0=\min\{\epsilon_{\eta},\tilde \epsilon\}$.

	Then, we consider the open balls  $B_g(x_{\epsilon_j}^i, R_{\eta} \epsilon_j)$, $i=1,2$, such that $\epsilon_j< \epsilon_{0}$.  We will denote these balls by $B_j^1$ and $B_j^2$ respectively. 
	We consider the following cases. 
	
	Case 1. There is a subsequence $\epsilon_j\rightarrow 0$, such that $d(x^1_{\epsilon_j},x^2_{\epsilon_j})< \tilde R\epsilon_j$.
	
	This case is ruled out explicitly by Lemma \ref{convergence}, since it would imply that both maxima, $x^1_{\epsilon_j}$ and $x^2_{\epsilon_j}$, are contained in a ball  $B_g(x_{\epsilon_j}^1, \tilde R \epsilon_j)$.

	Case 2.  For each subsequence such that  $\epsilon_j\rightarrow 0$, we have $d(x^1_{\epsilon_j},x^2_{\epsilon_j})\geq \tilde R\epsilon_j$.
	
	In this case we have $B^1_j \cap B^2_j= \varnothing$. And thus, by Lemma \ref{convergence},
	
$$	J_{\epsilon}(u_{\epsilon_j})\geq J_{\epsilon}(u_{\epsilon_j})\Big|_{B_j^1}+J_{\epsilon}(u_{\epsilon_j})\Big|_{B_j^2}>\eta\  m(E)+ \eta\  m(E) =2\eta\  m(E)>m(E)+\delta,$$
	

\noindent  we reach a contradiction to a hypothesis of Theorem 1.2.

	

	\end{proof}

\section{Proof of Proposition 2.1}

In this section we sketch a proof for the finite dimensional reduction, Proposition 2.1. A detailed proof in a similar situation can
be found in  \cite{Dancer, Micheletti}.

\begin{proof}
	Recall the operator
	$$S_{\epsilon} = \nabla J_{\epsilon} : H_{\epsilon} \rightarrow H_{\epsilon}.$$

and eq. (\ref{Pi})	
	\begin{equation}
	\Pi_{\epsilon,\overline{\xi}}^{\perp}\{ S_{\epsilon} ( V_{\epsilon,\overline{\xi}}+\phi ) \}=0 .
	\end{equation}

We may  rewrite eq. (\ref{Pi})	as
	
	$$0=	\Pi_{\epsilon,\overline{\xi}}^{\perp}\{ S_{\epsilon} ( V_{\epsilon,\overline{\xi}}+\phi ) \}= \Pi_{\epsilon,\overline{\xi}}^{\perp}\{ S_{\epsilon} ( V_{\epsilon,\overline{\xi}} ) +  S'_{\epsilon} ( V_{\epsilon,\overline{\xi}} ) \  \phi+ \bar N_{\epsilon,\bar \xi} (\phi)\}
	=-R_{\epsilon,\bar \xi}+ L_{\epsilon,\bar \xi}(\phi)- N_{\epsilon,\bar \xi}(\phi) .$$
	with the first term being independent of $\phi$:
		$$R_{\epsilon,\bar \xi}:=\Pi_{\epsilon,\overline{\xi}}^{\perp}\{ S_{\epsilon} ( V_{\epsilon,\overline{\xi}} )\}=\Pi^{\perp}_{\epsilon,\bar \xi}\{ i^*_{\epsilon}[f(V_{\epsilon,\bar \xi})]-V_{\epsilon,\bar \xi} \},$$
		
\noindent	the second term, the linear operator:	
		$$L_{\epsilon,\bar \xi}(\phi)=\Pi_{\epsilon,\overline{\xi}}^{\perp}\{   S'_{\epsilon} ( V_{\epsilon,\overline{\xi}} ) \  \phi \}=\Pi^{\perp}_{\epsilon,\bar \xi}\{ \phi -i^*_{\epsilon}[f'(V_{\epsilon,\bar \xi})\phi] \} $$

\noindent			and the last term a  remainder:
		$$N_{\epsilon,\bar \xi}(\phi):=\Pi^{\perp}_{\epsilon,\bar \xi}\{  \bar N_{\epsilon,\bar \xi} (\phi)\}=\Pi^{\perp}_{\epsilon,\bar \xi}\{i^*_{\epsilon}[f(V_{\epsilon,\bar \xi}+\phi)-f(V_{\epsilon,\bar \xi})-f'(V_{\epsilon,\bar \xi})\phi] \}.$$
		
		Hence, eq. (\ref{Pi}) can be written as
	$$L_{\epsilon,\bar \xi}(\phi)=N_{\epsilon,\bar \xi}(\phi)+R_{\epsilon,\bar \xi}.$$

	And then, if $L$ is invertible,  we may turn  eq. (\ref{Pi}), into a fixed point problem, 
	 for the operator $T_{\epsilon,\bar \xi}(\phi):=L^{-1}_{\epsilon,\bar \xi}(N_{\epsilon,\bar \xi}(\phi)+R_{\epsilon,\bar \xi})$. 
	
	
	We start by proving that $L_{\epsilon,\bar{\xi}}$ is in fact invertible, for appropriate $\bar \xi$ and $\epsilon$.
	
	\begin{lem}
		\label{L_inversible}
		There exists  $\epsilon_0>0$ and $c>0$, such that for any  $\epsilon \in (0,\epsilon_0)$ and $\bar \xi \in M^K$, $\bar \xi =(\xi_1,\xi_2,...,\xi_K)$, such that 
		$$ \sum_{i,k= 1,i\neq k}^{K} U\left(\frac{\exp_{\xi_i}^{-1}\xi_k}{\epsilon}\right)<\epsilon^2,$$
		we have, 
		
		$$||L_{\epsilon,\bar{\xi}}(\phi)||_{\epsilon}\geq  c ||\phi||_{\epsilon},$$
		
		\noindent for any $\phi \in K^{\perp}_{\epsilon,\bar \xi}$.
		
	\end{lem}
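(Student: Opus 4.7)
The plan is to proceed by contradiction, following the standard Lyapunov--Schmidt blow-up scheme. Suppose the conclusion fails. Then there exist sequences $\epsilon_n\to 0$, $\bar{\xi}_n=(\xi_1^n,\dots,\xi_K^n)$ satisfying the separation hypothesis, and $\phi_n\in K^\perp_{\epsilon_n,\bar{\xi}_n}$ with $\|\phi_n\|_{\epsilon_n}=1$ but $\|L_{\epsilon_n,\bar{\xi}_n}(\phi_n)\|_{\epsilon_n}\to 0$. Writing $L_{\epsilon,\bar{\xi}}(\phi)=\phi-\Pi^\perp_{\epsilon,\bar{\xi}} \iota^*_\epsilon[f'(V_{\epsilon,\bar{\xi}})\phi]$ and using (\ref{norma_ep}) together with the Sobolev bound (\ref{norma_q}), the task reduces to controlling the action of $\iota^*_\epsilon\circ f'(V_{\epsilon,\bar{\xi}})$ on $\phi_n$. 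In particular, I would first extract scalars $c^{i,j}_n\in\mathbb{R}$ such that
\[
\phi_n - \iota^*_{\epsilon_n}[f'(V_{\epsilon_n,\bar{\xi}_n})\phi_n]=L_{\epsilon_n,\bar{\xi}_n}(\phi_n)+\sum_{i,j} c^{i,j}_n Z^i_{\epsilon_n,\xi_j^n},
\]
and, testing this identity against each $Z^i_{\epsilon_n,\xi^n_j}$ while using (\ref{Zeta}) and the asymptotic near-orthogonality coming from the separation assumption (\ref{distance properties2}), I would bound $|c^{i,j}_n|=o(1)$.

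Next I would perform the blow-up around each concentration point. For each $j$, define the rescaled function
\[
\widetilde{\phi}^j_n(z):=\phi_n\!\left(\exp_{\xi_j^n}(\epsilon_n z)\right)\chi_r(\epsilon_n z),\qquad z\in\mathbb{R}^n.
\]
The normalization $\|\phi_n\|_{\epsilon_n}=1$ together with Lemma~2.2 (the Taylor expansion of $g_{\xi}$) translates into $\|\widetilde{\phi}^j_n\|_{H^1(\mathbb{R}^n)}\le C$, so, up to a subsequence, $\widetilde{\phi}^j_n\rightharpoonup \widetilde{\phi}^j$ weakly in $H^1$ and strongly in $L^p_{\mathrm{loc}}$. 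The equation $L_{\epsilon_n,\bar{\xi}_n}(\phi_n)\to 0$ and the pointwise convergence $V_{\epsilon_n,\bar{\xi}_n}(\exp_{\xi_j^n}(\epsilon_n\cdot))\to U$ (the cross contributions from the other peaks vanish by the separation hypothesis and (\ref{ED1})) then yield
\[
-\Delta \widetilde{\phi}^j + \widetilde{\phi}^j = (p-1)U^{p-2}\widetilde{\phi}^j\quad\text{in }\mathbb{R}^n.
\]
The orthogonality $\langle \phi_n,Z^i_{\epsilon_n,\xi_j^n}\rangle_{\epsilon_n}=0$ rescales, in the limit, to $\int_{\mathbb{R}^n}\nabla \widetilde{\phi}^j\cdot\nabla\psi^i +\widetilde{\phi}^j\psi^i=0$ for $i=1,\dots,n$. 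Non-degeneracy of $U$ (the kernel of the linearization is spanned by $\psi^i=\partial U/\partial x_i$) then forces $\widetilde{\phi}^j\equiv 0$ for every $j$.

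The final and most delicate step is converting this local vanishing into a contradiction with $\|\phi_n\|_{\epsilon_n}=1$. I would split $M$ into the geodesic balls $B_g(\xi^n_j,\delta)$ and the complement $\Omega_n$. Inside each ball, the change of variables $z=\exp^{-1}_{\xi_j^n}(x)/\epsilon_n$ together with the strong $L^p_{\mathrm{loc}}$ convergence of $\widetilde{\phi}^j_n$ to $0$ shows that $|\phi_n|_{p,\epsilon_n}$ contributed by $B_g(\xi^n_j,\delta)$ tends to $0$. On $\Omega_n$, since $V_{\epsilon_n,\bar{\xi}_n}$ decays exponentially away from the peaks by (\ref{ED1}), the potential $f'(V_{\epsilon_n,\bar{\xi}_n})=(p-1)V_{\epsilon_n,\bar{\xi}_n}^{p-2}$ is uniformly small, so testing $L_{\epsilon_n,\bar{\xi}_n}(\phi_n)$ against $\phi_n$ and using the bound on the $c^{i,j}_n$ gives
\[
\|\phi_n\|_{\epsilon_n}^2=\langle \iota^*_{\epsilon_n}[f'(V_{\epsilon_n,\bar{\xi}_n})\phi_n],\phi_n\rangle_{\epsilon_n}+o(1)\longrightarrow 0,
\]
contradicting $\|\phi_n\|_{\epsilon_n}=1$. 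The main obstacle is precisely this last step: controlling the $L^p_{\epsilon_n}$-norm of $\phi_n$ on the region where $V_{\epsilon_n,\bar{\xi}_n}$ is not small must be done peak-by-peak via the rescaling, and one must verify carefully that the cross terms produced by the separation between the peaks, as well as the curvature correction $\epsilon_n^2 {\bf c}\, s_g$ in the norm, enter only through lower-order terms that are absorbed by $o(1)$. Once this is established, Proposition~\ref{prop1} follows from the contraction mapping theorem applied to $T_{\epsilon,\bar{\xi}}$, estimating $\|R_{\epsilon,\bar{\xi}}\|_{\epsilon}$ through Lemma~2.3 and the interaction estimates from Section~3.
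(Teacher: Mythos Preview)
Your proposal is correct and follows essentially the same contradiction/blow-up scheme as the paper: extract the kernel coefficients and show they are $o(1)$, rescale around each peak to obtain a weak limit solving the linearized equation in $\mathbb{R}^n$, use the orthogonality to $\psi^i$ together with non-degeneracy of $U$ to conclude the limit vanishes, and then derive the contradiction by splitting $\frac{1}{\epsilon_n^n}\int_M f'(V_{\epsilon_n,\bar\xi_n})\phi_n^2$ into near-peak and far-peak regions. The only cosmetic difference is that the paper introduces the auxiliary function $u_j=\iota^*_{\epsilon_j}[f'(V_{\epsilon_j,\bar\xi_j})\phi_j]$ and phrases the contradiction as $\int f'(V)u_j^2\to 1$ versus $\to 0$, whereas you work directly with $\phi_n$; note also that in your final step what actually tends to zero on each ball is $\epsilon_n^{-n}\int_{B_g(\xi_j^n,\delta)} f'(V_{\epsilon_n,\bar\xi_n})\phi_n^2$ (thanks to the exponential decay of $f'(U)$), not $|\phi_n|_{p,\epsilon_n}$ itself.
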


	\begin{proof}
		We will proceed by contradiction. Suppose that there are sequences $\{\epsilon_j\}_{j \in \mathbb{N}}$, $\epsilon_j \rightarrow 0$ and $\{\bar \xi_j\}_{j \in \mathbb{N}}$, $\bar \xi_j =(\xi_{1_j},\xi_{2_j},...,\xi_{K_j})$, such that 
		
		$$ \sum_{i,k= 1,i\neq k}^{K} U\left(\frac{\exp_{\xi_{i_j}}^{-1}\xi_{k_j}}{\epsilon_j}\right)<\epsilon_j^2,$$
		and  $\{\phi_j \}\subset K^{\perp}_{\epsilon,\bar \xi}$, such that 
		$L_{\epsilon_j,\bar{\xi_j}}(\phi_j)=\psi_j$, with $||\phi_j||_{\epsilon_j}=1$ and  $||\psi_j||_{\epsilon_j}\rightarrow 0.$

		Let $\zeta_j:=\Pi_{\epsilon_j,\bar \xi_j}\{ \phi_j -i^*_{\epsilon_j}[f'(V_{\epsilon_j,\bar \xi_j})\phi_j] \}$. Hence,
		
	\begin{equation}\label{phi_j}
	\phi_j -i^*_{\epsilon_j}[f'(V_{\epsilon_j,\bar \xi_j})\phi_j]=\psi_j+\zeta_j.
	\end{equation}

		\noindent	That is, for each $j$, $\psi_j\in K^{\perp}_{\epsilon_j,\bar \xi_j}$ and $ \zeta_j \in K_{\epsilon_j,\bar \xi_j}$. Now, let $u_j:= \phi_j-(\psi_j+\zeta_j).$
		
		We will prove the following contradictory consequences of the existence of such series:
		
		\begin{equation}
		\label{one}
		\frac{1}{\epsilon_j^n} \int_{M} f'(V_{\epsilon_j,\bar {\xi}_j}) u_j^2 \ d\mu_{g}\rightarrow 1,
		\end{equation}
		\noindent and 
		\begin{equation}
		\label{zero}
		\frac{1}{\epsilon_j^n} \int_{M} f'(V_{\epsilon_j,\bar \xi_j}) u_j^2 \  d\mu_{g} \rightarrow 0,	
		\end{equation}

		\noindent this will prove that such  sequences $\{\bar \xi_j\}$, $\{ \phi_j\}$, $\{\epsilon_j\}$ cannot exist.  	
		We start by proving (\ref{one}).

		First we note that:

\begin{equation}\label{zeta_j}
\Vert\zeta_{j}\Vert_{\epsilon_{j}}\rightarrow 0 \hbox{ as }  j\rightarrow\infty. 
\end{equation}
Since $\zeta_{j}\in K_{\epsilon_{j},\bar{\xi}_j}$ , let $\zeta_{j} :=\displaystyle\sum_{i=1}^{K}\sum_{k=1}^{n}a_{j}^{ki}Z_{\epsilon_{j},\xi_{i_j}}^{k}$. 
For any $h \in \{1,2,\dots, n\}$ , and $l \in \{1,2,\dots,K\}$ we multiply $\psi_{j}+\zeta_j$ (eq. (\ref{phi_j})) by $Z_{\epsilon_{j},\xi_{{l}_j}}^{h}$, and we find

\begin{equation}\label{product}
\begin{array}{lcc}
\displaystyle\sum_{i=1}^{K} \sum_{k=1}^{n}a_{j}^{ki}\langle Z_{\epsilon_{j},\xi_{i_j}}^{k},\ Z_{\epsilon_{j},\xi_{l_j}}^{h}\rangle_{\epsilon_{j}}
&=&\langle \phi_{j}, \ Z_{\epsilon_{j},\xi_{l_j}}^{h} \rangle_{\epsilon_{j}} - 
\langle \io_{\epsilon_{j}}^{*}[ f'(V_{\epsilon_j,\bar {\xi}_j}) \phi_{j}],\ Z_{\epsilon_{j},\xi_{l_j}}^{h}\rangle_{\epsilon_{j}}. \\
\end{array}
\end{equation}

On the other hand, by (\ref{Zeta}),

$$\displaystyle\sum_{i=1}^{K} \sum_{k=1}^{n}a_{j}^{ki}\langle Z_{\epsilon_{j},\xi_{i_j}}^{k},\ Z_{\epsilon_{j},\xi_{l_j}}^{h}\rangle_{\epsilon_{j}}=Ca_{j}^{hl}+o(1),$$

\noindent combining this and (\ref{product}):
\begin{equation}\label{a_j1}
Ca_{j}^{hl}+o(1)=\displaystyle\frac{1}{\epsilon_{j}^{n}}\displaystyle\int_{M}[\epsilon_{j}^{2}\nabla_{g}Z_{\epsilon_{j},\xi_{l_j}}^{h}\nabla_{g}\phi_{j}+
(\ep_j^2 {\bf c} s_g +1) Z_{\epsilon_{j},\xi_{j_{j}}}^{h}\phi_{j}-f'(V_{\epsilon_j,\bar {\xi}_j}) \phi_{j}Z_{\epsilon_{j},\xi_{l_j}}^{h}] \ d\mu_{g}. \\
\end{equation}
Let
\begin{equation}
\tilde{\phi_l}_{j}(z)=
\left\{\begin{array}{lcc}
\phi_{l_j}(\exp_{\xi_{l_j}}(\epsilon_{j}z)\chi_{r}(\epsilon_{j}z)) & \mathrm{i}\mathrm{f}\ z\in B(0,\ r/\epsilon_{j})\ ;\\
0 & \mathrm{o}\mathrm{t}\mathrm{h}\mathrm{e}\mathrm{r}\mathrm{w}\mathrm{i}\mathrm{s}\mathrm{e},
\end{array}\right.
\end{equation}

Then we have that  for some constant $\tilde c$, $\Vert\tilde{\phi_l}_{j}\Vert_{H^{1}(\mathbb{R}^{n})}\leq \tilde c\Vert \tilde \phi_{l_j}\Vert_{\epsilon_{j}}\leq \tilde c$. 
Therefore, we can assume that $\tilde{\phi_l}_{j}$ converges weakly to some $\tilde{\phi}$  in $H^{1}(\mathbb{R}^{n})$ and 
strongly in $L_{loc}^{q}(\mathbb{R}^{n})$ for any $q\in[2,p_n)$. Also note that,

$$| \displaystyle\frac{1}{\epsilon_{j}^{n}} \int_{M} \ep_j^2  s_g  Z_{\epsilon_{j},\xi_{l_j}}^{h}\phi_{j} \  d\mu_{g} |\leq  \ep_j^2  \  c_1 \  | \  \displaystyle \int_{B\left(0,\frac{r}{\epsilon_j}\right)}  \psi^h(z) \chi_r(\epsilon_j z) \tilde \phi_{l_j}(z)\  |g_{\xi_{{l}_j}(\epsilon_{j} z)}|^{1/2}\  dz|$$
$$= \ \ep_j^2 \  c_1\left(\int_{\re^n} \psi^h \ \tilde \phi \ dz +o(1)\right)\leq  \ \ep_j^2 \  c_1\left(\int_{\re^n} (\psi^h)^2 \ dz \right)^{1/2}\left(\int_{\re^n} \tilde \phi^2 \ dz \right)^{1/2}+ o(\ep_j^2)$$
\begin{equation}
\label{sgeps}
 \leq c_1 \  c_2 \  \ep_j^2 \ ||\tilde \phi||_{L^2(\re^n)}  + o(\ep_j^2)\leq \ c_1 \ c_2 \ c_3 \  \ep_j^2 + o(\ep_j^2) =  o(\ep_j),
 \end{equation}

\noindent where $c_1$ is an upper bound for $s_g$, $c_2$ for $|| \nabla U||_{L^2(\re^n)}\ $ and $ c_3 \ $ for  $|| \tilde \phi||_{L^2(\re^n)}\ $.

Then we have, by eqs. (\ref{a_j1}) and (\ref{sgeps})
\begin{equation}\label{a_j2}
\begin{array}{lcc}
&Ca_{j}^{hl}+o(1)&=\displaystyle\frac{1}{\epsilon_{j}^{n}}\displaystyle\int_{M}[\epsilon_{j}^{2}\nabla_{g}Z_{\epsilon_{j},\xi_{l_j}}^{h}\nabla_{g}\phi_{j}+
(\ep_j^2 {\bf c}s_g +1) Z_{\epsilon_{j},\xi_{l_j}}^{h}\phi_{j}-f'(V_{\epsilon_j,\bar {\xi}_j})\phi_{j}Z_{\epsilon_{j},\xi_{l_j}}^{h}] \ d\mu_{g} \\
&=&\displaystyle\frac{1}{\epsilon_{j}^{n}}\displaystyle\int_{M}[\epsilon_{j}^{2}\nabla_{g}Z_{\epsilon_{j},\xi_{l_j}}^{h}\nabla_{g}\phi_{j}+
 Z_{\epsilon_{j},\xi_{l_j}}^{h}\phi_{j}-f'(V_{\epsilon_j,\bar {\xi}_j})\phi_{j}Z_{\epsilon_{j},\xi_{l_j}}^{h}] \ d\mu_{g} +o(\epsilon_{j}) \\
&=&\displaystyle\int_{\mathbb{R}^{n}}(\nabla\psi^{h}\nabla\tilde{\phi}+\psi^{h}\tilde{\phi}-f'(U)\psi^{h}\tilde{\phi})\ dz  +o(1)=o(1).\\

\end{array}
\end{equation}

From $(\ref{a_j2})$, we get that $a_{j}^{hl}\rightarrow 0$ for any $h=1, \cdots, n$, and any $l=1, \cdots, K$ and then $(\ref{zeta_j})$ follows.
We are ready to prove (\ref{one}).

Recall that $u_j= \phi_j-(\psi_j+\zeta_j)$, since $\Vert\phi_{j}\Vert_{\epsilon_{j}}=1, \Vert\psi_{j}\Vert_{\epsilon_{j}}\rightarrow 0$ and $\Vert\zeta_{j}\Vert_{\epsilon_{j}}\rightarrow 0$ 
then
\begin{equation}
\Vert u_{j}\Vert_{\epsilon_{j}}\rightarrow 1.
\end{equation}
Moreover, by (\ref{phi_j}) $u_{j}=\io_{\epsilon_{j}}^{*}[f'(W_{\epsilon_{j},\xi_{j}})\phi_{j}]$, hence, by (\ref{Adjoint}), it satisfies weakly
\begin{equation}\label{u_j solves}
-\epsilon_{j}^{2}\triangle_{g} u_{j}+(\ep_j^2 {\bf c} s_g+1)u_{j}=f'(V_{\epsilon_j,\bar {\xi}_j})u_{j}  
+f'(V_{\epsilon_j,\bar {\xi}_j})(\psi_{j}+\zeta_{j}) \hbox{ in } M.
\end{equation} 
Multiplying (\ref{u_j solves}) by $u_{j}$, and integrating over $M$,
\begin{equation}
\label{85}
\displaystyle \Vert u_{j}\Vert_{\epsilon_{j}}^{2}=\frac{1}{\epsilon_{j}^{n}}\int_{M}f'(V_{\epsilon_j,\bar {\xi}_j})u_{j}^{2} \ d\mu_{g}+
\frac{1}{\epsilon_{j}^{n}}\int_{M}f'(V_{\epsilon_j,\bar {\xi}_j})(\psi_{j}+\zeta_{j})u_{j} \ d\mu_{g}
\end{equation}

By H\"{o}lder's inequality and eq. (23) we can find  eq. (\ref{one}):

$$|\displaystyle \frac{1}{\epsilon_{j}^{n}}\int_{M}f'(V_{\epsilon_j,\bar {\xi}_j})(\psi_{j}+\zeta_{j})u_{j} \ d\mu_{g}|$$
$$\leq \left(\displaystyle \frac{1}{\epsilon_{j}^{n}}\int_{M}(f'(V_{\epsilon_j,\bar {\xi}_j})\ u_{j})^2 \ d\mu_{g}\right)^{\frac{1}{2}} \left(\displaystyle \frac{1}{\epsilon_{j}^{n}}\int_{M}(\psi_{j}+\zeta_{j})^2\ d\mu_{g}\right)^{\frac{1}{2}}$$

$$\leq c  \  ||u_{j}||_{\epsilon_j} \ ||\psi_{j}+\zeta_{j}||_{\epsilon_j} = o(1),$$

\noindent   since $\Vert\psi_{j}\Vert_{\epsilon_{j}}\rightarrow 0$, $\Vert\zeta_{j}\Vert_{\epsilon_{j}}\rightarrow 0$, and $\Vert u_{j}\Vert_{\epsilon_{j}}\rightarrow 1$ as $j\rightarrow\infty$. We conclude from eq. (\ref{85}) that 	$\frac{1}{\epsilon_j^n} \int_{M} f'(V_{\epsilon_j,\bar {\xi}_j}) u_j^2 \ d\mu_{g}\rightarrow 1$.

Finally, we prove eq. (\ref{zero}).

Given $l \in \{1,\cdots , K\}$, we define
$$
\tilde{u}_{l_j}=u_{j}\left(\exp_{\xi_{l_j}}(\epsilon_{j}z)\right)  \ \chi_{r}\left(\exp_{\xi_{{l}_j}}(\epsilon_{j}z)\right), \  z\in \re^n\ .
$$
Note that $\Vert\tilde{u}_{l_j}\Vert_{H^{1}(\mathbb{R}^{n})}^{2}\leq c\Vert {u}_{j}\Vert_{\epsilon_{j}}^{2}\leq c$.
Then, up to a subsequence, $\tilde{u}_{l_j}\rightarrow\tilde{u_l}$ weakly in $H^{1}(\mathbb{R}^{n})$ and 
strongly in $L_{loc}^{q}(\mathbb{R}^{n})$ for any $q\in[2,\ p_n)$, for some $\tilde u_l \in H^{1}(\mathbb{R}^{n}) $ . 

We now claim that $\tilde{u_l}$ solves weakly the problem
\begin{equation}\label{u tilde}
-\triangle\tilde{u_l}+\tilde{u_l}=f'(U)\tilde{u_l} \hspace{0.5cm} \hbox{ in }\hspace{0.5cm}  \mathbb{R}^{n}.
\end{equation}

Let $\varphi \in \C_0(\re^n)$. Set $\varphi_j(x):=\varphi\left(\frac{\exp^{-1}_{\xi_{{l}_j}}(x)}{\epsilon_j}\right)  \ \chi_{r}\left(\exp^{-1}_{\xi_{{l}_j}}(x)\right)$, for  $x$ in $B(\xi_{l_j},\epsilon_j \  R) \subset M$. For $R$ big enough such that $\mathit{supp} \  \varphi \subset B(0,R)$ and $j$ big enough such that $B(\xi_{l_j},\epsilon_j \  R) \subset B(\xi_{l_j},r)$.

Multiplying  $(\ref{u_j solves})$ by $\varphi_j$ and integrating over $M$, 

$$\frac{1}{\epsilon^n} \int_M \left( \epsilon_j^2 \nabla_g u_j \ \nabla_g \varphi_j + (1+ s_g {\bf c}\epsilon_j^2) \ u_j \varphi_j \right) d {\mu_g}$$

$$=\frac{1}{\epsilon_{j}^{n}}\int_{M}f'(V_{\epsilon_j,\bar {\xi}_j}) \ u_{j} \  \varphi_j \ d\mu_{g}+
\frac{1}{\epsilon_{j}^{n}}\int_{M} \ f'(V_{\epsilon_j,\bar {\xi}_j})(\psi_{j}+\zeta_{j}) \  \varphi_j  \ d\mu_{g}.$$

We may rewrite this equation in $\re^n$ by setting $x=\exp_{\xi_{{l}_j}}(\epsilon_j \ z)$:

$$ \int_{B(0,R)} \left( \sum_{s,t=1}^{n} g^{st}_{\xi_{{l}_j}}(\epsilon_j z) \frac{\partial \tilde u_l{_j}}{\partial {z_s}}  \  \frac{\partial \varphi}{\partial {z_t}}    + (1+ s_g {\bf c}\epsilon_j^2) \tilde u_{l_j} \varphi \right) |g_{\xi_{{l}_j}}(\epsilon_{j} z)|^{1/2} dz$$

$$= \int_{B(0,R)} f'\left( U(z) \chi_r(\epsilon_j z)+ \sum_{i\neq l} U\left(\frac{\exp^{-1}_{\xi_{{l}_j}} \exp_{\xi_{{i}_j}}(\epsilon_{j} z)}{\epsilon_j}\right) \chi_r \left(\exp^{-1}_{\xi_{{l}_j}} \exp_{\xi_{{i}_j}}(\epsilon_{j} z)\right)  \right)$$ 
$$\tilde u_{l_j} \  \varphi \  |g_{\xi_{{l}_j}(\epsilon_{j} z)}|^{1/2} dz$$

\begin{equation}
\label{int}
+ \int_{B(0,R)} f'\left( U(z) \chi_r(\epsilon_j z)+ \sum_{i\neq l} U\left(\frac{\exp^{-1}_{\xi_{{l}_j}} \exp_{\xi_{{i}_j}}(\epsilon_{j} z)}{\epsilon_j}\right) \chi_r \left(\exp^{-1}_{\xi_{{l}_j}} \exp_{\xi_{{i}_j}}(\epsilon_{j} z)\right)  \right)
\end{equation}
$$ (\tilde \psi_j +\tilde \zeta_j) \  \varphi \  |g_{\xi_{{l}_j}} (\epsilon_{j} z)|^{1/2} dz,$$

\noindent where $\tilde \psi_{j}(z):=\psi_{j} (\exp_{\xi_{{l}_j}}(\epsilon_j \ z))$ and $\tilde \zeta_{j}(z):=\zeta_{j} (\exp_{\xi_{{l}_j}}(\epsilon_j \ z))$ for $z \in B(0, R/\epsilon_j)$.

Note that 
$$ \int_{B(0,R)} s_g \epsilon_j^2 \tilde u_{l_j} \varphi |g_{\xi_{{l}_j}}(\epsilon_{j} z)|^{1/2} dz\leq c \ \epsilon_j^2  \int_{B(0,R)} \tilde u_{l_j} \varphi |g_{\xi_{{l}_j}}(\epsilon_{j} z)|^{1/2} dz $$
$$\leq c \ \epsilon_j^2    \left(\int_{B(0,R)} \tilde u_{l_j}^2  dz\right)^{1/2}  \left(\int_{B(0,R)} \varphi^2 |g_{\xi_{{l}_j}}(\epsilon_{j} z)| dz\right)^{1/2} \leq c \  \epsilon_j^2    ||\tilde u_{l_j}|||_{H^1(\re^n)} c_2=o(\epsilon_{j}).$$
\noindent with $c$ an upper bound for $s_g$, $c_2^2$ an upper bound for $\int_{B(0,R)} \varphi^2 |g_{\xi_{{l}_j}(\epsilon_{j} z)}| dz$. Recall also that $u_{l_j}$ is bounded independently of $j$ in $H^1(\re^n)$.

Hence, taking the limit as $\epsilon_j \rightarrow 0$, in (\ref{int})

\begin{equation}
\label{weaklysolves}
\int_{\re^n} \left( \sum_{s,t=1}^{n} \delta_{s,t}  \frac{\partial \tilde u_l}{\partial {z_s}}  \  \frac{\partial \varphi}{\partial {z_t}}    + \tilde u_{l} \varphi \right) dz = \int_{\re^n} f'\left( U(z)  \right) \tilde u_{l} \  \varphi \  dz,
\end{equation} 
 \noindent since $\tilde \psi_j, \tilde \zeta_j \rightarrow 0$ strongly in $H^1(\re^n)$. Eq. (\ref{weaklysolves}) for each $\varphi \in C_0^{\infty}(\re^n)$, implies the claim that $\tilde u_l$ solves weakly eq. $(\ref{u tilde})$ in $\re^n$.

We now claim that for any $h \in \{1,2,...n\}$, $\tilde u_l$ satisfies also
\begin{equation}
\label{productu}
\int_{\mathbb{R}^{n}}\left(\nabla\psi^{h}\nabla\tilde{u_l}+\psi^{h}\tilde{u_l}\right) \ dz=0.
\end{equation}

 To prove (\ref{productu}) we compute
\begin{equation}\label{lange}
|\langle Z_{\epsilon_{j},\xi_{l_j}}^{h},\ u_{j}\rangle _{\epsilon_{j}}|=| \langle Z_{\epsilon_{j},\xi_{l_j}}^{h},\phi_{j} -\psi_j-\zeta_j \rangle_{\epsilon_{j}}|=|
\langle Z_{\epsilon_{j},\xi_{l_j}}^{h},\ \zeta_{j}\rangle_{\epsilon_{j}}|
\leq\Vert Z_{\epsilon_{j},\xi_{l_j}}^{h}\Vert_{\epsilon_{j}} \ \ \Vert\zeta_{j}\Vert_{\epsilon_{j}}=o(1), 
\end{equation}

\noindent since $\phi_{j}, \psi_{j}\in K_{\epsilon_{j},\bar \xi_{j}}^{\perp}$ and eq. (\ref{zeta_j}). On the other hand, we have

$$
\langle Z_{\epsilon_{j},\xi_{l_j}}^{h},\ u_{j}\rangle_{\epsilon_{j}}=\frac{1}{\epsilon_{j}^{n}}\int_{M}
[\epsilon_{j}^{2}\nabla_{g}Z_{\epsilon_{j},\xi_{l_j}}^{h}\nabla_{g}u_{j}+ (\ep_j^2 {\bf c}s_g +1)
Z_{\epsilon_{j},\xi_{l_j}}^{h}u_{j}] \ d\mu_{g}.
$$
Of course, by H\"{o}lder's inequality and eq. (23): 

$$\left|  \frac{1}{\epsilon_{j}^{n}} \int_{M} \epsilon_j^2 {\bf c} s_g Z_{\epsilon_{j},\xi_{l_j}}^{h}u_{j} \ d\mu_{g}\right| \leq c \  \epsilon_j^2 \left|  \frac{1}{\epsilon_{j}^{n}} \int_{M}  Z_{\epsilon_{j},\xi_{l_j}}^{h}u_{j} \ d\mu_{g}\right|$$

$$\leq c \  \epsilon_j^2\left(\displaystyle \frac{1}{\epsilon_{j}^{n}}\int_{M}(Z_{\epsilon_{j},\xi_{l_j}}^{h})^2 \ d\mu_{g}\right)^{\frac{1}{2}} \left(\displaystyle \frac{1}{\epsilon_{j}^{n}}\int_{M}(u_{j})^2\ d\mu_{g}\right)^{\frac{1}{2}}$$
$$
\leq c \  \epsilon_j^2  \left( \int_{B(0,r/\epsilon_{j})}
\left(\psi^{h}(z) \ \chi_{r}(\epsilon_{j}z)\right)^{2}  |g_{\xi_{{l}_j}}(\epsilon_{j} z)|^{1/2} dz\right)^{\frac{1}{2}} \left( \int_{M} u_{j}^2\ d\mu_{g}\right)^{\frac{1}{2}}
$$

$$\leq c \  \epsilon_j^2  \ \left( \int_{\re^n}|\nabla U|^2 dz + o(1)\right)^{1/2} \ ||u_{j}||_{\epsilon_j} = o(\epsilon_{j}),$$

\noindent   since $\psi^h (z)= \frac{\partial U}{\partial {z_h}} (z)$, and $\Vert u_{j}\Vert_{\epsilon_{j}}\rightarrow 1$ as $j\rightarrow\infty$. Then 

$$\langle Z_{\epsilon_{j},\xi_{l_j}}^{h},\ u_{j}\rangle_{\epsilon_{j}}=\frac{1}{\epsilon_{j}^{n}}\int_{M}
[ \ \epsilon_{j}^{2}\nabla_{g}Z_{\epsilon_{j},\xi_{l_j}}^{h}\nabla_{g}u_{j} + 
Z_{\epsilon_{j},\xi_{l_j}}^{h}u_{j} \ ] \ d\mu_{g}+o(\epsilon_{j})
$$
$$
= \int_{B(0,r/\epsilon_{j})}[\sum_{s,t=1}^{n}g_{\xi_{l_j}}^{st}(\epsilon_{j}z)\frac{\partial}{\partial z_{s}}
\left(\psi^{h}(z)\chi_{r}(\epsilon_{j}z)\right)
\frac{\partial}{\partial z_{t}}\left(\tilde{u}_{l_j}(z)\right)$$
$$
+ \ \psi^{h}(z)\chi_{r}(\epsilon_{j}z)\tilde{u}_{l_j}(z) \  ]\  |g_{\xi_{l_j}}(\epsilon_{j}z)|^{\frac{1}{2}} \ dz + o(\epsilon_{j})
$$

\begin{equation}\label{tilde}
=\displaystyle \int_{\mathbb{R}^{n}}\left(\nabla\psi^{h}\nabla\tilde{u}+\psi^{h}\tilde{u}\right) \ dz+o(1).  
\end{equation}
From $(\ref{lange})$ and $(\ref{tilde})$ we prove the claim of eq. (\ref{productu}).

Therefore, by $(\ref{u tilde})$ and $(\ref{productu})$ it follows that $\tilde{u}=0$. 

We now prove eq. (\ref{zero}). We will estimate	$\frac{1}{\epsilon_j^n} \int_{M} f'(V_{\epsilon_j,\bar \xi_j}) u_j^2 \  d\mu_{g}$ by partitioning $M$ in various subsets. First we will make estimates in small neighborhoods around each $\xi_{{l_j}}$, $l\in \{1, 2, ..., K\}$, using the fact that $\tilde u_l=0$. Then we will make estimates in the complement of these neighborhoods using  the hypothesis that 	$$ \sum_{i,k= 1,i\neq k}^{K} U\left(\frac{\exp_{\xi_{i_j}}^{-1}\xi_{k_j}}{\epsilon_j}\right)<\epsilon_j^2.$$

Let $ R_j=\frac{1}{2} \min\{d_g\left(\xi_{{l}_j},\xi_{{m}_j}\right), l\neq m\} $. Let $\tilde M=\bigcup\limits_{l=1}^K B_g(\xi_{l_j}, R_j) $. Then

\begin{equation}
\label{all}
\frac{1}{\epsilon_{j}^{n}}\int_{M}f'(V_{\epsilon_{j}, \bar\xi_{j}})u_{j}^{2} d\mu_{g}=\frac{1}{\epsilon_{j}^{n}}\sum^K_{l=1}\int_{B_g(\xi_{{l_j}}, R_j)}f'(V_{\epsilon_{j}, \bar\xi_{j}})u_{j}^{2} d\mu_{g}
+\frac{1}{\epsilon_{j}^{n}}\int_{M\setminus \tilde M} f'(V_{\epsilon_{j}, \bar\xi_{j}})u_{j}^{2} d\mu_{g}.
\end{equation}

Now, on one hand, for each $l$, since $\tilde u_l=0$,
$$\frac{1}{\epsilon_{j}^{n}} \int_{B_g(\xi_{{l_j}},  R_j)}f'(V_{\epsilon_{j}, \bar\xi_{j}})u_{j}^{2} d\mu_{g}$$
$$=\int_{{B(0, \ \epsilon_j  R_{j})}}  f'\left(U(z) \chi_{r}(\epsilon_{j} z) + \sum_{i\neq l}^K U\left(\frac{\exp^{-1}_{\xi_{{l}_j}} \exp_{\xi_{{i}_j}}(\epsilon_{j} z)}{\epsilon_j}\right) \chi_r \left(\exp^{-1}_{\xi_{{l}_j}} \exp_{\xi_{{i}_j}}(\epsilon_{j} z)\right)  \right)$$
$$\tilde u_{l_j}^2(z) \  |g_{\xi_{{l}_j}}(\epsilon_{j} z)|^{1/2} dz$$

\begin{equation}
\label{neighborhoods}
= o(1).
\end{equation}

On the other hand, by H\"older's inequality 
$$\frac{1}{\epsilon_{j}^{n}}\int_{M\setminus \tilde M} f'(V_{\epsilon_{j}, \bar\xi_{j}})u_{j}^{2} d\mu_{g}$$
$$\leq \left(\frac{1}{\epsilon_{j}^{n}}\int_{M\setminus \tilde M} \left(f'(V_{\epsilon_{j}, \bar\xi_{j}})\right)^{n/2} d\mu_{g}\right)^{2/n} \left(\frac{1}{\epsilon_{j}^{n}}\int_{M\setminus \tilde M}u_{j}^{\frac{2n}{n-2}} d\mu_{g}\right)^{\frac{n-2}{n}}$$
$$\leq c_1 \left(\frac{1}{\epsilon_{j}^{n}}\int_{M\setminus \tilde M} \left((p-1)\sum^K_{l=1} W_{\epsilon_{j}, \xi_{l_j}}^{(p-2)}\right)^{\frac{n}{2}} d\mu_{g}\right)^{2/n} \  ||u_j||^2_{\epsilon_{j}} $$

$$\leq c_2 \left(\frac{1}{\epsilon_{j}^{n}}\int_{M \setminus \tilde M}  \left(  \sum^K_{l=1}U^{(p-2)} \left( \frac{\exp_{\xi_{l_j}}^{-1}(x)}{\epsilon_j}\right) \chi_{r}^{(p-2)} \left( \frac{\exp_{\xi_{l_j}}^{-1}(x)}{\epsilon_j}\right) \right)^{ \frac{n}{2}} d\mu_{g}\right)^{2/n}  $$

$$\leq c_2 \frac{1}{\epsilon_{j}^{2}}\sum^K_{l=1}\left(\int_{B_g(\xi_{{l}_j},r) \setminus \tilde M} U^{\frac{(p-2)n}{2}} \left( \frac{\exp_{\xi_{l_j}}^{-1}(x)}{\epsilon_j}\right)d\mu_{g}\right)^{2/n} \  $$

\begin{equation}
\label{complement}
\leq c_2 \frac{1}{\epsilon_j^2 }e^{-(p-2)\frac{R_j}{ \epsilon_{j}}} \sum^K_{l=1}\left(\int_{B_g(\xi_{{l}_j},r) \setminus \tilde M} d\mu_{g}\right)^{2/n}\leq c_3 \frac{1}{\epsilon_j^2 }e^{-(p-2)\frac{R_j}{ \epsilon_{j}}} = o(1).
\end{equation}

Eqs. (\ref{all}),  (\ref{neighborhoods}) and  (\ref{complement}) prove (\ref{zero}), which contradicts (\ref{one}) .

\end{proof} 

Next we  study an estimate for the term $ R_{\epsilon, \bar \xi}=\Pi^{\perp}_{\epsilon,\bar \xi}\{ i^*_{\epsilon}[f(V_{\epsilon,\bar \xi})]-V_{\epsilon,\bar \xi} \}$.

\begin{lem}
	\label{estimateR}
 There exist $\rho_0>0$, $\ep_0>0$, $c>0$ and  $\sigma>0$ such that for any $\rho\in(0,\rho_0)$, $\ep\in(0,\ep_0)$
 and $\bar \xi\in D_{\epsilon,\rho}^{{k_0}}$, it holds 
 \begin{equation}
 \Vert R_{\epsilon, \bar \xi}\Vert_\ep \leq c \Big(\ep^2 + \sum_{i\neq j}e^{-\frac{1+\sigma}{2}\frac{d_g(\xi_i,\xi_j)}{\ep}}\Big).
 \end{equation}
\end{lem}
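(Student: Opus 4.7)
The plan is to drop the projection (which is norm-decreasing) and use the adjoint $i^*_\epsilon$ to reduce the estimate to an $L^{p'}_\epsilon$-bound, then split the remainder into a ``self-interaction'' part of size $O(\epsilon^2)$ and a ``cross-interaction'' part that decays exponentially in $d_g(\xi_i,\xi_j)/\epsilon$.

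First I set $Y_{\epsilon,\xi_i}:=-\epsilon^2\Delta_g W_{\epsilon,\xi_i}+(\epsilon^2{\bf c}s_g+1)W_{\epsilon,\xi_i}$, so by (\ref{Adjoint}), $W_{\epsilon,\xi_i}=i^*_\epsilon(Y_{\epsilon,\xi_i})$, hence $V_{\epsilon,\bar\xi}=i^*_\epsilon\bigl(\sum_iY_{\epsilon,\xi_i}\bigr)$. Using the continuity bound (\ref{norma_ep}) and a triangle inequality,
\[
\|R_{\epsilon,\bar\xi}\|_\epsilon\le c\Bigl|f(V_{\epsilon,\bar\xi})-\sum_if(W_{\epsilon,\xi_i})\Bigr|_{p',\epsilon}+c\sum_i\bigl|f(W_{\epsilon,\xi_i})-Y_{\epsilon,\xi_i}\bigr|_{p',\epsilon}.
\]
Each term in the second sum is $O(\epsilon^2)$ by the very computation carried out in the proof of Lemma~2.2, and so contributes the $c\epsilon^2$ appearing in the conclusion.

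The heart of the argument is the cross-interaction term. Using the elementary inequality (valid for nonnegative $a_i$ and $p>2$)
\[
\Bigl|\Bigl(\sum_i a_i\Bigr)^{p-1}-\sum_ia_i^{p-1}\Bigr|\le C\sum_{i\neq j}a_i^\alpha a_j^\beta,
\]
with $(\alpha,\beta)=\bigl((p-1)/2,(p-1)/2\bigr)$ when $2<p\le 3$ and $(\alpha,\beta)=(p-2,1)$ when $p\ge 3$, the task reduces to bounding $|W_{\epsilon,\xi_i}^\alpha W_{\epsilon,\xi_j}^\beta|_{p',\epsilon}$. In every case $\min(\alpha,\beta)>1/2$ strictly, because $p>2$. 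Passing to normal coordinates around $\xi_i$ via $x=\exp_{\xi_i}(\epsilon z)$ and writing $w(z):=\epsilon^{-1}\exp_{\xi_j}^{-1}\exp_{\xi_i}(\epsilon z)$, one has $|w(0)|=d_g(\xi_i,\xi_j)/\epsilon$ and $w(z)-w(0)=Tz+O(\epsilon|z|^2)$ with $T$ close to the identity; this reduces $\int(W_{\epsilon,\xi_i}^\alpha W_{\epsilon,\xi_j}^\beta)^{p'}\,d\mu_g$ to a convolution-type integral of $U^{\alpha p'}(z)U^{\beta p'}(w(z))$ over $\mathbb R^n$. From the sharp decay (\ref{ED1}) one gets $U(y)\le C_\delta e^{-(1-\delta)|y|}$ for every $y\in\mathbb R^n$ and any $\delta>0$, and tracking the minimum of $\alpha|z|+\beta|w(z)|$ along the segment from $0$ to $w(0)$ — where this minimum equals $\min(\alpha,\beta)\,d_g(\xi_i,\xi_j)/\epsilon$ — yields, up to a polynomial prefactor in $d_g(\xi_i,\xi_j)/\epsilon$, the bound $|W_{\epsilon,\xi_i}^\alpha W_{\epsilon,\xi_j}^\beta|_{p',\epsilon}\le C e^{-(1-\delta)\min(\alpha,\beta)d_g(\xi_i,\xi_j)/\epsilon}$. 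Absorbing the polynomial factor into a slightly smaller exponential rate and using $\min(\alpha,\beta)>1/2$, the right-hand side is dominated by $c\,e^{-(1+\sigma)d_g(\xi_i,\xi_j)/(2\epsilon)}$ for $\sigma>0$ chosen small enough.

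The main obstacle will be achieving the exponent $(1+\sigma)/2>1/2$ in the conclusion: a crude constant pointwise bound $W_iW_j\le Ce^{-(1-\delta)d/\epsilon}$ combined with the finiteness of $\mathrm{Vol}(M)$ would only yield the rate $\min(\alpha,\beta)/2\cdot d/\epsilon$, which is insufficient when $p$ is close to $2$. Instead one must exploit the directional structure of the integrand along the geodesic segment connecting the two peaks, and must choose the algebraic inequality case-appropriately in $p$ (symmetric form for $p\le 3$, asymmetric form for $p\ge 3$). Beyond this, the remaining computations follow the standard interaction-type estimates of \cite{Dancer, Micheletti}.
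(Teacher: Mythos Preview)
Your proposal is correct and follows essentially the same approach as the paper: reduce to an $L^{p'}_\epsilon$ bound via $i^*_\epsilon$ and (\ref{norma_ep}), then split into a self-interaction piece handled by the computation behind Lemma~2.2 and a cross-interaction piece controlled by an exponential decay estimate. The only difference is one of emphasis --- the paper simply cites Lemma~3.3 of \cite{Dancer} for the cross term, while you sketch the underlying convolution-type bound (with the correct case split in $p$ ensuring $\min(\alpha,\beta)>1/2$); conversely, the paper re-expands the self-interaction computation whereas you cite Lemma~2.2.
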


\begin{dem}
	
	Let $Y_{\epsilon ,\xi} = \epsilon^{2}\triangle_{g}W_{\epsilon ,\xi} +(\ep^2 {\bf c}s_g +1)W_{\epsilon ,\xi}$, so that by (\ref{Adjoint}): 
	$W_{\epsilon ,\xi} = \io_{\epsilon}^{*}(Y_{\epsilon ,\xi} )$. 
 Hence,
if  $\ Y_{\epsilon,\overline{\xi}}:=\sum_{i=1}^{k_0}Y_{\epsilon,\xi_{i}},$ we have
 
 \begin{equation}
  -\ep^2\Delta_g V_{\ep,\overline{\xi}} + (1+ \epsilon^2 {\bf c}s_g) V_{\ep,\overline{\xi}} = Y_{\ep,\overline{\xi}} \hspace{0.5cm}\hbox{ on }M,
 \end{equation}


\noindent that is, 	$V_{\epsilon ,\bar \xi} = \io_{\epsilon}^{*}(Y_{\epsilon ,\bar \xi} )$. Then, using the estimate in  (\ref{norma_ep}): 
\begin{equation}
\begin{array}{lcc}
  \Vert R_{\epsilon, \bar \xi}\Vert_\ep = \Vert \io_\ep^*(f(V_{\ep, \bar\xi})) - V_{\ep, \bar\xi}) \Vert_\ep \leq C \mid f(V_{\ep, \bar\xi}) - Y_{\ep, \bar\xi}\mid_{p',\ep} \\
 \leq C \mid \Big(\sum_{i=1}^{k_0} W_{\ep, \xi_i}\Big)^{p-1} - \sum_{i=1}^{k_0} W_{\ep,\xi_i}^{p-1} \mid_{p',\ep} + 
 \mid \sum_{i=1}^{k_0} W_{\ep,\xi_i}^{p-1}  - Y_{\ep,\bar \xi_i}\mid_{p',\ep},  \\
 \end{array}
\end{equation} 
\noindent for some $C>0$. On one hand, by arguing as in Lemma $3.3$ in \cite{Dancer}, for some $\sigma>0$, we get
\begin{equation}\label{Orden grande}
  \mid \Big(\sum_{i=1}^{k_0} W_{\ep, \xi_i}\Big)^{p-1} - \sum_{i=1}^{k_0} W_{\ep,\xi_i}^{p-1} \mid_{p',\ep}
  =o\Big(\sum_{i\neq j}e^{-\frac{1+\sigma}{2}\frac{d_g(\xi_i,\xi_j)}{\ep}}\Big)
\end{equation}
\noindent On the other hand,


$$\mid \sum_{i=1}^{k_0} W_{\ep,\xi_i}^{p-1}  - Y_{\ep,\bar \xi_i}\mid_{p',\ep}=\mid \sum_{i=1}^{k_0} \left(W_{\ep,\xi_i}^{p-1}  - Y_{\ep, \xi_i}\right)\mid_{p',\ep}  $$
\begin{equation}\label{orden ep2}
\leq \sum_{i=1}^{k_0} \mid  W_{\ep,\xi_i}^{p-1}  - Y_{\ep, \xi_i}\mid_{p',\ep}  
\end{equation}

Let $\tilde Y_{\ep, \xi}(z)= Y_{\ep, \xi}(\exp_{\xi} (z))$ for $z \in B(0,r)$, then
$$\tilde Y_{\ep, \xi}= -\ep^2\Delta_g W_{\ep,{\xi}} + (1+ \epsilon^2 {\bf c}s_g) W_{\ep,{\xi}}= -\ep^2\Delta_g (U_{\epsilon} \chi_r) + (1+ \epsilon^2 {\bf c}s_g) U_{\epsilon} \chi_r $$

$$=-\ep^2 \ \chi_r\  \Delta U_{\epsilon} +  U_{\epsilon} \chi_r  + \epsilon^2 {\bf c}s_g U_{\epsilon} \chi_r -\ep^2 U_{\epsilon} \Delta \chi_r -2 \epsilon^2 \langle \nabla U_{\epsilon}, \nabla \chi_r\rangle $$
$$+\epsilon^2(g_{\xi}^{ij}-\delta_{i,j}) \partial_{ij}(U_{\epsilon} \chi_r)- \epsilon^2 g_{\xi}^{ij} \Gamma^k_{ij} \partial_{k}(U_{\epsilon} \chi_r)$$
$$=\left(U_{\epsilon} ^{p-1} \chi_r -\ep^2 U_{\epsilon}  \Delta \ \chi_r -2 \epsilon^2 \langle \nabla U_{\epsilon}, \nabla \chi_r\rangle +\epsilon^2(g_{\xi}^{ij}-\delta_{i,j}) \partial_{ij}(U_{\epsilon} \chi_r)- \epsilon^2 g_{\xi}^{ij} \Gamma^k_{ij} \partial_{k}(U_{\epsilon} \chi_r)\right) $$
$$+ \left(\epsilon^2 {\bf c} s_g U_{\epsilon} \chi_r\right)$$

Then 
$$ \left(\frac{1}{\epsilon^n}\int_M ( W_{\ep,\xi}^{p-1}  - Y_{\ep, \xi})^{p'} d\mu_g\right)^{\frac{1}{p'}}\ =\left(\frac{1}{\epsilon^n}\int_{B_{(0,r)}} \left( (U_{\epsilon} (z)\chi_r (z))^{p-1}  - \tilde Y_{\ep, \xi(z)}\right)^{p'} \ |g_{\xi}(z)|\  dz \right)^{\frac{1}{p'}}  $$
$$\leq c \left( \frac{1}{\epsilon^n} \int_{B_{(0,r)}}\left(U_{\epsilon} ^{p-1} ( \chi_r^{p-1}-\chi_r )\right)^{p'}  dz\right)^{\frac{1}{p'}} 
+c \ep^2 \left( \frac{1}{\epsilon^n} \int_{B_{(0,r)}}( U_{\epsilon}  \Delta \ \chi_r)^{p'}dz \right)^{\frac{1}{p'}}$$
$$ +c \epsilon^2  \left( \frac{1}{\epsilon^n} \int_{B_{(0,r)}}\left(\langle \nabla U_{\epsilon}, \nabla \chi_r\rangle\right)^{p'} dz \right)^{\frac{1}{p'}}$$ 

$$+c \epsilon^2 \left( \frac{1}{\epsilon^n} \int_{B_{(0,r)}}\left((g_{\xi}^{ij}-\delta_{i,j}) \partial_{ij}(U_{\epsilon} \chi_r) \right)^{p'} dz \right)^{\frac{1}{p'}}+c \epsilon^2 \left( \frac{1}{\epsilon^n} \int_{B_{(0,r)}} \left(  g_{\xi}^{ij} \Gamma^k_{ij} \partial_{k}(U_{\epsilon} \chi_r)\right)^{p'}  dz\right)^{\frac{1}{p'}}$$

$$+c \epsilon^2  \left(\frac{1}{\epsilon^n} \int_{B_{(0,r)}}\left(  s_g U_{\epsilon} \chi_r\right)^{p'} dz \right)^{\frac{1}{p'}}$$

\noindent by  Lemma 3.3 in \cite{Micheletti}, the first five terms in the last inequality are $o(\epsilon^2)$. Meanwhile, for the last term we have:

$$ \left(\frac{1}{\epsilon^n} \int_{B_{(0,r)}}\left( \epsilon^2 s_g U_{\epsilon} \chi_r\right)^{p'} dz \right)^{\frac{1}{p'}}
\leq c_1 \ \epsilon^2 \left(\frac{1}{\epsilon^n} \int_{B_{(0,r)}}   U_{\epsilon}^{p'} \chi_r^{p'} dz \right)^{\frac{1}{p'}}$$
$$\leq c_1 \ \epsilon^2 \left( \int_{B_{(0,\frac{r}{\epsilon})}}   U^{p'}  dz \right)^{\frac{1}{p'}}\leq c_2 \  \epsilon^2.$$

Thus, eq. (\ref{orden ep2}) turns into 

\begin{equation}\label{orden ep3}
\mid \sum_{i=1}^{k_0} W_{\ep,\xi_i}^{p-1}  - Y_{\ep,\bar \xi_i}\mid_{p',\ep}\leq \sum_{i=1}^{k_0} \mid  W_{\ep,\xi_i}^{p-1}  - Y_{\ep, \xi_i}\mid_{p',\ep}  \leq c \ \ep^2.
\end{equation}
\noindent for some $c>0$. Eqs. (\ref{Orden grande}) and (\ref{orden ep3}) imply the estimate of the Lemma.

\end{dem}  

\end{proof} 

 As stated above, in order to solve eq. $(\ref{Pi})$ we need to find a fixed point for the operator 
 $T_{\ep,\overline{\xi}}:K_{\epsilon,\overline{\xi}}^{\perp} \to K_{\epsilon,\overline{\xi}}^{\perp}$
 defined by
 \[
  T_{\ep,\overline{\xi}}(\phi)=L_{\ep,\overline{\xi}}^{-1}(N_{\ep,\overline{\xi}}(\phi)+R_{\ep,\overline{\xi}}).
 \]
By Lemma \ref{L_inversible} we have

\begin{equation}
\label{RN}
 \Vert T_{\ep,\overline{\xi}}(\phi) \Vert_\ep \leq c\Big(\Vert N_{\ep,\overline{\xi}}(\phi)\Vert_\ep +\Vert R_{\ep,\overline{\xi}}\Vert_\ep \Big)
 \end{equation}
and 
 \[
 \Vert T_{\ep,\overline{\xi}}(\phi_1) -  T_{\ep,\overline{\xi}}(\phi_2)\Vert_\ep \leq c\Big(\Vert N_{\ep,\overline{\xi}}(\phi_1)\Vert_\ep - \Vert N_{\ep,\overline{\xi}}(\phi_2)\Vert_\ep \Big).
 \]
 By $(\ref{norma_q})$ and $(\ref{norma_ep})$, it holds
 
\[
 \Vert N_{\ep,\overline{\xi}}(\phi)\Vert_\ep \leq C \mid f(V_{\ep,\overline{\xi}}+\phi)-f(V_{\ep,\overline{\xi}})-f'(V_{\ep,\overline{\xi}})\phi \mid_{p',\ep}.
\]
And by the mean value Theorem, there is some $\tau\in(0,1)$ such that,  if $||\phi_1||_{\epsilon}$ and  $||\phi_2||_{\epsilon}$ are small enough,
 
\begin{equation}
\begin{array}{lcc}
    \mid 
 f(V_{\ep,\overline{\xi}}+\phi_1)-f(V_{\ep,\overline{\xi}}+\phi_2)-f'(V_{\ep,\overline{\xi}})(\phi_1-\phi_2) \mid_{p',\ep} \\
 \leq C \mid 
 (f'(V_{\ep,\overline{\xi}}+\phi_2 + \tau(\phi_1-\phi_2))-f'(V_{\ep,\overline{\xi}}))(\phi_1-\phi_2) \mid_{p',\ep} \\
  \leq C \mid 
 f'(V_{\ep,\overline{\xi}}+\phi_2 + \tau(\phi_1-\phi_2))-f'(V_{\ep,\overline{\xi}})\mid_{\frac{p}{p-2},\ep}\mid \phi_1-\phi_2 \mid_{p',\ep} \\
\end{array}
\end{equation}

It follows from \cite{Dancer}, section 3, that 

\begin{equation}
\begin{array}{lcc}
\mid 
 
f'(V_{\ep,\overline{\xi}}+\phi_2 + \tau(\phi_1-\phi_2))-f'(V_{\ep,\overline{\xi}})\mid_{\frac{p}{p-2},\ep}\mid \phi_1-\phi_2 \mid_{p',\ep} \\
 \leq C \Vert \phi_1 -\phi_2\Vert_\ep.\\
\end{array}
\end{equation}

\noindent And then we have

\begin{equation}
\label{fixed}
\begin{array}{lcc}
\Vert T_{\ep,\overline{\xi}}(\phi_1) -  T_{\ep,\overline{\xi}}(\phi_2)\Vert_\ep 
\leq \Vert N_{\ep,\overline{\xi}}(\phi_1)- N_{\ep,\overline{\xi}}(\phi_2)\Vert_\ep \leq c \Vert \phi_1 -\phi_2\Vert_\ep\\
\end{array}
\end{equation}
for $c \in (0,1)$, provided $||\phi_1||_{\epsilon}$ and  $||\phi_2||_{\epsilon}$ are small enough.

Hence $T_{\epsilon,\bar \xi}$ has a fixed point in a small enough ball in $K^{\perp}_{\epsilon,\bar \xi}$, centered at 0. 

Moreover, for such fixed point, we have  by eq. (\ref{RN}),

$$ ||\phi_{\epsilon,\bar \xi}||_{\epsilon}= \Vert T_{\ep,\overline{\xi}}(\phi) \Vert_\ep \leq c\Big(\Vert N_{\ep,\overline{\xi}}(\phi)\Vert_\ep +\Vert R_{\ep,\overline{\xi}}\Vert_\ep \Big).$$

On the other hand
\begin{equation}
\label{Nphi}
||N_{\ep,\overline{\xi}}(\phi)||_{\epsilon}\leq c||\phi||_{\epsilon},
\end{equation}

for $\phi$ with $||\phi||_{\epsilon}$ small enough, since 

$$||N_{\ep,\overline{\xi}}(\phi)||_{\epsilon}\leq c\left(||\phi||_{\epsilon}^{p-1}+||\phi||_{\epsilon}^2\right),$$

\noindent by eq (3.35) in \cite{Dancer}. 

Hence by Lemma \ref{estimateR}, and inequality (\ref{Nphi}),

$$ ||\phi_{\epsilon,\bar \xi}||_{\epsilon} \leq c\Big(\Vert N_{\ep,\overline{\xi}}(\phi)\Vert_\ep +\Vert R_{\ep,\overline{\xi}}\Vert_\ep \Big)\leq  c_1||\phi_{\epsilon,\bar \xi}||_{\epsilon} +c_2   \Big(\ep^2 + \sum_{i\neq j}e^{-\frac{1+\sigma}{2}\frac{d_g(\xi_i,\xi_j)}{\ep}}\Big)$$

This implies the estimate of the Lemma:

$$ ||\phi_{\epsilon,\bar \xi}||_{\epsilon} \leq c_3   \Big(\ep^2 + \sum_{i\neq j}e^{-\frac{1+\sigma}{2}\frac{d_g(\xi_i,\xi_j)}{\ep}}\Big).$$

Finally, to prove that the map $\xi \rightarrow \phi_{\epsilon,\bar \xi}$ is in fact a $C^1$ map, given $\epsilon$, we use the Implicit Function Theorem applied to the function
$$F(\bar \xi, \phi)= T_{\ep,\overline{\xi}}(\phi)- \phi.$$

As stated above, eq. (\ref{fixed}) guarantees that there is some $\phi_{\epsilon,\bar \xi} $, such that $F(\bar \xi, \phi_{\epsilon,\bar \xi})=0$. Also, $T_{\ep,\overline{\xi}}(\phi)$ is  differentiable, with  differentiable inverse $L_{\ep,\overline{\xi}}(\phi)$. The Implicit Function Theorem then implies that $\xi \rightarrow \phi_{\epsilon,\bar \xi}$ is  a $C^1$ map.

 \hfill$\square$

\section{Appendix}

In this section we compute numerically  $\beta$ of section 3, for low dimensions. Namely
\begin{equation}
\label{betaapp}
\beta:= {\bf c} \displaystyle\int_{\mathbb{R}^n} U^2(z)  \ dz  -  \frac{1}{n(n+2)}\displaystyle\int_{\mathbb{R}^n} |\nabla U(z)|^2  |z|^2 \ dz,
\end{equation}
\noindent which plays an important role in the asymptotic expansion of the energy $\overline{J_{\epsilon}}$.

\begin{table}
	\centering
	\begin{tabular}{|c|c|c|c|c|} 
		\hline
		&&&&\\
		m & n &  $V_{n-1}\ {\bf c} \int_{0}^{\infty} u^2 r^{n-1}dr$     & $\frac{V_{n-1}}{n(n+2)}\int_{0}^{\infty} u'^2 r^{n+1} dr $   & $\beta$  \\[5pt]
		\hline
		\hline
		2 & 2 & 1.9502 & 2.331  & -0.38089 \\
		2 & 3 & 11.959 & 13.259 & -1.2999  \\
		2 & 4 & 81.771 & 87.5   & -5.7285  \\
		2 & 5 & 617.47 & 647.82 & -30.353  \\
		2 & 6 & 5083.3 & 5268.8 & -185.5   \\
		2 & 7 & 45119  & 46391  & -1272.4  \\
		3 & 2 & 3.9303 & 4.4149 & -0.48461 \\
		3 & 3 & 26.196 & 28.329 & -2.1329  \\
		3 & 4 & 194.26 & 205.59 & -11.324  \\
		3 & 5 & 1577.6 & 1647.1 & -69.453  \\
		3 & 6 & 13854  & 14332  & -478.38  \\
		4 & 2 & 6.2006 & 6.7579 & -0.55731 \\
		4 & 3 & 45.28  & 48.231 & -2.9513  \\
		4 & 4 & 363.46 & 381.54 & -18.085  \\
		4 & 5 & 3162.7 & 3287.2 & -124.58  \\
		5 & 2 & 8.6442 & 9.2554 & -0.61113 \\
		5 & 3 & 68.674 & 72.419 & -3.7455  \\
		5 & 4 & 592.7  & 618.4  & -25.692  \\
		6 & 2 & 11.199 & 11.851 & -0.65243 \\
		6 & 3 & 95.938 & 100.42 & -4.4788     \\
		\hline      
	\end{tabular}
	\vspace{5pt}
	\caption{Numerical values for $\beta$, for $n+m \leq9$.}
	\label{numericalbetas}
\end{table}

$\beta$ is a dimensional constant that requires knowledge  of  the unique (up to translations) positive solution  $U \in H^1(\mathbb{R}^n)$ that vanishes at infinity of 
\begin{equation}
-\Delta U + U=U^{p-1} \hbox{ in } \mathbb{R}^n,
\end{equation}
\noindent with $p=\frac{2(m+n)}{m+n-2}$. The solution is known to exist, and to be unique and radial, see \cite{Kwong} for details.

Hence, we consider the solution $h=h_{\alpha}$ of

\begin{equation}
\label{hache}
h''(t)+\frac{n-1}{t}h'(t)-h(t)+h(t)^{\frac{m+n+2}{m+n-2}}=0.
\end{equation}

\noindent with $h(0)=\alpha>0$, $h'(0)=0$.
By the aforementioned existence and uniqueness results, there exists only one value
$\alpha=\alpha_0=\alpha_0(m,n)$ that gives a positive solution $h_{\alpha_0}$ that vanishes at infinity. Our approach is to find  $h_{\alpha_0}$ numerically as the solution of (\ref{hache}) that vanishes at infinity, and then to integrate it numerically to find 
$V_{n-1} \ {\bf c}  \int_{0}^{\infty} u^2 r^{n-1}dr$    and $\frac{V_{n-1}}{n(n+2)} \int_{0}^{\infty} u'^2 r^{n+1} dr $, the two terms involved in (\ref{betaapp}).
Of course $u(r)=h_{\alpha_0}(r)$.

In Table \ref{numericalbetas} we show the  numerical results, where $\beta$ is negative for  $m+n\leq9$.

\end{document}